\documentclass[12pt,twoside, reqno]{amsart}
\usepackage{hyperref}
\hypersetup{
    colorlinks=true,
    linkcolor=blue,
    filecolor=magenta,      
    urlcolor=cyan,
    pdftitle={Initial Data Identification for Conservation Laws with Spatially Discontinuous Flux},
    pdfpagemode=FullScreen,
    }
\usepackage{enumitem}
\usepackage{tikz}
\usepackage{comment}
\usepackage{amsmath}
\usepackage{bbm}
\usepackage{mathrsfs}
\usepackage{stmaryrd}
\usepackage{subcaption}
\usepackage{amssymb}
\usepackage{mathrsfs}
\usepackage{appendix}
\usepackage{soul}
\usepackage{dirtytalk}
\newtheorem{thm}{Theorem}[section]
\newtheorem{prop}[thm]{Proposition}

\newtheorem{lemma}[thm]{Lemma}
\theoremstyle{definition}
\newtheorem{exmp}[thm]{Example}
\newtheorem{defi}[thm]{Definition}
\theoremstyle{remark}
\newtheorem{remark}[thm]{Remark}
\usepackage[margin=1in]{geometry}
\numberwithin{equation}{section}

\newcommand*\dif{\mathop{}\!\mathrm{d}}

\newcommand{\R}{\mathbb R}
\newcommand{\mc}[1]{\mathcal{#1}}

\newcommand{\mr}[1]{\mathrm{#1}}
\newcommand{\bs}[1]{\boldsymbol{#1}}
\newcommand{\ms}[1]{\mathsf{#1}}
\newcommand{\msc}[1]{\mathscr{#1}}
\newcommand{\ol}[1]{\overline{#1}}

\newcommand{\sabp}{\mc S^{\, [A B]+}}
\newcommand{\sabpt}{\mc S_t^{\, [A B]+}}

\newcommand{\sabpT}{{\mc S}_{\it T}^{\, [{\it A B}\,]+}}
\newcommand{\sabmt}{\mc S_t^{\, [A B]-}}

\newcommand{\sobapt}{\overline{\mc S}_t^{\, [\overline B\, \overline A\,]+}}

\title[Conservation laws with discontinuous flux]{Initial data identification for conservation laws\\ with spatially discontinuous flux}

\author{
Fabio Ancona \and Luca Talamini
}
\thanks{Dipartimento di Matematica ``Tullio Levi-Civita", Universit\`a di Padova, Italy, ~~E-mail: ancona@math.unipd.it, \\ luca.talamini@math.unipd.it}

\begin{document}

\maketitle

\begin{abstract}
We consider a scalar conservation law  
with a spatially discontinuous flux at a single point $x=0$,
and we 
study the initial data identification problem for $AB$-entropy solutions 
associated to an interface connection $(A,B)$.
This problem consists in identifying the set of initial data 
driven by
the corresponding 
$AB$-entropy solution 
to a given target profile~$\omega^T$, at a time horizon $T>0$.
We provide a full characterization of such a set in terms of suitable integral inequalities,
and we establish structural and geometrical properties of this set.
A distinctive feature of the initial set is that it is in general not convex, differently 
from the case of conservation laws with  convex flux
independent on the space variable.
 The results rely on the properties
 of the $AB$-backward-forward evolution operator
 introduced in~\cite{talamini_ancona_attset},
 and on a  proper concept of $AB$-genuine/interface characteristic
for $AB$-entropy solutions provided in this paper.
\end{abstract}

\tableofcontents

\section{Introduction} 
We are concerned with the 
initial value problem for a 
scalar conservation law in one space dimension
\begin{align}
    \label{conslaw}
&u_t+f(x,u)_x=0,
\quad x \in \mathbb{R}, \quad t \geq 0,
\\
\noalign{\smallskip}
    \label{initdat}
    &u(x,0) = u_0(x), \qquad x \in \mathbb{R},
\end{align}
where $u = u(x,t)$ is the state variable, and the flux $f$ is a space discontinuous function of the form
\begin{equation}\label{discflux}
f(x,u) =\begin{cases}
f_l(u), \qquad x < 0, \\
f_r(u), \qquad x > 0\,,
\end{cases}
\end{equation}
with $f_l, f_r: \R\to \R$
 twice continuously differentiable, uniformly  convex maps that satisfy
\begin{equation}
\label{eq:flux-assumption-1}
f_l''(u),\ f_r''(u) \geq a >0\,,
\end{equation}
and (up to a reparametrization)
\begin{equation}
\label{eq:flux-assumption-2}
f_l(0) = f_r(0), \quad f_l(1) = f_r(1)\,.
\end{equation}
We assume also that 
the unique critical points 
$\theta_l$, $\theta_r$ of $f_l, f_r$, respectively,
satisfy
\begin{equation}
\label{eq:flux-assumption-3}
\theta_l \geq 0, \quad \theta_r \leq 1\,.
\end{equation}
It is well known that,
because of the nonlinearity of the equation,
in order to achieve 
global in time existence and uniqueness results for problems of this type one has to consider 
weak distributional solutions
that satisfy 
the classical Kru\v{z}kov entropy inequalities
away from the  flux-discontinuity interface
$x=0$,
augmented by an appropriate \textit{interface entropy condition}
at~$x=0$. 
Here,   we will consider \textit{entropy solutions of $AB$-type}, associated to a so-called \textit{interface connection} $(A,B)$  (cfr~\cite{Adimurthi2005,BKTengquist} and see~\S \ref{subsec:connAB}). Entropy solutions of $AB$-type form an $\mathbf L^1$-contractive semigroup 
on the domain of ${\bf L^\infty}$ initial data~\cite{BKTengquist, Garavellodiscflux}.
Thus, we adopt the semigroup notation 
$u(x,t)\doteq \sabpt u_0 (x)$, $t\geq 0$, $x\in\R$, for the unique $AB$-entropy solution of~\eqref{conslaw}-\eqref{initdat},
for every initial datum $u_0\in {\bf L}^{\infty}(\mathbb R)$.
\medskip

In this paper, we study the initial data identification problem (or inverse design problem) for $AB$-entropy solutions of the equation~\eqref{conslaw}. 
This problem consists in identifying the set of initial data for which the corresponding 
$AB$-entropy solution coincides with a given target profile $\omega^T$, at a time horizon $T>0$.
Observe that we cannot expect to reach 
any desired profile $\omega^T\in {\bf L}^{\infty}(\mathbb R)$.
In fact, even in the case where $f_l=f_r$,
since the work of Ole\u\i nik \cite{oleinik1957} it is well known that, 
because of the uniform convexity of the flux, 
the Kru\v{z}kov entropy conditions imply 
that every entropy weak solution $u$ of~\eqref{conslaw}
must satisfy (in the sense of distributions)
the one-sided Lipschitz estimate
\begin{equation}
    \label{eq:Oleinik}
\partial_x u(\cdot\,, t) \leq \frac{1}{at}, 
\qquad \text{in \ $\msc D^{\prime}$},
\quad \forall~t>0.
\end{equation}
Essentially, the nonlinearity of the flux forces characteristic lines to intersect which, together with the entropy condition, produces a regularizing effect
$L^{\infty}$ to $BV$ encoded in the Ole\u\i nik inequality~\eqref{eq:Oleinik}.
In the case  of equation~\eqref{conslaw} 
with discontinuous flux~\eqref{discflux} where $f_l\neq f_r$, we have shown in~\cite{anconachiri,talamini_ancona_attset} that
the set of reachable profiles at a time $T>0$:
\begin{equation}\label{eq:attsetintro}
    \mc A^{[AB]}(T) \doteq \big\{ \mc S_T^{[AB]+} u_0 \quad \big| \quad u_0 \in \mathbf L^{\infty}(\mathbb R) \big\},
\end{equation}
is characterized in terms of suitable Ole\v{\i}nik-type estimates and
unilateral pointwise constraints. 
Note that a 
``loss of information'' takes place when characteristic lines intersect 
into a shock:  there are infinitely many ways to create the same shock discontinuity at a given time $T$. Therefore the initial data identification problem for this type of equations is highly ill-posed:
multiple initial data can be stirred by~\eqref{conslaw} into the same
attainable profile $\omega^T\in \mc A^{[AB]}(T)$
at  time $T$.
\medskip

Our goal is to characterize and study the 
properties of 
the set 
of  initial data leading to a 
given profile $\omega^T\in \mc A^{[AB]}(T)$
at time $T$:
\begin{equation}\label{eq:initialdataintro}
    \mc I^{[AB]}_T(\omega^T) \doteq \big\{ u_0 \in \mathbf L^{\infty}(\mathbb R) \quad \mid \quad \mc S_T^{[AB]+} u_0 = \omega^T\big\}.
\end{equation}

In the case of 
 conservation laws with flux independent on the space variable, 
the initial data identification problem
was firstly studied for the Burgers equation in~\cite{MR3643881,zuazua2020,zuazua2023}, and next 
 for general uniformly convex flux in~\cite{COLOMBO2020},
where it is fully characterized the initial set of data evolving to a given profile, and it is shown
that such a set is convex. 
Similar results were obtained
in~\cite{ZuazuainverseproblemHJ,ZZreachableset}
for Hamilton-Jacobi equations with convex Hamiltonian,
and in~\cite{colomboperrollazsylla2023}
for  smoothly, space dependent,
conservation laws or Hamilton-Jacobi equations.
\medskip

When the flux is of the form~\eqref{discflux}
with $f_l\neq f_r$, 
the initial data reconstruction problem
is more challenging 
because 
one has to deal with the
richer and more complicated {\it near-interface}
wave structure of $AB$-entropy solutions.
This is
due to the presence in the solution of waves that are reflected or refracted  
through the discontinuity interface $x=0$, as well as of shock discontinuities emerging from the interface
at positive times
(see the analysis in~\cite{talamini_ancona_attset}).
Nonetheless,
we are still able to provide
a full characterization of the initial set~\eqref{eq:initialdataintro} 
by suitable integral inequalities,
for every given  $\omega^T\in \mc A^{[AB]}(T)$, and we  show
that~\eqref{eq:initialdataintro}
shares almost the same geometric and topological
properties of the  initial set for
 conservation laws with uniformly convex flux
 independent on the space variable.
Notably, a distinctive difference 
from the classical smooth case
is the lack of convexity of the 
initial set~\eqref{eq:initialdataintro} 
as shown in the Example~\ref{exmp:nonconvex}.
To 
establish these results 
we will rely on:
\begin{itemize}
[leftmargin=25pt]
\item[-] a suitable definition of \textit{$AB$-backward evolution operator $\mc S^{[AB]-}_T$} given in~\cite{talamini_ancona_attset}, and on the structural properties of the range of $\mc S^{[AB]-}_T$ therein analized; 
\item[-] a proper concept of \textit{$AB$-genuine/interface characteristic}
for $AB$-entropy solutions  which can ``travel'' along the discontinuity interface $x=0$ (see Definition~\ref{interfacecharacteristics}).
\end{itemize}
Given an $AB$-entropy solution~$u$,
a time horizon $T>0$, and a point $x\in\R$,
we will let $\mathcal{C}_0(u,x)$ denote the set of the initial
positions $\zeta(0)$ of the $AB$-genuine characteristics
$\zeta$ for $u$ that
reach the point $x=\zeta(T)$ at time $T$
(cfr.~\eqref{eq:ABchar-set0}). 
We recall that any element $\omega^T\in \mc A^{[AB]}(T)$
admits one-sided limits $\omega^T(x-), \omega^T(x+)$ at every point
$x\in\R$, and that has at most countably many discontinuities (see~\cite{talamini_ancona_attset}).
Then, we summarize the main results of the paper 
in the following

\begin{thm}
\label{thm:main}
Given $\omega^T\in \mc A^{[AB]}(T)$, set
\begin{equation}
    \label{eq:vertexinitialset}
     u_0^* \doteq  \mc S^{[AB]-}_T \omega^T\,,
\end{equation}
and
\begin{equation}
    \label{eq:sol-vertexinitialset}
    u^*(\cdot,t) \doteq \mc S^{[AB]+}_t u^*_0 \qquad \forall~t \in [0,T]\,.
\end{equation}
Then, letting $\mc I^{[AB]}_T(\omega^T)$ be the set defined in~\eqref{eq:initialdataintro}, the following properties hold:
\begin{itemize}
    \item[(i)] $u_0\in \mc I^{[AB]}_T(\omega^T)$
    if and only if, for every point $\overline x$ of continuity of
    $\omega^T$, there exists $\overline y \in \mathcal{C}_0(u^*, \overline x)$ such that
    there hold
    \begin{equation}
\label{condleq}
\int_y^{\ol y} u_0(x) \dif x \leq \int_y^{\overline y}u^*_0(x) \dif x, \qquad \forall \quad y < \min \mathcal{C}_0(u^*,\overline x),
\end{equation}
and 
\begin{equation}
\label{condgeq}
\int_{\overline y}^y u_0(x) \dif x \geq \int_{\overline y}^y u_0^*(x) \dif x, \qquad \forall \quad y > \max \mathcal{C}_0(u^*,\overline x).
\end{equation}
     \item[(ii)]
     The set $\mc I^{[AB]}_T(\omega^T)$ is an infinite dimensional cone  which has vertex $u_0^*$ and is in general not convex.
\end{itemize}
\end{thm}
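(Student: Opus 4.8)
The plan is to prove (i) by a double-inclusion argument that transfers the known characterization of the attainable set and the backward operator into pointwise-integral conditions along $AB$-genuine characteristics, and then to deduce (ii) from (i) by linearity of the defining inequalities together with one explicit non-convex example. For (i), the key structural fact I would use is that $u_0^*=\mc S^{[AB]-}_T\omega^T$ is the maximal element of $\mc I^{[AB]}_T(\omega^T)$ in a suitable ordering adapted to the sign of characteristic speeds: on the region where genuine characteristics of $u^*$ travel to the right of the interface, $u_0^*$ dominates any other admissible datum in the sense of~\eqref{condgeq}, and symmetrically on the left in the sense of~\eqref{condleq}. Concretely, fix a continuity point $\overline x$ of $\omega^T$ and let $\mathcal{C}_0(u^*,\overline x)=[\,\min\mathcal C_0(u^*,\overline x),\,\max\mathcal C_0(u^*,\overline x)\,]$ be the (interval) backward characteristic set; the claim is that $u_0$ yields $\mc S^{[AB]+}_T u_0=\omega^T$ precisely when $u_0$ agrees with $u_0^*$ in an averaged sense that allows the creation of the same shocks feeding $\overline x$ at time $T$. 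The forward direction: assuming $\mc S^{[AB]+}_T u_0=\omega^T$, I would run $AB$-genuine characteristics backward from the shock (or from $\overline x$ if it is a point of continuity of the solution) and use conservation of mass in the triangular region swept between two such characteristics, exactly as in the convex space-independent case of~\cite{COLOMBO2020}, but now carefully tracking the refraction/reflection rules at $x=0$ encoded in the connection $(A,B)$; the inequalities~\eqref{condleq}–\eqref{condgeq} then express that the net flux through the lateral boundaries of that region is controlled by the corresponding quantity for $u_0^*$, because $u_0^*$ is the backward solution that ``spreads'' the data maximally. The reverse direction: given a $u_0$ satisfying~\eqref{condleq}–\eqref{condgeq} for every continuity point $\overline x$, I would show $\mc S^{[AB]+}_T u_0$ and $\omega^T$ have the same value at every continuity point of $\omega^T$, hence coincide in $\mathbf L^\infty$ since $\omega^T$ has at most countably many discontinuities; this uses the Lax–Oleinik-type representation of $\mc S^{[AB]+}_T$ away from the interface together with the interface wave analysis of~\cite{talamini_ancona_attset} to handle the countably many $AB$-genuine/interface characteristics that touch $x=0$.

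For part (ii), once (i) is established the cone structure is almost immediate: the set of inequalities~\eqref{condleq}–\eqref{condgeq} defining $\mc I^{[AB]}_T(\omega^T)$ is stable under the map $u_0\mapsto u_0^*+\lambda(u_0-u_0^*)$ for $\lambda\ge 0$, because each inequality can be rewritten as $\int (u_0-u_0^*)\le 0$ or $\ge 0$ over the relevant half-line, and these are preserved under multiplication of $u_0-u_0^*$ by a nonnegative scalar; one must also check that the existence of a suitable $\overline y\in\mathcal C_0(u^*,\overline x)$ is not destroyed, which follows since $u_0^*$ itself ($\lambda=0$) satisfies the conditions with equality for the choice $\overline y$, so one can always fall back on that witness. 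Thus $u_0^*$ is the vertex and every ray from $u_0^*$ through an admissible $u_0$ stays in the set; that the cone is infinite-dimensional follows by exhibiting infinitely many linearly independent perturbations of $u_0^*$ supported in a single backward characteristic funnel (e.g.\ oscillations of arbitrarily high frequency with controlled averages), each of which keeps the integral inequalities satisfied. Non-convexity is witnessed by Example~\ref{exmp:nonconvex}, which one checks directly against the characterization in (i): two admissible data whose average violates one of~\eqref{condleq}–\eqref{condgeq} because the ``admissible witness'' $\overline y$ differs for the two data and no common $\overline y$ works for the midpoint.

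The main obstacle I anticipate is the careful bookkeeping at the interface in the forward direction of (i): unlike the space-independent convex case, an $AB$-genuine characteristic reaching $\overline x$ at time $T$ may have spent part of its history travelling along $x=0$ (an interface characteristic in the sense of Definition~\ref{interfacecharacteristics}), and the conservation-of-mass argument in the triangular region must be replaced by a balance over a region with a portion of its boundary lying on the interface, through which the flux jumps from $f_l$ to $f_r$. Making the integral inequalities~\eqref{condleq}–\eqref{condgeq} come out with the correct bounds therefore requires the precise $AB$-admissibility relations at $x=0$ and the monotonicity properties of $\mc S^{[AB]-}_T$ from~\cite{talamini_ancona_attset}; establishing that $u_0^*$ is genuinely the extremal ``spreading'' datum in the presence of interface characteristics is the technical heart of the argument, and I expect the bulk of the proof to be devoted to it.
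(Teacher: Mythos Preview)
Your overall architecture---divergence theorem/conservation of mass in regions bounded by characteristics, plus linearity to get the cone---matches the paper. But you misidentify the technical crux of part~(i), and this is a genuine gap.

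In the forward direction you need to produce, for each continuity point $\overline x$, a witness $\overline y \in \mathcal C_0(u^*,\overline x)$ such that the flux balance yields \eqref{condleq}--\eqref{condgeq}. The paper does this by proving (Proposition~\ref{Clemma}) that for \emph{any} $u$ with $u(\cdot,T)=\omega^T$ one has $\mathcal C(u^*,\overline x)\cap\mathcal C(u,\overline x)\neq\emptyset$: there is always an $AB$-gic shared by $u$ and $u^*$, and its foot is the $\overline y$. This shared characteristic is exactly what makes the flux functional $\mathcal F$ (Lemma~\ref{Flemma}) attain equality on one side of the balance, turning the convexity inequality $\mathcal F(\zeta,u)\geq\mathcal F(\zeta,u^*)$ into the one-sided integral bounds. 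Your proposal instead attributes the existence of $\overline y$ to an extremality/ordering property of $u_0^*$ (``maximal spreading''), which is neither formulated nor proved in the paper and would not by itself single out a common characteristic. Proposition~\ref{Clemma} is the longest and most delicate lemma in the paper---a case analysis tracking interface traces, blow-ups, and comparison with the shock curve of $u^*$---and nothing in your outline substitutes for it. Relatedly, you assert $\mathcal C_0(u^*,\overline x)$ is an interval; the paper never claims this and only uses $\min/\max$ and closed-graph properties.

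For the reverse direction, the paper does not argue pointwise via a Lax--Ole\u{\i}nik representation as you propose; it shows $\int_{x_1}^{x_2}\omega^T=\int_{x_1}^{x_2}\mc S^{[AB]+}_T u_0$ for all $x_1<x_2$, again by the flux functional and a case split on whether the relevant $AB$-gics for $u$ and $u^*$ cross before $t=0$. Your pointwise approach might be made to work, but it is a different route and would require its own interface-adapted Lax--Ole\u{\i}nik formula.

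Part~(ii) is essentially right; note only that the witness $\overline y$ for $u_0^*+\lambda(u_0-u_0^*)$ is the \emph{same} $\overline y$ that works for $u_0$ (not a fallback to $u_0^*$'s witness), since multiplying $u_0-u_0^*$ by $\lambda\geq0$ preserves each inequality with the same endpoints.
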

We will establish further geometric and topological properties of the initial set \eqref{eq:initialdataintro}
besides the ones stated in Theorem~\ref{thm:main}-(ii), which are collected in Theorem~\ref{geometrical properties} stated in \S~\ref{sec:charmain}.
%

\medskip

Initial data identification problems 
are often formulated 
as least square optimization problems
associated to observable states at a final time
(also known in the literature as \textit{data assimilation problems}).
These type of problems
arise naturally in environmental sciences~\cite{BP,bennett,MR2454278,BMPSM,kalnay}, 
but also in life sciences (see~\cite{CFMM} and references therein), to improve the forecast of a model
or to refine numerical simulations.
%
Similar issues, also related to parameter identification problems, arise  in traffic flow modeling~\cite{JD,MR2773421,XCDHMMPCSS}, in batch sedimentation~\cite{MR3042084,MR3349692}, or in petroleum reservoir engineering~\cite{ORL}.

Conservation
laws with spatially discontinuous flux have many relevant applications in physics and engineering including: 
porous media models
with changing rock types (for oil reservoir simulation)~\cite{GimseRisebro1992,GR93};
sedimentation in waste-water treatment plants~\cite{
MR2136036,
MR1462049}; traffic flow dynamics
 with roads of variable width or surface conditions~\cite{Mochon1987};
Saint Venant models of blood flow in endovascular treatments~\cite{MR1901663,Ca};
radar shape-from-shading models~\cite{MR1912069}.


The paper is organized as follows.
\begin{itemize}
[leftmargin=22pt]
    \item [-] In \S~\ref{sec:general} we collect the 
    definitions of interface connection $(A,B)$, of $AB$-entropy solution and of $AB$-backward solution operator.
    \item[-] In \S~\ref{sec:main} we introduce the  $AB$-genuine/interface characteristics 
    and state the main results, Theorem~\ref{initialdataid} (integral inequalities) and Theorem~\ref{geometrical properties} (structural and geometrical properties), which yield Theorem~\ref{thm:main}.
    \item[-] In \S~\ref{sec:charmain} we establish some basic properties enjoyed by
    the $AB$-genuine/interface characteristics.
    \item[-] In \S~\ref{sec:firstthm} we prove Theorem \ref{initialdataid}.
    \item[-] In \S~\ref{sec:secthm} we prove Theorem \ref{geometrical properties} and provide an example of non convex initial set $\mc I^{[AB]}_T(\omega^T)$.
\end{itemize}

\section{Basic definitions and general setting}\label{sec:general}

\subsection{Connections and $AB$-entropy solutions}\label{subsec:connAB}

We recall here the definitions and properties of interface connection and of admissible solution satisfying an interface entropy condition introduced in~\cite{Adimurthi2005}. 
Throughout the paper, for the one-sided limits of a function $u(x)$ we will use the notation
\begin{equation}
    u(x\,\pm)\doteq \lim_{y\to x\,\pm} u(y).
\end{equation}

\begin{defi}[{\bf Interface Connection}]\label{ABsol}
\label{def:connect}
Let $f$ be a flux as in~\eqref{discflux}
satisfying the assumptions~\eqref{eq:flux-assumption-1}-\eqref{eq:flux-assumption-3}.
A pair of values $(A,B)\in \mathbb R^2$ is called a \textit{connection} if
\begin{enumerate}
\item $f_l(A) = f_r(B)$,
\item 
$f'_l(A)\leq 0$, \ $f'_r(B)\geq 0$.
\end{enumerate} 
We will say that a connection $(A,B)$ is  \textit{critical} if 
$f'_l(A)= 0$, or $f'_r(B)= 0$,
i.e. if
$A=\theta_l$ or $B=\theta_r$.
\end{defi}

\begin{figure}
\centering
\begin{tikzpicture}[scale = 0.7]
\draw[->] (-6,0)--(4,0)node[right]{$u$};
\draw[scale = 0.5, domain=-1:5, smooth, variable=\x] plot ({\x}, {(\x-2)*(\x-2)+1});
\draw[scale = 0.5, domain=-9:3, smooth, variable=\x] plot ({\x}, {(0.5*\x+1)*(0.5*\x+2)+1});

\draw (-5,5) node{$f_l$};
\draw (3,5) node{$f_r$};

\draw[very thick, blue] (-3.8,3)--(2.13,3);
\draw[dotted] (-3.8,3)--(-3.8,0)node[below]{$A$};
\draw[dotted] (2.13,3)--(2.13,0)node[below]{$B$};

\draw[dotted] (-1.5,0.3)--(-1.5,0)node[below]{$\theta_l$};
\draw[dotted] (1,0.5)--(1,0)node[below]{$\theta_r$};
\end{tikzpicture}
\caption{An example of connection $(A,B)$ with $f_l, f_r$ strictly convex fluxes}
\label{Connectionpic}
\end{figure}
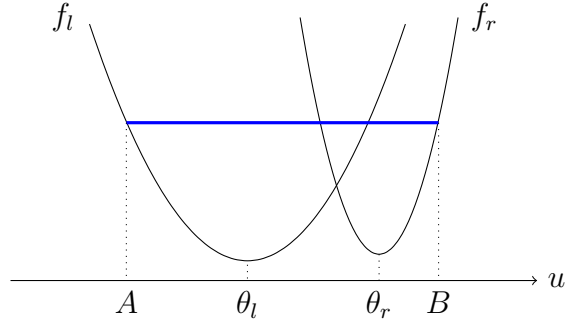

Clearly, condition (1) of Definition~\ref{ABsol} is equivalent to
$A\leq \theta_l$, $B \geq \theta_r$.
For sake of uniqueness,
it is employed in~\cite{BKTengquist} the  {\it adapted entropy} 
\begin{equation}
\label{eq:cABdef}
    \eta_{AB}(x,u)=\big|u-c^{AB}(x)\big|,\qquad\quad
    c^{AB}(x) \stackrel{\cdot}{=}
\begin{cases}
A, & x \leq 0, \\
B, & x \geq 0,
\end{cases}
\end{equation}
to select the unique
solution of the Cauchy problem~\eqref{conslaw}-\eqref{initdat} that satisfies the Kru\v{z}kov-type 
entropy inequality
\begin{equation}\label{adaptedABentropy}
\left|u-c^{AB}\right|_t + \left[\mathrm{sgn}(u-c^{AB})(f(x,u)-f(x,c^{AB}))\right]_x \leq 0, \quad \textrm{in}\ \ \mathcal{D}^{\prime}\,,
\end{equation}
in the sense of distributions, which leads to the following definition.

\begin{defi}[{\bf $AB$-entropy solution}]\label{defiAB}
Let $(A,B)$ be a connection
and let $c^{AB}$ be the function defined in~\eqref{eq:cABdef}. A function $u \in \mathbf L^{\infty}(\mathbb R \times [0,+\infty[)$ is said to be an \textit{$AB$-entropy solution} of the problem \eqref{conslaw},\eqref{initdat} if the following holds:
\begin{enumerate}
[leftmargin=25pt]
\item $u$ is a distributional solution of \eqref{conslaw} on $\R\times \,]0,+\infty[$, that is, for all test functions $\phi \in \mathcal{C}^1_c$
with compact support 
contained in $\mathbb{R}\times \,]0,+\infty[$,
there holds
\begin{equation*}
\int_{-\infty}^{\infty}\int_0^{\infty}  \big\{u \phi_t+f(x,u)\phi_x \big\}\dif x \dif t = 0\,.
\end{equation*}
\item $u$ is a Kru\v{z}kov entropy weak solution of \eqref{conslaw},\eqref{initdat} on $(\mathbb R \setminus \{0\}) \times \,]0,+\infty[$, that is, $t \mapsto u(\cdot,t)$ is a continuous map from $[0,+\infty[\,$ to $\mathbf L^1_{\textup{loc}}(\mathbb R)$,
 the initial condition~\eqref{initdat} is satisfied almost everywhere,
and:
\begin{itemize}
[leftmargin=29pt]
\item[(2.a)]
for any non-negative test function $\phi\in\mathcal{C}^1_c$ with compact support 
contained in $]-\infty,0[\,\times $ $\,]0,+\infty[$, there holds
\begin{equation*}
\int_{-\infty}^0\int_0^{\infty}  \big\{|u-k| \phi_t + \mathrm{sgn}(u-k)\left(f_l(u) - f_l(k)\right) \phi_x \big\}
\dif x \dif t \geq 0, \quad \forall k \in \mathbb R\,;
\end{equation*}
\item[(2.b)]
for any non-negative test function $\phi\in\mathcal{C}^1_c$ with compact support 
contained in $]0,+\infty[\,\times$ $]0,+\infty[$, there holds
\begin{equation*}
\int_0^{\infty}\int_0^{\infty} \big\{
|u-k| \phi_t + \mathrm{sgn}(u-k)\left(f_r(u) - f_r(k)\right) \phi_x
\big\}\dif x \dif t \geq 0, \quad \forall k \in \mathbb R\,.
\end{equation*}
\end{itemize}

\item $u$ satisfies the interface
entropy inequality relative to the connection $(A,B)$, that is, 
for any non-negative test function $\phi\in\mathcal{C}^1_c$ with compact support 
contained in $\R\times ]0,+\infty[$, there holds
\begin{equation*}
\int_{-\infty}^{\infty}\int_0^{\infty} \big\{\left|u-c^{AB}\right| \phi_t + \mathrm{sgn}(u-c^{AB})\left(f(x,u) - f(x,c^{AB})\right)\phi_x \big\}\dif x \dif t \geq 0\,.
\end{equation*}
\end{enumerate}
\end{defi}

%
\begin{remark}
\label{rem:abentr-sol-prop1}
Since
the fluxes $f_l,f_r$ in~\eqref{discflux} are uniformly convex, 
by Property (2) of Definition~\ref{defiAB} it follows
that, if $u$ is
an $AB$-entropy solution, 
then $u(\cdot\,,t)$ is a function of locally bounded variation on $\mathbb R \setminus \{0\}$,
for any~$t>0.$
On the other hand, 
relying on~ \cite{panov,MR1869441},
and because of the $\mathbf L^1_{\textup{loc}}$-continuity of the map $t \mapsto u(\cdot,t)$,
we deduce that
$u$ admits left and right strong traces at $x = 0$ for all $t>0$,  i.e. that 
there exist the one-sided limits  \begin{equation}\label{traces}
u_l(t)\doteq u(0-,t),
\qquad u_r(t)\doteq u(0+,t),
\qquad\forall~t>0\,.
\end{equation}
Moreover, 
by properties (1), (3) of Definition~\ref{defiAB},
and thanks to the analysis in~\cite{talamini_ancona_attset},
we deduce that
$u$ must satisfy at almost any time $t>0$ the
interface conditions
\begin{equation}
\label{ABtraces}
\begin{aligned}
    &f_l(u_l(t)) = f_r(u_r(t)) \geq f_l(A) = f_r(B), \\ 
    \noalign{\smallskip}
&\big(u_l(t) \leq \theta_l, \quad u_r(t) \geq \theta_r\big) \ \ \Longrightarrow \ \ u_l(t) = A, \ \ u_r(t) = B\,.
\end{aligned}
\end{equation}
\end{remark}
\medskip


It was proved in~\cite{Adimurthi2005,BKTengquist} (see also~\cite{MR2807133,Garavellodiscflux})
that  $AB$-entropy solutions of~\eqref{conslaw},\eqref{initdat}
with bounded initial data are unique,
and  $\mathbf{L}^1$-contractive 
with respect to their initial data.
Thus, one can define a semigroup map
\begin{equation}
    \sabp: [0,+\infty[\,\times\,{\bf L^\infty}(\R) \to
   {\bf L^\infty}(\R),
   \qquad\quad (t,u_0)\mapsto 
   \sabpt u_0\,,
   \end{equation}
where the function $u(x,t) \doteq \sabpt u_0(x)$ provides the unique  $AB$-entropy  solution of the Cauchy problem \eqref{conslaw}, \eqref{initdat}.
Such a map is ${\bf L^1}$-stable also with respect to the time $t$ and the
values $A,B$ of the connection, as 
shown in~\cite{talamini_ancona_attset}.

\medskip

\subsection{Backward 
solution operator} 

We  
review here the concept of 
backward solution
operator associated to a connection~$(A,B)$
introduced  in~\cite{talamini_ancona_attset},
referring to~\cite{talamini_ancona_attset}
for further details and properties.

Given a flux  $f$  as in~\eqref{discflux}
satisfying the assumptions~\eqref{eq:flux-assumption-1}-\eqref{eq:flux-assumption-3},
and a connection
 $(A,B)$,
observe that, setting
\begin{equation}
\label{eq:bar-AB-def}
    \overline B \doteq 
    ({f_r}_{|\,]-\infty, \theta_r]})^{-1}\circ f_r(B), \qquad\quad \overline A \doteq ({f_l}_{\mid [\theta_l, +\infty[\,})^{-1}\circ f_l(A)\,,
\end{equation}
where $f_{|\,I}$
denotes the restriction of the function $f$ to the interval $I$,
the pair $(\overline B, \overline A\,)$
provides a connection for the symmetric flux
\begin{equation}
\label{eq:symm-flux}
    \overline{f}(x,u) = \begin{cases}
    f_r(u) , & x \leq 0, \\
    f_l(u), & x \geq 0\,,
    \end{cases}
\end{equation}
(see Figure \ref{Connectionpicsym}).
\begin{figure}[h]
\centering
\begin{tikzpicture}[scale = 0.7]
\draw[->] (-6,0)--(4,0)node[right]{$u$};
\draw[scale = 0.5, domain=-1:5, smooth, variable=\x] plot ({\x}, {(\x-2)*(\x-2)+1});
\draw[scale = 0.5, domain=-9:3, smooth, variable=\x] plot ({\x}, {(0.5*\x+1)*(0.5*\x+2)+1});

\draw (-5,5) node{$f_l$};
\draw (3,5) node{$f_r$};

\draw[very thick, blue] (-3.8,3)--(2.13,3);
\draw[dotted] (-3.8,3)--(-3.8,0)node[below]{$A$};
\draw[dotted] (2.1,3)--(2.1,0)node[below]{$B$};
\draw[dotted] (0.8,3)--(0.8,0)node[below]{$\overline A$};
\draw[dotted] (-0.1,3)--(-0.1,0)node[below]{$\overline B$};

\end{tikzpicture}
\caption{The connection $(\overline B, \overline A)$ of the symmetric flux $\overline f(x, u)$ defined in \eqref{eq:symm-flux}.}
\label{Connectionpicsym}
\end{figure}
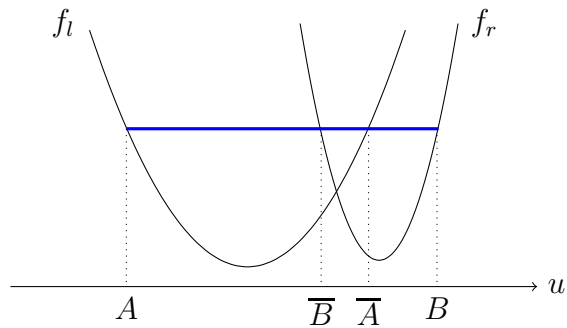
Then, letting
$\sobapt u_0(x)$ denote
the unique $\overline B\,\overline A$-entropy solution
of
\begin{equation}\label{eq:invertedproblem}
    \begin{cases}
        u_t+\overline{f}(x,u)_x = 0 & x \in \mathbb{R}, \quad t \geq 0, \\
            \noalign{\smallskip}
        u(x,0) = u_0(x) &  x \in \mathbb R,
    \end{cases}
\end{equation}
evaluated at $(x,t)$,
we shall define the {\it backward 
solution operator} associated to the connection  $(A,B)$ in terms of the operator $\sobapt$ as follows.

\begin{defi}[{\bf $AB$-Backward solution operator}]\label{def:backop}
Given a connection $(A,B)$, the \textit{backward solution operator}
associated to $(A,B)$ is the map
$\mc S_{(\cdot)}^{\, [A B]-} :
[0,+\infty)\times {\bf L^{\infty}(\R)} \to {\bf L^{\infty}(\R)}$,  defined by
\begin{equation}
     \label{eq:backw-conn-sol-def}
    \sabmt \omega (x) \doteq  \sobapt 
    \big(\omega(-\ \cdot\,)\big)
     (-x)\qquad x\in\R, \ t\geq 0\,. 
\end{equation}
\end{defi}

\smallskip

\section{Statement of the 
main results}\label{sec:main}

In this section we introduce the fundamental concept 
of {\it genuine/interface characteristic}
for an $AB$-entropy solution, 
and we collect the statement of the main results of the paper.

\subsection{Genuine/interface characteristics}\label{subsec:gics}
The definition of {\it genuine/interface characteristic}
extends to the 
setting of $AB$-entropy solutions the classical definition of genuine characteristic for a conservation law $u_t+f(u)_x=0$ (see~\cite{dafermosgenchar,Dafermoscontphysics}).
Throughout the following we fix a time $T>0$,
we
consider a fixed connection $(A,B)$, and we set 
\begin{equation}
\label{eq:fluxinteface}
    \gamma \doteq f_l(A) = f_r(B).
\end{equation}

\begin{defi}[{\bf $AB$-genuine/interface characteristics}]
\label{interfacecharacteristics}
Let $u \in \mathbf L^{\infty}(\mathbb R \times [0,+\infty[\, ; \, \R )$ be an
$AB$-entropy solution of~ \eqref{conslaw}.
We 
say that a Lipschitz continuous function $\zeta:[0,T] \to \mathbb R$
is an \textit{$AB$-genuine/interface characteristic}
(\textit{$AB$-gic}) for $u$ if  the following conditions hold:

\begin{enumerate}
    \item[(i)] for a.e. $t \in [0,T]$ with $\zeta(t) \neq 0$ it holds
    $$\dot{\zeta}(t) = f^{\prime}(u(\zeta(t)-,t),\zeta(t))= f^{\prime}(u(\zeta(t)+,t),\zeta(t));$$
\item[(ii)]  for a.e. $t \in [0,T]$ with $\zeta(t) = 0$, it holds $$f_l(u(\zeta(t)-,t) = \gamma  = f_r(u(\zeta(t)+,t)).$$
\end{enumerate}
\end{defi}

\begin{figure}
    \centering

\tikzset{every picture/.style={line width=0.75pt}} 

\tikzset{every picture/.style={line width=0.75pt}} 

\begin{tikzpicture}[x=0.75pt,y=0.75pt,yscale=-1,xscale=1]

\draw    (226,180) -- (226,33) ;
\draw [shift={(226,30)}, rotate = 90] [fill={rgb, 255:red, 0; green, 0; blue, 0 }  ][line width=0.08]  [draw opacity=0] (5.36,-2.57) -- (0,0) -- (5.36,2.57) -- cycle    ;
\draw    (150,180) -- (333,180) ;
\draw [shift={(336,180)}, rotate = 180] [fill={rgb, 255:red, 0; green, 0; blue, 0 }  ][line width=0.08]  [draw opacity=0] (5.36,-2.57) -- (0,0) -- (5.36,2.57) -- cycle    ;
\draw [line width=1.5]    (296,50) -- (226,91.2) -- (226,130) -- (176,180) ;
\draw    (150,50) -- (336,50) ;
\draw    (236,110) -- (298.09,90.6) ;
\draw [shift={(300,90)}, rotate = 162.65] [color={rgb, 255:red, 0; green, 0; blue, 0 }  ][line width=0.75]    (6.56,-1.97) .. controls (4.17,-0.84) and (1.99,-0.18) .. (0,0) .. controls (1.99,0.18) and (4.17,0.84) .. (6.56,1.97)   ;
\draw    (494,180) -- (494,33) ;
\draw [shift={(494,30)}, rotate = 90] [fill={rgb, 255:red, 0; green, 0; blue, 0 }  ][line width=0.08]  [draw opacity=0] (5.36,-2.57) -- (0,0) -- (5.36,2.57) -- cycle    ;
\draw    (424,180) -- (571,180) ;
\draw [shift={(574,180)}, rotate = 180] [fill={rgb, 255:red, 0; green, 0; blue, 0 }  ][line width=0.08]  [draw opacity=0] (5.36,-2.57) -- (0,0) -- (5.36,2.57) -- cycle    ;
\draw [line width=1.5]    (534,50) -- (494,90) -- (494,120) -- (554,180) ;
\draw    (431,50) -- (574,50) ;
\draw    (490,110) -- (451.79,90.89) ;
\draw [shift={(450,90)}, rotate = 26.57] [color={rgb, 255:red, 0; green, 0; blue, 0 }  ][line width=0.75]    (6.56,-1.97) .. controls (4.17,-0.84) and (1.99,-0.18) .. (0,0) .. controls (1.99,0.18) and (4.17,0.84) .. (6.56,1.97)   ;

\draw (291,27.4) node [anchor=north west][inner sep=0.75pt]  [font=\footnotesize]  {$t_{x}$};
\draw (337,182.4) node [anchor=north west][inner sep=0.75pt]  [font=\footnotesize]  {$x$};
\draw (319,72.4) node [anchor=north west][inner sep=0.75pt]  [font=\scriptsize]  {$f( u_{l}) \ =\ f( u_{r}) \ =\ \gamma $};
\draw (575,182.4) node [anchor=north west][inner sep=0.75pt]  [font=\footnotesize]  {$x$};
\draw (531,27.4) node [anchor=north west][inner sep=0.75pt]  [font=\footnotesize]  {$t_{x}$};

\end{tikzpicture}

    \caption{Example of a member of $\mc C(u, x)$.}
    \label{fig:exchar}
\end{figure}
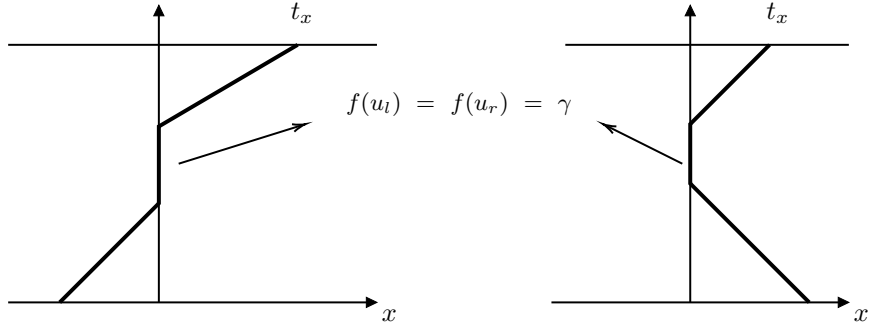

\begin{remark}
    \label{rem:oncharact}
    Applying the classical theory of generalized characteristics~\cite{dafermosgenchar}
    it follows that any 
    $AB$-gic $\zeta \in \mr{Lip}([0,T]\, ; \, \mathbb R)$ is a piecewise affine function 
    for which there exist $0\leq \tau_1\leq \tau_2\leq T$,
    such that:
     \begin{itemize}
      [leftmargin=25pt]
     \item[-] $\zeta(t)=0$ 
     and $f_l(u_l(t))=f_r(u_r(t))=\gamma$, for all $t\in [\tau_1,\tau_2]$;
     \item[-]  the restriction of $\zeta$
     to $[0, \tau_1[$ and to $]\tau_2, T]$
     is either a classical genuine characteristic 
     for the conservation law $u_t+f_l(u)_x=0$\,
     on $\{x<0\}$, or it  
     is a classical genuine characteristic 
     for the conservation law $u_t+f_r(u)_x=0$\,
     on $\{x>0\}$.
     \end{itemize}
    Note that, if $\tau_1=\tau_2\in\{0,T\}$, then
    a curve defined by a function $\zeta$ satisfying the above conditions
    can cross the interface $x=0$ only
    at its starting or terminal points. Thus, in this case
    $\zeta$
    is a a classical genuine characteristic 
     for $u_t+f_l(u)_x=0$ 
    on $\{x<0\}$ or for $u_t+f_r(u)_x=0$\,
     on $\{x>0\}$, in the whole interval $]0,T[$.

    \end{remark}

    \begin{remark}
    \label{rem:oninterface-charact}
    By Definition \ref{interfacecharacteristics}
    an $AB$-gic
    can ``travel" along the discontinuity interface $x = 0$ in an interval $[\tau_1, \tau_2]$ only if in such an interval the flux of the solution is the minimum possible, i.e. if $f_l(u_l(t))=f_r(u_r(t)) = f_l(A)=f_r(B)$
    for all $t\in [\tau_1, \tau_2]$,
    with $u_l, u_r$ as in~\eqref{traces}.
    This definition can be motivated by the following observation. 
    Let $f$ be a smooth
   convex flux. Then,  
    relying on the  inequality 
$$
f(u) -f(v) -f^{\prime}(u) (v-u) \geq 0 \qquad \forall \; u, v \in \mathbb R,
$$
    one can verify that
    a classical genuine characteristic $\zeta:[0,T] \to \mathbb R$ for a solution $u$ of 
    the conservation law $u_t+f(u)_x=0$
    satisfies at  a.e. $t \in [0,T]$ the equality
$$f(u(\zeta(t), t)) - \dot \zeta (t) u(\zeta(t), t) = \min_{v \in \mathbb R} \big\{f(v) - \dot \zeta (t) v\big\}.
$$
Therefore if the characteristic is ``vertical" (i.e. $\dot{ \zeta} = 0$) we simply obtain 
$$
f(u(\zeta(t), t)) = \min_{v \in \mathbb R} f(v),
\quad\ \text{for a.e.}\ \ t\in[0,T].
$$
In 
view of the interface constraint~\eqref{ABtraces}
for $AB$-entropy solutions, it is then natural to require in this setting that a ``characteristic" 
lying on the interface $x=0$
be called 
``genuine" only if it minimizes the admissible flux
at the interface, i.e. if it
satisfies condition~(ii) of 
Definition~\ref{interfacecharacteristics}.
\end{remark}

Next,  given an \textit{$AB$-entropy solution} $u$ of~ \eqref{conslaw},
we consider
the set of $AB$-gic
passing through a point $x\in\R$ at time $t=T$, and the set of the corresponding initial points at time $t=0$,
setting:
\begin{equation}
\label{eq:ABchar-set}
\begin{aligned}
    \mathcal{C}(u,x)
    &\doteq \Big\{\zeta\in \mr{Lip}([0,T]\, ; \, \mathbb R)
    \ | \ \zeta(T)=x \ \ \text{and}
    \ \ \zeta \ \text{is an \, $AB$-gic\, 
    for $u$}
    \Big\},
\end{aligned}
\end{equation}
and 
\begin{equation}
\label{eq:ABchar-set0}
\mathcal{C}_0(u,x) \doteq \Big\{\zeta(0) \; \mid \;  \zeta \in \mathcal{C}(u,x)\Big\}.
\end{equation}
%


The set $\mathcal{C}_0(u,x)$ is a fundamental tool to  analyze the set of initial 
data leading to an attainable profile
$\omega^T$.
To this end, throughout this section we consider the initial
datum $u_0^*$ in~\eqref{eq:vertexinitialset} 
    defined as the image of $\omega^T$
    through the backward solution operator $\mc S^{[AB]-}_T$,
    and we let $u^*(x,t)$ denote the corresponding $AB$-entropy solution 
    with initial datum $u_0^*$, defined in~\eqref{eq:sol-vertexinitialset}.
    Moreover, we let $\mc A^{[AB]}(T)$ denote the set of reachable profiles at time $T>0$ defined in~\eqref{eq:attsetintro}.
    We recall that any element of
    $\mc A^{[AB]}(T)$
has at most countably many discontinuities (see~\cite{talamini_ancona_attset}).
\medskip

     \subsection{Examples}\label{subsec:examples}
     We consider here different examples of $AB$-entropy solutions $u$ that reach the same attainable profile
     $\omega^T\in\mc A^{[AB]}(T)$ at time $T$, which illustrate  various  structures and properties
     of the sets $\mathcal C(u, x)$, $\mathcal C_0(u, x)$.
     Although we choose a
     relatively simple profile, 
     it gives already the possibility to
     capture the essence and the key points of Definition~\ref{interfacecharacteristics}. 
     Namely, given $L_0<0$, we define
\begin{equation}
\omega_1(x) = \begin{cases}
p & x < L_0, \\
A & L_0 < x < 0, \\
\ol B & 0 < x,
\end{cases}
\end{equation}
choosing 
\begin{equation}
\label{eq:def-p}
    p>\bs v\doteq \bs v[\,L_0,A, f_l],
\end{equation}
where $\bs v[\,L_0,A, f_l]$
denotes the quantity defined in~\cite[\S~3.2]{talamini_ancona_attset}, 
   that satisfies
   \begin{equation}
   \label{eq:ex1-constraint-12}
       A<\bs v
       <\ol A
   \end{equation}
   and $\ol A, \ol B$ are defined as in~\eqref{eq:bar-AB-def}.
   Here we are assuming that the connection $(A,B)$ is not critical. Moreover, we assume that
   \begin{equation}
   \label{eq:assumpt-omega1}
        f'_l(A)<{L_0}/{T}<f'_l(\bs v)\,.
    \end{equation}
Note that, since $f'_r(\overline B)\leq 0$
    it follows that $\ms R= \ms R[\omega_1, f_r]=0$,
    while~\eqref{eq:def-p}, \eqref{eq:assumpt-omega1} imply $\ms L= \ms L[\omega_1, f_l]=L_0$.
    One can readly verify that $\omega_1$ fulfills the conditions (i)-(ii) of~\cite[Theorem 4.7]{talamini_ancona_attset} 
characterizing a class of attainable profiles in $\mc A^{[AB]}(T)$.
By the analysis in~\cite[see Remark~4.5]{talamini_ancona_attset}
it follows that, because of \eqref{eq:assumpt-omega1}, 
any $AB$-entropy solution reaching 
the profile~$\omega_1$ at time~$T$
must necessary contain at least one shock, located in $\{x \leq 0\}$,
that produces at time~$T$
the discontinuity occurring at $x=L_0$.
We shall now briefly describe four
different $AB$-entropy solutions driving~\eqref{conslaw}, \eqref{discflux}
to $\omega_1$ at time $T$, that are represented in Figures~\ref{exampleu*}-\ref{exampleu3}, with the shock curves coloured in red.

\begin{itemize}
    \item In Figure \ref{exampleu*} it is represented the solution $u^*$ defined as in~\eqref{eq:vertexinitialset}-\eqref{eq:sol-vertexinitialset} by $u^*(\cdot, t) = \mc S^{[AB]+}_t\circ \mc S^{[AB]-}_T \omega_1$.
    This solution contains in particular a 
  compression wave that creates a shock discontinuity  at $(L_0,T)$, which is located on the left of a rarefaction wave centered at the point $(L_0-T\cdot f'_l(\bs v),0)$.
 This rarefaction impinges (from the left) on a shock curve emerging from the interface $x=0$,
 at some time $t=\bs\sigma$,
 which has right state equal to $A$.
 The left trace of $u^*$ at $x=0$ is equal to
 $\ol A$ in the interval $[0, \bs\sigma[$\,,
 and it is equal to $A$ in the interval $\,]\bs\sigma, T]$. Instead the right trace of $u^*$ at $x=0$ is always equal to $\ol B$. 
At any point $(x,T)$, $x \in \,]L_0, 0[\,$,
we can trace
 a unique backward genuine characteristic with slope $f_l^{\prime}(A)$, which meets
the interface $x=0$ at time $t=T-x/f'_l(A)$.
We can then define an $AB$-gic prolonging this characteristic on the side $\{x>0\}$ 
with slope $f_r^{\prime}(\ol B)$.
Another possible choice to backward define an $AB$-gic
is to travel along the interface $x=0$ until some time $\tau$, and then to prolong it
either on the right
(again with slope $f_r^{\prime}(\ol B)$), or on the left if $\tau \leq \bs\sigma$
(with slope $f_l^{\prime}(\ol A)$).
Therefore we  have two distinct minimal and maximal polygonal lines in the set $\mathcal{C}(u^*,  x)$, represented by the blue polygonal lines in Figure \ref{exampleu*}, while all the other blue dashed lines are the segmens of the other elements in $\mathcal{C}(u^*,  x)$.
A more detailed description of these sets for
a profile similar  to $\omega_1$ is given in Remark~\ref{rem:nofinerpart}.

\item The solution $u_1$ represented in Figure \ref{exampleu1} contains a shock 
located in $\{x < 0\}$ which has left state $p$ and right state $A$. Here we are assuming that  the corresponding Rankine-Hugoniot speed 
$\lambda_l(p,A)$ satisfy $L_0-T\lambda_l(p,A) < 0$, which is certainly true if we take $p$ 
sufficiently close to $\ol A$.
In this case we cannot have 
$AB$-gics starting at $(x,T)$, $x \in \,]L_0, 0[\,$, that are backward prolonged on the side  $\{x<0\}$  since the left trace of $u_1$ at $x=0$ is always equal to $A$, and $f'_l(A)< 0$
by Definition~\ref{ABsol}.
Hence, 
the set $\mathcal{C}(u_1,  x)$ is smaller 
than in the previous case and  we have $\mathcal{C}(u_1,  x)\subset \mathcal{C}(u^*,  x)$.

\item  In the case of the solution $u_2$ 
 represented in Figure~\ref{exampleu2}, 
 two rarefaction waves coming from both sides impinge on the interface $x=0$ in the time interval $[0, \bs\sigma[\,$. Therefore, in this interval the left trace of $u_2$ at $x=0$ 
 has values $u_{2,l}>\ol A$, while the
 right trace  at $x=0$ 
 has values $u_{2,r}<\ol B$.
 As a consequence, the only $AB$-gic starting at $(x,T)$, $x \in \,]L_0, 0[\,$, that can be backward prolonged on the side  $\{x<0\}$ after traveling on the interface is the one 
 that remains on the interface in the
 time interval $[\bs\sigma, T-x/f'_l(A)]$, 
 and then continues with slope $f'_l(\ol A)$
 on the side  $\{x<0\}$ in the time interval $[0,\bs\sigma]$. Similarly, the leftmost 
 $AB$-gic starting at $(x,T)$, $x \in \,]L_0, 0[\,$, that is backward prolonged on the side  $\{x>0\}$,
 is the one that remains on the interface in the
 time interval $[\bs\sigma, T-x/f'_l(A)]$, 
 and then continues with slope $f'_r(\ol B)$
 on the side  $\{x>0\}$ in the time interval $[0,\bs\sigma]$. We deduce from this analysis
 that, differently from the other cases, 
 here the set $\mathcal{C}_0(u_2,  x)$ is not an interval. 

\item Finally, we consider the solution $u_3$ 
 represented in Figure~\ref{exampleu2}, where
 besides the shock located in $\{x \leq 0\}$
 reaching the point $(x,T)$, there is another
 a shock located in $\{x \geq 0\}$.
 This shock
 emerges from  the interface $x=0$
 at some time $\tau_1$, and is then reabsorbed by the interface at some later time $\tau_2>\tau_1$, due to the interaction with rarefaction and compression waves coming from the right.
 Here we see that, differently from the previous cases, we have $\max \mathcal{C}_0(u_3,  x)
 < \max \mathcal{C}_0(u^*,  x)$. 

\end{itemize}

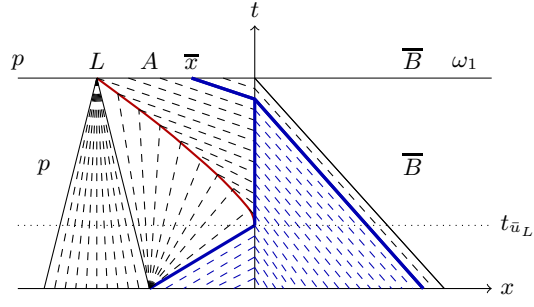
\begin{figure}
\centering
\scriptsize{
\begin{tikzpicture}[scale = 0.7]

\draw (-2,4) node[above]{$A$};
\draw (-4.5,4) node[above]{$p$};
\draw (3,4) node[above]{$\ol B$};

\draw[->] (-4.5,0)--(4.5,0)node[right]{$x$};
\draw[->] (0,0)--(0,5)node[above]{$t$};
\draw[very thin] (-4.5,4)--(4.5,4);
\draw (4,4)node[above]{$\omega_1$};
\draw (-4,2)node[above]{$p$};
\draw (3,2)node[above]{$\ol B$};
\draw (-3,4)node[above]{$L$}--(-4,0);


\draw (-3,4)--(-2,0);
\draw[smooth, red!70!black, thick] plot coordinates{(-3,4)(-1.8,3.1)(-0.7,2.15)(-0.1,1.5)(0,1.2)};
\draw (-1.2, 4)node[above]{$\ol x$};
\draw[blue!70!black, very thick] (0,1.2)--(-2,0);
\draw[dashed] (-3,4)--(-3.8,0);
\draw[dashed] (-3,4)--(-3.6,0);
\draw[dashed] (-3,4)--(-3.4,0);
\draw[dashed] (-3,4)--(-3.2,0);
\draw[dashed] (-3,4)--(-3,0);
\draw[dashed] (-3,4)--(-2.8,0);
\draw[dashed] (-3,4)--(-2.6,0);
\draw[dashed] (-3,4)--(-2.4,0);
\draw[dashed] (-3,4)--(-2.2,0);

\draw[dashed] (-2,0)--(-2.6,3.7);
\draw[dashed] (-2,0)--(-2.2,3.4);
\draw[dashed] (-2,0)--(-1.8,3.1);
\draw[dashed] (-2,0)--(-1.4,2.8);
\draw[dashed] (-2,0)--(-1,2.5);
\draw[dashed] (-2,0)--(-0.7,2.17);
\draw[dashed] (-2,0)--(-0.4,1.9);
\draw[dashed] (-2,0)--(-0.1,1.5);

\draw[dashed]  (-2.6,3.7)--(0,2.8);
\draw[dashed, blue!70!black] (0,2.8)--(2.4,0);
\draw[dashed]  (-2.2,3.4)--(0,2.6);
\draw[dashed, blue!70!black](0,2.6)--(2.2,0);
\draw[dashed]  (-1.8,3.1)--(0,2.4);
\draw[dashed, blue!70!black](0,2.4)--(2,0);
\draw[dashed]  (-1.4,2.8)--(0,2.2);
\draw[dashed, blue!70!black](0,2.2)--(1.8,0);
\draw[dashed]  (-1,2.5)--(0,2);
\draw[dashed, blue!70!black](0,2)--(1.6,0);
\draw[dashed]  (-0.7,2.17)--(0,1.8);
\draw[dashed, blue!70!black](0,1.8)--(1.4,0);
\draw[dashed]  (-0.4,1.9)--(0,1.6);
\draw[dashed, blue!70!black](0,1.6)--(1.2,0);
\draw[dashed]  (-0.1,1.5)--(0,1.4);
\draw[dashed, blue!70!black](0,1.4)--(1,0);
\draw[dashed, blue!70!black] (0,1.2)--(0.82,0);
\draw[dashed, blue!70!black] (0,1)--(0.68,0);
\draw[dashed, blue!70!black] (0,0.8)--(0.55,0);
\draw[dashed, blue!70!black] (0,0.6)--(0.4,0);
\draw[dashed, blue!70!black] (0,0.4)--(0.25,0);
\draw[dashed, blue!70!black] (0,0.2)--(0.12,0);

\draw[dashed, blue!70!black] (0,1)--(-1.7,0);
\draw[dashed, blue!70!black] (0,0.8)--(-1.35,0);
\draw[dashed, blue!70!black] (0,0.6)--(-1.05,0);
\draw[dashed, blue!70!black] (0,0.4)--(-0.8,0);
\draw[dashed, blue!70!black] (0,0.2)--(-0.4,0);

\draw[dashed] (-3,4)--(0,3);
\draw[dashed, blue!70!black] (0,3)--(2.6,0);
\draw[dashed] (-2.4,4)--(0,3.2);
\draw[dashed, blue!70!black] (0,3.2)--(2.8,0);
\draw[dashed] (-1.8,4)--(0,3.4);
\draw[dashed, blue!70!black](0,3.4)--(3,0);
\draw[blue!70!black, very thick] (-1.2,4)--(0,3.6)--(3.2,0);
\draw[dashed] (-0.6,4)--(0,3.8)--(3.4,0);
\draw (0,4)--(3.6,0);
\draw[blue!70!black, very thick] (-1.2,4)--(0,3.6)--(3.2,0);
\draw[blue!70!black, very thick] (0,3.6)--(0,1.2);

\draw (0,4)--(3.6,0);

\draw[dotted] (-4.5, 1.2)--(4.5, 1.2)node[right]{$t_{\bar u_L}$};

\end{tikzpicture}
}
\caption{The solution $u^*$.}
\label{exampleu*}
\end{figure}

\begin{figure}
\centering
\scriptsize{
\begin{tikzpicture}[scale = 0.7]
\draw[->] (-4.5,0)--(4.5,0)node[right]{$x$};
\draw[->] (0,0)--(0,5)node[above]{$t$};
\draw[very thin] (-4.5,4)--(4.5,4);
\draw (4,4)node[above]{$\omega_1$};
\draw (-4,2)node[above]{$p$};
\draw (-1,2)node[above]{$A$};
\draw (3,2)node[above]{$\ol B$};
\draw (-3,4)node[above]{$L$};


\draw (-1.2, 4)node[above]{$\ol x$};

\draw[dashed, blue!70!black] (0,2.8)--(2.4,0);
\draw[dashed, blue!70!black](0,2.6)--(2.2,0);
\draw[dashed, blue!70!black](0,2.4)--(2,0);
\draw[dashed, blue!70!black](0,2.2)--(1.8,0);
\draw[dashed, blue!70!black](0,2)--(1.6,0);
\draw[dashed, blue!70!black](0,1.8)--(1.4,0);
\draw[dashed, blue!70!black](0,1.6)--(1.2,0);
\draw[dashed, blue!70!black](0,1.4)--(1,0);
\draw[dashed, blue!70!black] (0,1.2)--(0.82,0);
\draw[dashed, blue!70!black] (0,1)--(0.68,0);
\draw[dashed, blue!70!black] (0,0.8)--(0.55,0);
\draw[dashed, blue!70!black] (0,0.6)--(0.4,0);
\draw[dashed, blue!70!black] (0,0.4)--(0.25,0);
\draw[dashed, blue!70!black] (0,0.2)--(0.12,0);

\draw[red!70!black, thick] (-3,4)--(-1.5,0);

\draw[dashed, blue!70!black] (0,3)--(2.6,0);
\draw[dashed, blue!70!black] (0,3.2)--(2.8,0);
\draw[dashed, blue!70!black](0,3.4)--(3,0);
\draw[blue!70!black, very thick] (-1.2,4)--(0,3.6)--(3.2,0);
\draw[blue!70!black, very thick] (0,3.6)--(0,0);
\draw[dashed] (0,3.8)--(3.4,0);
\draw (0,4)--(3.6,0);

\end{tikzpicture}
}
\caption{The solution $u_1$.}
\label{exampleu1}
\end{figure}
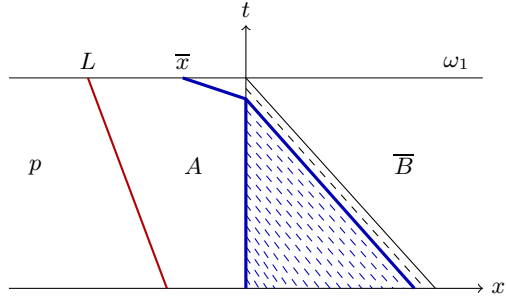

\begin{figure}
\centering
\scriptsize{
\begin{tikzpicture}[scale = 0.7]
\draw[->] (-4.5,0)--(4.5,0)node[right]{$x$};
\draw[->] (0,0)--(0,5)node[above]{$t$};
\draw[very thin] (-4.5,4)--(4.5,4);
\draw (4,4)node[above]{$\omega_1$};
\draw (-4,2)node[above]{$p$};
\draw (3,2)node[above]{$\ol B$};
\draw (-3,4)node[above]{$L$}--(-4,0);


\draw (-3,4)--(-2,0);
\draw[smooth, red!70!black, thick] plot coordinates{(-3,4)(-1.8,3.1)(-0.7,2.15)(-0.1,1.5)(0,1.2)};
\draw (-1.2, 4)node[above]{$\bar x$};
\draw[blue, very thick] (0,1.2)--(-2,0);
\draw[dashed] (-3,4)--(-3.8,0);
\draw[dashed] (-3,4)--(-3.6,0);
\draw[dashed] (-3,4)--(-3.4,0);
\draw[dashed] (-3,4)--(-3.2,0);
\draw[dashed] (-3,4)--(-3,0);
\draw[dashed] (-3,4)--(-2.8,0);
\draw[dashed] (-3,4)--(-2.6,0);
\draw[dashed] (-3,4)--(-2.4,0);
\draw[dashed] (-3,4)--(-2.2,0);

\draw[dashed] (-2,0)--(-2.6,3.7);
\draw[dashed] (-2,0)--(-2.2,3.4);
\draw[dashed] (-2,0)--(-1.8,3.1);
\draw[dashed] (-2,0)--(-1.4,2.8);
\draw[dashed] (-2,0)--(-1,2.5);
\draw[dashed] (-2,0)--(-0.7,2.17);
\draw[dashed] (-2,0)--(-0.4,1.9);
\draw[dashed] (-2,0)--(-0.1,1.5);

\draw[dashed]  (-2.6,3.7)--(0,2.8);
\draw[dashed, blue!70!black] (0,2.8)--(2.4,0);
\draw[dashed]  (-2.2,3.4)--(0,2.6);
\draw[dashed, blue!70!black](0,2.6)--(2.2,0);
\draw[dashed]  (-1.8,3.1)--(0,2.4);
\draw[dashed, blue!70!black](0,2.4)--(2,0);
\draw[dashed]  (-1.4,2.8)--(0,2.2);
\draw[dashed, blue!70!black](0,2.2)--(1.8,0);
\draw[dashed]  (-1,2.5)--(0,2);
\draw[dashed, blue!70!black](0,2)--(1.6,0);
\draw[dashed]  (-0.7,2.17)--(0,1.8);
\draw[dashed, blue!70!black](0,1.8)--(1.4,0);
\draw[dashed]  (-0.4,1.9)--(0,1.6);
\draw[dashed, blue!70!black](0,1.6)--(1.2,0);
\draw[dashed]  (-0.1,1.5)--(0,1.4);
\draw[dashed, blue!70!black](0,1.4)--(1,0);
\draw[very thick, blue] (0,1.2)--(0.82,0);

\draw[dashed] (-3,4)--(0,3);
\draw[dashed, blue!70!black] (0,3)--(2.6,0);
\draw[dashed] (-2.4,4)--(0,3.2);
\draw[dashed,blue!70!black] (0,3.2)--(2.8,0);
\draw[dashed] (-1.8,4)--(0,3.4);
\draw[dashed, blue!70!black](0,3.4)--(3,0);
\draw[blue!70!black, very thick] (-1.2,4)--(0,3.6)--(3.2,0);
\draw[dashed] (-0.6,4)--(0,3.8)--(3.4,0);
\draw (0,4)--(3.6,0);
\draw[blue!70!black, very thick] (-1.2,4)--(0,3.6)--(3.2,0);
\draw[blue!70!black, very thick] (0,3.6)--(0,1.2);

\draw (0,4)--(3.6,0);
\draw[dotted] (-4.5, 1.2)--(4.5, 1.2)node[right]{$t_{\bar u_L}$};


\draw[dashed] (-2,0)--(0,1);
\draw[dashed] (-2,0)--(0,0.8);
\draw (-2,0)--(0,0.6);
\draw[dashed] (-1.5,0)--(0,0.45);
\draw[dashed] (-1,0)--(0,0.3);
\draw[dashed] (-0.5,0)--(0,0.15);

\draw [dashed] (0,1)--(0.82,0);
\draw [dashed] (0,0.8)--(0.82,0);
\draw (0,0.6)--(0.82,0);
\draw [dashed](0,0.45)--(0.6,0);
\draw[dashed] (0,0.3)--(0.4,0);
\draw[dashed] (0,0.15)--(0.2,0);
\end{tikzpicture}
}
\caption{The solution $u_2$.}
\label{exampleu2}
\end{figure}
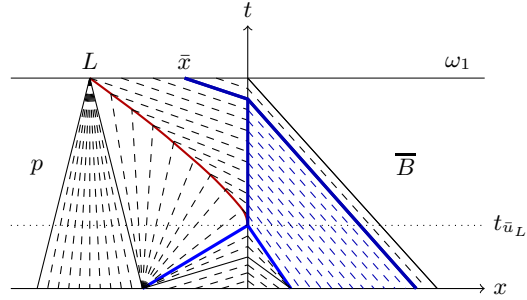

\begin{figure}
\centering
\scriptsize{
\begin{tikzpicture}[scale = 0.7]
\draw[->] (-4.5,0)--(4.5,0)node[right]{$x$};
\draw[->] (0,0)--(0,5)node[above]{$t$};
\draw[very thin] (-4.5,4)--(4.5,4);
\draw (4,4)node[above]{$\omega_1$};
\draw (-4,2)node[above]{$p$};
\draw (3,2)node[above]{$\ol B$};
\draw (-3,4)node[above]{$L$}--(-4,0);


\draw (-3,4)--(-2,0);
\draw[smooth,thick,  red!70!black, thick] plot coordinates{(-3,4)(-1.8,3.1)(-0.7,2.15)(-0.1,1.5)(0,1.2)};
\draw (-2.4, 4)node[above]{$\bar x$};
\draw[blue!70!black, very thick] (0,1.2)--(-2,0);
\draw[dashed] (-3,4)--(-3.8,0);
\draw[dashed] (-3,4)--(-3.6,0);
\draw[dashed] (-3,4)--(-3.4,0);
\draw[dashed] (-3,4)--(-3.2,0);
\draw[dashed] (-3,4)--(-3,0);
\draw[dashed] (-3,4)--(-2.8,0);
\draw[dashed] (-3,4)--(-2.6,0);
\draw[dashed] (-3,4)--(-2.4,0);
\draw[dashed] (-3,4)--(-2.2,0);

\draw[dashed] (-2,0)--(-2.6,3.7);
\draw[dashed] (-2,0)--(-2.2,3.4);
\draw[dashed] (-2,0)--(-1.8,3.1);
\draw[dashed] (-2,0)--(-1.4,2.8);
\draw[dashed] (-2,0)--(-1,2.5);
\draw[dashed] (-2,0)--(-0.7,2.17);
\draw[dashed] (-2,0)--(-0.4,1.9);
\draw[dashed] (-2,0)--(-0.1,1.5);

\draw[dashed]  (-2.6,3.7)--(0,2.8);

\draw[dashed]  (-2.2,3.4)--(0,2.6);
\draw[dashed]  (-1.8,3.1)--(0,2.4);

\draw[dashed]  (-1.4,2.8)--(0,2.2);
\draw[dashed]  (-1,2.5)--(0,2);
\draw[dashed, blue!70!black](0,2)--(1.6,0);
\draw[dashed]  (-0.7,2.17)--(0,1.8);
\draw[dashed, blue!70!black](0,1.8)--(1.4,0);
\draw[dashed]  (-0.4,1.9)--(0,1.6);
\draw[dashed, blue!70!black](0,1.6)--(1.2,0);
\draw[dashed]  (-0.1,1.5)--(0,1.4);
\draw[dashed, blue!70!black](0,1.4)--(1,0);
\draw[dashed, blue!70!black] (0,1.2)--(0.82,0);
\draw[dashed, blue!70!black] (0,1)--(0.68,0);
\draw[dashed, blue!70!black] (0,0.8)--(0.55,0);
\draw[dashed, blue!70!black] (0,0.6)--(0.4,0);
\draw[dashed, blue!70!black] (0,0.4)--(0.25,0);
\draw[dashed, blue!70!black] (0,0.2)--(0.12,0);

\draw[dashed, blue!70!black] (0,1)--(-1.7,0);
\draw[dashed, blue!70!black] (0,0.8)--(-1.35,0);
\draw[dashed, blue!70!black] (0,0.6)--(-1.05,0);
\draw[dashed, blue!70!black] (0,0.4)--(-0.8,0);
\draw[dashed, blue!70!black] (0,0.2)--(-0.4,0);

\draw[dashed] (-3,4)--(0,3);
\draw[very thick, blue!70!black] (-2.4,4)--(0,3.2);
\draw[dashed] (-1.8,4)--(0,3.4);
\draw[dashed] (-1.2,4)--(0,3.6);
\draw (0,3.6)--(3.2,0);
\draw[dashed] (-0.6,4)--(0,3.8)--(3.4,0);
\draw (0,4)--(3.6,0);
\draw[blue!70!black, very thick] (0,3.2)--(0,1.2);
\draw [very thick, blue] (0,2.2)--(1.75,0);
\draw (0,4)--(3.6,0);
\draw[dotted] (-4.5, 1.2)--(4.5, 1.2)node[right]{$t_{\bar u_L}$};

\draw[dashed] (0,3)--(0.1,3.1);
\draw[dashed] (0,2.8)--(0.18,2.98);
\draw[dashed] (0,2.6)--(0.25,2.85);
\draw[dashed] (0,2.4)--(0.2,2.6);

\draw[smooth, thick, red!70!black, thick] plot coordinates{(0,3.6)(0.1,3.2)(0.3,2.6)};
\draw[smooth,thick, red!70!black, thick] plot coordinates{(0,2.2)(0.1,2.4)(0.3,2.6)};
\draw (0.3,2.6)--(3.2,0);
\draw (0.3,2.6)--(1.75,0);

\end{tikzpicture}
}
\caption{The solution $u_3$.}
\label{exampleu3}
\end{figure}
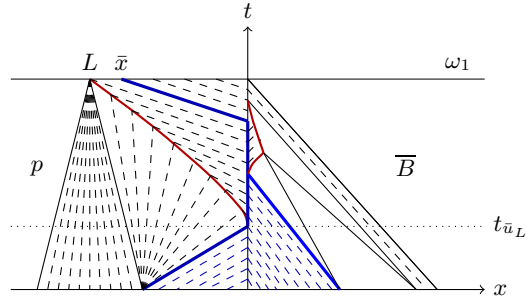

\medskip

    \subsection{Main results}\label{subsec:main-results}
    The first main result of the paper
    provides a characterization of the
    set $\mc I_T^{[AB]}(\omega^T)$
    in~\eqref{eq:initialdataintro}. 
    By the analysis in~\cite{talamini_ancona_attset}
     we know that if $\omega^T\in \mc A^{[AB]}(T)$,
     then the $AB$-entropy solution $u^*$
defined by~\eqref{eq:vertexinitialset}-\eqref{eq:sol-vertexinitialset} satisfies
$u^*(\cdot, T)=\omega^T$, which means that
     $u_0^*\in\mc I_T^{[AB]}(\omega^T)$.
    Our next Theorem gives a characterization of the possible elements $u_0\in\mc I_T^{[AB]}(\omega^T)$
    which are different from $u_0^*$.
     


\begin{thm}\label{initialdataid}
Given  $\omega^T \in \mc A^{[AB]}(T)$,
let $\mathcal{C}_0(u^*, x)$ denote the set defined in~\eqref{eq:ABchar-set0}
for the $AB$-entropy solution $u^*$
defined by~\eqref{eq:vertexinitialset}-\eqref{eq:sol-vertexinitialset},
and let $u_0 \in \mathbf L^{\infty}(\mathbb R)$.
Then $u_0 \in  \mc I_T^{[AB]}(\omega^T)$ if and only if for every point $\overline x\in\R$ 
there exists $\overline y \in \mathcal{C}_0(u^*,\overline x)$ such that
there hold
\begin{equation}
\label{condleq-2}
\int_y^{\overline y} u_0(x) \dif x \leq \int_y^{\overline y}u^*_0(x) \dif x, \qquad \forall~y < \min \mathcal{C}_0(u^*,\overline x)
\end{equation}
\indent and 
\begin{equation}
\label{condgeq-2}
\int_{\overline y}^y u_0(x) \dif x \geq \int_{\overline y}^yu_0^*(x) \dif x, \qquad \forall~y > \max \mathcal{C}_0(u^*,\overline x)
\end{equation}
\end{thm}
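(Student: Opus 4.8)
The plan is to characterize membership in $\mc I_T^{[AB]}(\omega^T)$ by transforming the identity $\mc S_T^{[AB]+}u_0 = \omega^T$ into a statement about the primitive $U_0(y)\doteq \int_0^y u_0(x)\dif x$ and its comparison with $U_0^*(y)\doteq \int_0^y u_0^*(x)\dif x$. The starting point is the observation that $u^*$ carries the ``extremal'' wave structure compatible with $\omega^T$: because $u_0^* = \mc S_T^{[AB]-}\omega^T$ is obtained through the backward solution operator, by the structural properties of the range of $\mc S_T^{[AB]-}$ established in \cite{talamini_ancona_attset} the solution $u^*$ contains no spurious shocks other than those \emph{forced} by $\omega^T$, so the $AB$-genuine/interface characteristics of $u^*$ fill up the maximal possible funnels. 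The key geometric device is, for each target point $\bar x$ and each $AB$-gic $\zeta\in\mc C(u^*,\bar x)$ with $\zeta(0)=\bar y$, the conservation-of-mass identity along the region swept between the extremal gic's: by the divergence theorem applied to \eqref{conslaw} on the curvilinear trapezoid bounded by two $AB$-gic's, the time axis pieces (where the flux equals $\gamma$, so the flux contribution is controlled) and the slice $t=T$, one gets that $\int_{\zeta_1(0)}^{\zeta_2(0)} u_0^*\,dx$ is rigidly determined by the values of $\omega^T$ between $\zeta_1(T)$ and $\zeta_2(T)$ plus a boundary term from the interface.

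\medskip

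First I would prove the \emph{necessity} of \eqref{condleq-2}–\eqref{condgeq-2}. Fix $u_0\in\mc I_T^{[AB]}(\omega^T)$, set $u(\cdot,t)=\mc S_t^{[AB]+}u_0$, and fix $\bar x$. Pick $\bar y$ to be the initial point of a genuine backward characteristic of $u$ emanating from $(\bar x,T)$ that, when continued backward, does \emph{not} cross the interface (or crosses it only as an $AB$-gic); such a $\bar y$ must lie in $\mc C_0(u^*,\bar x)$ because $u(\cdot,T)=u^*(\cdot,T)=\omega^T$ and the backward characteristics through a common point $(\bar x,T)$ of two solutions with the same terminal data have the same admissible slopes — here I would invoke the Ole\u\i nik inequality \eqref{eq:Oleinik} and the comparison principle to pin down $\mc C_0(u^*,\bar x)$ as the maximal admissible funnel. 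For $y<\min\mc C_0(u^*,\bar x)$, compare the mass swept by $u$ and by $u^*$ between $y$ and $\bar y$: the characteristic of $u$ from $(\bar x,T)$ lies to the right of all characteristics of $u$ issuing from $[y,\bar y]$ near $t=T$ (no characteristic of $u$ from the left of $\bar y$ can reach $\bar x$, else $\bar y$ would not be minimal in that solution's funnel), which forces an inequality of Ole\u\i nik type on the primitives; this is exactly \eqref{condleq-2}, and \eqref{condgeq-2} is the mirror image. The interface term is handled by condition (ii) of Definition~\ref{interfacecharacteristics}: while $\zeta\equiv 0$ the flux is $\gamma$ for \emph{both} $u$ and $u^*$, so the boundary contributions along $x=0$ cancel in the comparison.

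\medskip

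For \emph{sufficiency}, assume \eqref{condleq-2}–\eqref{condgeq-2} hold for every $\bar x$ and let $u(\cdot,t)=\mc S_t^{[AB]+}u_0$. I want $u(\cdot,T)=\omega^T$. Fix $\bar x$ and the corresponding $\bar y\in\mc C_0(u^*,\bar x)$. Using the Lax--Ole\u\i nik-type representation for the $AB$-entropy solution (away from the interface this is the classical variational formula; near the interface one uses the backward operator machinery of \cite{talamini_ancona_attset}), the value $u(\bar x,T\pm)$ is governed by the minimizer of a functional in $y$ of the form $\int_y^{\bar x}$-type cost plus $-U_0(y)$; the two inequalities say precisely that the competitor $y=\bar y$ is not beaten from the left (by \eqref{condleq-2}) nor from the right (by \eqref{condgeq-2}) by any $y$ outside $\mc C_0(u^*,\bar x)$, while inside $\mc C_0(u^*,\bar x)$ the cost is flat because those $y$'s all reach $\bar x$ along $AB$-gic's of $u^*$. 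Hence the minimizer for $u$ sits in $\mc C_0(u^*,\bar x)$, which yields $u(\bar x,T)=u^*(\bar x,T)=\omega^T(\bar x)$ at every point of continuity, and by the $\mathbf L^1_{\mathrm{loc}}$ continuity of $t\mapsto u(\cdot,t)$ and the at-most-countability of discontinuities of $\omega^T$ this gives $u(\cdot,T)=\omega^T$ in $\mathbf L^\infty$.

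\medskip

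The main obstacle I anticipate is the bookkeeping at the interface $x=0$: one must show that the genuine characteristic funnel $\mc C_0(u^*,\bar x)$ really is the \emph{universal} obstruction set — i.e., that every admissible $u_0$ must route the mass between $\min\mc C_0$ and $\max\mc C_0$ in a way consistent with $u^*$, including the subtle cases (illustrated by $u_2$ and $u_3$ in \S\ref{subsec:examples}) where $\mc C_0(u^*,\bar x)$ is not an interval or where competing solutions shrink $\max\mc C_0$. Making the divergence-theorem/mass-balance argument rigorous when the boundary of the curvilinear trapezoid includes interface segments carrying shocks emerging from or absorbed by $x=0$, and reconciling it with the variational representation, is where the properties of the $AB$-backward-forward operator from \cite{talamini_ancona_attset} and the elementary structure of $AB$-gic's from \S\ref{sec:charmain} will have to be used most carefully.
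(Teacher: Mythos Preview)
Your overall architecture (divergence theorem on curvilinear trapezoids bounded by $AB$-gic's, plus a flux comparison along each gic) is the paper's architecture. But two of your load-bearing claims are either wrong or unjustified, and the sufficiency half takes a route the paper does not.

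\textbf{Necessity: the choice of $\bar y$.} You assert that the foot of a backward $AB$-gic of $u$ through $(\bar x,T)$ automatically lies in $\mc C_0(u^*,\bar x)$ ``because $u(\cdot,T)=u^*(\cdot,T)$'' and an Ole\u\i nik/comparison argument. That is not enough: the slope of a backward characteristic at time $T$ is indeed fixed by $\omega^T(\bar x\pm)$, but after it meets the interface its continuation depends on the \emph{traces} $u_l(t),u_r(t)$ along $x=0$, which are solution-dependent. The paper spends an entire proposition (Proposition~\ref{Clemma}) establishing $\mc C(u^*,x)\cap\mc C(u,x)\neq\emptyset$, via a case analysis on $\ms L,\ms R$, the explicit structure of $u^*$ from \cite{talamini_ancona_attset}, and a blow-up argument to pin down $u_l(\bar\tau)$; this is the genuinely nontrivial step and your proposal does not supply it.

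\textbf{Necessity: the interface cancellation.} You write that while $\zeta\equiv 0$ the flux equals $\gamma$ for \emph{both} $u$ and $u^*$, so the boundary terms cancel. That is false: along an $AB$-gic of $u^*$ one has $f_l(u^*_l)=f_r(u^*_r)=\gamma$, but for a generic $u$ only the interface constraint $f_l(u_l)=f_r(u_r)\ge\gamma$ holds. It is precisely this \emph{inequality} (together with the convexity inequality $f(v)-f'(w)v\ge f(w)-f'(w)w$ off the interface) that yields the one-sided flux comparison $\mathcal F_t(\zeta,u)\ge\mathcal F_t(\zeta,u^*)$ of Lemma~\ref{Flemma}; equality holds iff $\zeta$ is also a gic for $u$. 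With cancellation replaced by this inequality, your mass-balance argument becomes correct and coincides with the paper's.

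\textbf{Sufficiency.} The paper does \emph{not} use a Lax--Ole\u\i nik representation. It proves directly that $\int_{x_1}^{x_2}(\omega^T-\mc S_T^{[AB]+}u_0)\,dx\ge 0$ (and then $\le 0$) for all $x_1<x_2$, by taking $\zeta_1\in\mc C(u,x_1)$, $\zeta_2\in\mc C(u^*,x_2)$ and splitting into (a) the case where $\zeta_1,\zeta_2$ meet in $[0,T)$, where two applications of Lemma~\ref{Curves} plus Lemma~\ref{Flemma} give the sign without using \eqref{condleq-2}--\eqref{condgeq-2} at all, and (b) the case where they do not meet, where \eqref{condleq-2} supplies the missing inequality on $\int_{y}^{y_2}(u_0-u_0^*)$. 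Your variational route could perhaps be made to work using the Hopf--Lax type formulas of Adimurthi--Mishra--Gowda, but you would still need the analogue of Proposition~\ref{Clemma} to identify the minimizer set with $\mc C_0(u^*,\bar x)$, so you do not avoid the hard step.
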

\medskip

\begin{remark}
Given  $\omega^T \in \mc A^{[AB]}(T)$, one can verify that the set of initial data $\mathcal{I}_T^{[AB]}(\omega^T)$
shares the same topological properties
enjoyed by the  set
of initial data leading at time $T$ to an attainable profile for a conservation laws with uniformly convex flux independent on the space variable
(see~\cite[Proposition~5.1]{COLOMBO2020}).
Namely, 
with respect to the $\mathbf{L}^1_{\mathrm{loc}}$ topology, 
    we have: 
    \begin{enumerate}
        \item[(i)] for every $M>0$, the set $\mathcal{I}_T^{[AB]}(\omega^T) \cap \{u_0 : \left\Vert u_0\right\Vert_{\mathbf{L}^{\infty}} \leq M\}$ is closed, and $\mathcal{I}_T^{[AB]}(\omega^T)$ is an $F_{\sigma}$ set; 
\item[(ii)] the set $\mathcal{I}_T^{[AB]}(\omega^T)$ 
has empty interior.
    \end{enumerate}
    The first property follows immediately by the $\bf L^1$-contractivity  of the semigroup  of $AB$-entropy solutions.
    Concerning property (ii), consider two points $0<x_1<x_2$
    of continuity for $\omega^T$,  such that the classical genuine characteristics $\vartheta_{x_1}, \vartheta_{x_2} :[0,T]\to \R$
    for $u_t+f_r(u)_x=0$,
    passing at time $T$ through $x_1, x_2$, respectively, never cross the interface $x=0$.
    Let $u^*$ be the $AB$-entropy solution defined in~\eqref{eq:sol-vertexinitialset}.
    Then, by Remark~\ref{rem:oncharact}, $\vartheta_{x_1}, \vartheta_{x_2}$ are the unique $AB$-gic for $u^*$  that reach at time~$T$ the points
    $x_1, x_2$, respectively. By definition~\eqref{eq:ABchar-set0} this means that  
     $\mathcal{C}_0(u^*,x_i) = \{\vartheta_{x_i}(0)\}$, $i=1,2$.
    Note that, by the non crossing property of genuine characteristics, we have
  $\theta_{x_1}(0) < \theta_{x_2}(0)$. Next,
  applying the inequality~\eqref{condleq} for $\ol x = x_2, y = \vartheta_{x_1}(0)$, we find that any element $u_0\in\mathcal{I}_T^{[AB]}(\omega^T)$ satisfies
\begin{equation}\label{thetacondleq}
\int_{\vartheta_{x_1}(0)}^{\vartheta_{x_2}(0)}u_0(x) \dif x \leq \int_{\vartheta_{x_1}(0)}^{\vartheta_{x_2}(0)}u_0^*(x) \dif x
\end{equation}
On the other hand, 
applying the inequality~\eqref{condgeq} for $\ol x = x_1, y = \vartheta_{x_2}(0)$, we find that any element $u_0\in \mathcal{I}_T^{[AB]}(\omega^T)$ satisfies
\begin{equation}\label{thetacondgeq}
\int_{\vartheta_{x_1}(0)}^{\vartheta_{x_2}(0)}u_0(x) \dif x \geq \int_{\vartheta_{x_1}(0)}^{\vartheta_{x_2}(0)}u_0^*(x) \dif x\,.
\end{equation}
The inequalities \eqref{thetacondleq}, \eqref{thetacondgeq} together imply that every element $u_0 \in \mathcal{I}_T^{[AB]}(\omega^T)$ satisfies
\begin{equation}\label{thetacondeq}
\int_{\vartheta_{x_1}(0)}^{\vartheta_{x_2}(0)}u_0(x) \dif x = \int_{\vartheta_{x_1}(0)}^{\vartheta_{x_2}(0)}u_0^*(x) \dif x.
\end{equation}
Then, letting $G: \mathbf L^{\infty}(\R)\to \R$
be the linear map defined by $G(u_0)=\int_{\vartheta_{x_1}(0)}^{\vartheta_{x_2}(0)}u_0(x) \dif x$,
we deduce from~\eqref{thetacondeq}
that
\begin{equation*}
  \mathcal{I}_T^{[AB]}(\omega^T)\subset \{u_0\in  \mathbf L^{\infty}(\R)\:|\: G(u_0)=G(u_0^*),
   \}
\end{equation*}
which shows that $\mathcal{I}_T^{[AB]}(\omega^T)$ has an empty interior since
is is contained 
in an hyperplane of~${\bf L}^{\infty}(\R)$).
\end{remark}

The second main contribution of this paper establishes some structural and geometrical 
properties of the set
$\mc I_T^{[AB]}(\omega^T)$.

\begin{thm}\label{geometrical properties}
Given  $\omega^T \in \mc A^{[AB]}(T)$, 
with the same notations of Theorem~\ref{initialdataid}
the following properties hold.
\begin{enumerate}

\item[(i)] The set $\mathcal{I}_T^{[AB]}(\omega^T)$ reduces to the singleton 
$\{u_0^*\}$
if and only if $\left|\mathcal{C}_0(u^*,x)\right| = 1$ 
for every $x \in \mathbb R$. In particular, if $\mathcal{I}_T^{[AB]}(\omega^T)=\{u_0^*\}$
then $\omega^T$ is continuous
on $\R\setminus\{0\}$.

\item[(ii)] The set $\mathcal{I}_T^{[AB]}(\omega^T)$ is an affine cone having $u_0^*$ as its vertex (i.e. the set $\mathcal{I}_T^{[AB]}(\omega^T)-u_0^*$ is a linear cone).
Moreover, $u_0^*$ is the unique extremal point of 
the set $\mathcal{I}_T^{[AB]}(\omega^T)$. 

\item[(iii)] If, setting
\begin{equation}
   \label{eq:LR-def}
    \begin{aligned}
    \ms L\doteq \ms L[\omega^T, f_l] &\doteq  \sup \big\{ L < 0 \; : \; x-T \cdot f_l^{\prime}(\omega^T(x)) \leq 0 \quad \forall \; x \leq L\big\},
    \\
    \noalign{\smallskip}
    \ms R\doteq \ms R[\omega^T, f_r] &\doteq  \inf \big\{ R > 0 \; : \; x-T \cdot f_r^{\prime}(\omega^T(x)) \geq 0  \quad \forall \; x \geq R\big\},
\end{aligned}
\end{equation}
and
\begin{equation}
\mathcal{X}
\doteq 
\mathcal{X}(\omega^T) 
\doteq 
\Big\{
x\in\R
\;\big|\; 
\ \left|\mathcal{C}_0(u^*, x)\right| = 1\Big\},
\end{equation}
for every point $\overline x\in\,]\ms L, \ms R[\,$ of continuity  of $\omega^T$ there holds 
\begin{equation}
\label{refinedconditionconvex}
\mathcal{C}_0(u^*, \bar x) \cap \mathrm{cl}\left(\bigcup_{\,x \in  \mathcal{X}} \mc C_0(u^*,x)\right) \neq \emptyset, 
\end{equation}
then,  the set $\mathcal{I}_T^{[AB]}(\omega^T)$ is convex.
\end{enumerate}
\end{thm}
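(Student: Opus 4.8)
\textbf{Proof strategy for Theorem~\ref{geometrical properties}.}

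The plan is to derive all three statements from the integral characterization in Theorem~\ref{initialdataid} together with the structural properties of $AB$-gics established in \S\ref{sec:charmain} and the properties of $\mc S^{[AB]-}_T$ from~\cite{talamini_ancona_attset}. For part (i), I would argue as follows: if $|\mc C_0(u^*,x)|=1$ for every $x$, say $\mc C_0(u^*,x)=\{y(x)\}$, then for $\overline y=y(\overline x)$ the inequalities~\eqref{condleq-2}, \eqref{condgeq-2} must hold with $y$ ranging over $\{y<y(\overline x)\}$ and $\{y>y(\overline x)\}$ respectively; letting $\overline x$ vary over a dense set of continuity points and using that $x\mapsto y(x)$ is monotone and has dense range (a consequence of Remark~\ref{rem:oncharact} and the no-crossing property of genuine characteristics), one forces $\int_a^b u_0=\int_a^b u_0^*$ on a dense family of intervals, hence $u_0=u_0^*$ a.e. Conversely, if some $\mc C_0(u^*,\overline x)$ contains an interval $[\alpha,\beta]$ with $\alpha<\beta$, I would construct $u_0\neq u_0^*$ satisfying~\eqref{condleq-2}, \eqref{condgeq-2} by modifying $u_0^*$ inside $[\alpha,\beta]$ in a way that preserves the total mass on that interval and does not violate the one-sided constraints (this is essentially the construction underlying part (ii)). The last sentence of (i) follows because a jump of $\omega^T$ at a point $\overline x\neq 0$ is created by a shock, and Remark~\ref{rem:oncharact} then yields a nondegenerate interval of initial points of $AB$-gics reaching $\overline x$.

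For part (ii), the affine-cone property is a direct consequence of the characterization: the constraints~\eqref{condleq-2}, \eqref{condgeq-2} read $\int_y^{\overline y}(u_0-u_0^*)\dif x\le 0$ and $\int_{\overline y}^y(u_0-u_0^*)\dif x\ge 0$, so writing $v\doteq u_0-u_0^*$ the set $\mc I_T^{[AB]}(\omega^T)-u_0^*$ is described by: for every continuity point $\overline x$ there is $\overline y\in\mc C_0(u^*,\overline x)$ with $\int_y^{\overline y}v\le 0$ for all $y<\min\mc C_0(u^*,\overline x)$ and $\int_{\overline y}^y v\ge 0$ for all $y>\max\mc C_0(u^*,\overline x)$; this condition is invariant under $v\mapsto \lambda v$ for $\lambda\ge 0$ (the choice of $\overline y$ may depend on $\overline x$ but not on $\lambda$), so it defines a linear cone. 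To see that $u_0^*$ is the unique extremal point, I would note that any $u_0\in\mc I_T^{[AB]}(\omega^T)$ with $u_0\neq u_0^*$ admits a nontrivial segment through it contained in the set: if $\overline x$ is a continuity point where the relevant $\overline y$ makes one of the inequalities strict on a set of positive measure (which must happen since $v\not\equiv 0$), one can perturb $u_0$ by $\pm\epsilon w$ for a suitable localized $w$ and small $\epsilon$ without leaving the cone, so $u_0$ is not extremal; and $u_0^*$ is extremal since $v=0$ is the only point of the cone on the line through $0$ and $-v$ when $v\neq 0$ would violate a constraint in the opposite direction. I expect this last extremality argument to require a careful choice of the perturbation direction respecting the signs in~\eqref{condleq-2}, \eqref{condgeq-2}.

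For part (iii), the idea is that convexity fails in general precisely because the selection of $\overline y\in\mc C_0(u^*,\overline x)$ in Theorem~\ref{initialdataid} may be forced to be inconsistent between two points $u_0^1,u_0^2$ of the set; the hypothesis~\eqref{refinedconditionconvex} rules this out. Concretely, for $x$ outside $[\ms L,\ms R]$ the relevant $AB$-gic is unique (the characteristic does not interact with the interface or with shocks generated there, by the definitions~\eqref{eq:LR-def} of $\ms L,\ms R$), so $\mc C_0(u^*,x)$ is a singleton and the constraints are linear there; for $x\in\,]\ms L,\ms R[$ a continuity point, hypothesis~\eqref{refinedconditionconvex} guarantees one can choose $\overline y$ in $\mc C_0(u^*,\overline x)$ lying in the closure of the union of the singleton-sets $\mc C_0(u^*,x)$, $x\in\mc X$, and hence this choice is \emph{forced} and common to all elements of $\mc I_T^{[AB]}(\omega^T)$ (because near such a $\overline y$ the constraints coming from nearby $x\in\mc X$ pin down the integral of $u_0$, exactly as in the Remark preceding the theorem). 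Once the choice of $\overline y$ is the same for all $u_0$ in the set and for every relevant $\overline x$, the defining conditions become a system of linear inequalities in $u_0$, so $\mc I_T^{[AB]}(\omega^T)$ is an intersection of half-spaces, hence convex. The main obstacle I anticipate is making rigorous the claim that~\eqref{refinedconditionconvex} forces a canonical, $u_0$-independent choice of $\overline y$ for every continuity point $\overline x\in\,]\ms L,\ms R[$ — this requires a continuity/monotonicity argument for the map $x\mapsto\mc C_0(u^*,x)$ (in the Hausdorff sense) together with the density of $\mc X$ in $\,]\ms L,\ms R[$ and an approximation of the integral constraint at $\overline x$ by constraints at points of $\mc X$ accumulating at the endpoints of $\mc C_0(u^*,\overline x)$.
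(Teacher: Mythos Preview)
Your strategy is essentially the same as the paper's, and the key ideas are all correct. Two places deserve tightening.

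For the extremality part of (ii), your perturbation argument is unnecessarily delicate and slightly off. Once you know $\mc I_T^{[AB]}(\omega^T)-u_0^*$ is a cone, any $u_0\neq u_0^*$ is automatically non-extremal: it is the midpoint of $u_0^*$ and $u_0^*+2(u_0-u_0^*)$, both of which lie in the set by the cone property. So no $\pm\epsilon w$ perturbation is needed. What does require work is showing that $u_0^*$ itself is extremal, i.e.\ that the cone is pointed. The paper does this by assuming $u_0^*=\lambda u_{0,1}+(1-\lambda)u_{0,2}$ with $u_{0,i}\neq u_0^*$, picking a point $\overline x$ where $\mc C_0(u^*,\overline x)=\{\overline y\}$ is a singleton (such points always exist, e.g.\ continuity points outside $[\ms L,\ms R]$), and observing that the sign constraints~\eqref{condleq-2}--\eqref{condgeq-2} force both $\int_{\overline y}^y(u_{0,i}-u_0^*)$ to have the same sign for every $y$, which is incompatible with their convex combination being zero unless both vanish identically. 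Your sentence ``$-v$ would violate a constraint in the opposite direction'' is the right intuition but needs this anchoring at singleton points to become a proof.

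For (iii), your outline is right, but two remarks. First, you do not need (and are not given) density of $\mc X$ in $\,]\ms L,\ms R[\,$; what hypothesis~\eqref{refinedconditionconvex} gives you directly is, for each continuity point $\overline x$, a point $\overline y\in\mc C_0(u^*,\overline x)$ that is a limit of points $\overline y_n\in\mc C_0(u^*,x_n)$ with $x_n\in\mc X$. Second, the tool that makes the limiting argument work is not Hausdorff continuity of $x\mapsto\mc C_0(u^*,x)$ but the closed-graph property (Proposition~\ref{lemma:Cnotempty}(iii)): since each $\mc C_0(u^*,x_n)$ is the singleton $\{\overline y_n\}$, the constraints~\eqref{condleq-2}--\eqref{condgeq-2} at $x_n$ pin down $\overline y_n$ for \emph{every} $u_0$ in the set, and passing to the limit yields the same $\overline y$ for all of them. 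After that, convexity is immediate because the defining conditions become a fixed system of linear inequalities.
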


Theorem~\ref{initialdataid},
together with Theorem~\ref{geometrical properties}-(ii) and 
Example in \S~\ref{exmp:nonconvex} (showing that 
    the set $\mc I_T^{[AB]}(\omega^T)$ can well be non convex if condition~\eqref{refinedconditionconvex} is not verified), yield Theorem~\ref{thm:main} stated in the Introduction.

\begin{remark}
    Note that
    the stronger condition 
    \begin{equation}
    \label{eq:refinedconditionconvex-2}
        \left|\mathcal{C}_0(u^*, \overline x)\right| = 1\qquad\text{for every point $\overline x\in\,]\ms L, \ms R[\,$ of continuity of $\omega^T$,}
    \end{equation}
    clearly implies~\eqref{refinedconditionconvex}
    and thus ensures the convexity of $\mc I_T^{[AB]}(\omega^T)$. Actually, we will first show that the set $\mc I_T^{[AB]}(\omega^T)$
    is convex under condition~\eqref{eq:refinedconditionconvex-2}.
    Next, we will extend the result to the case where~\eqref{refinedconditionconvex}
    is verified at every point $\overline x\in\,]\ms L, \ms R[\,$ of continuity  of $\omega^T$.
\end{remark}

\begin{remark}\label{rem:nofinerpart}
    In~\cite[Theorems 4.2, 4.7, 4.9]{talamini_ancona_attset} it is shown that the attainable set $\mc A^{[AB]}(T)$ can be partitioned in classes of attainable profiles $\omega^T$
    which depend on the quantities 
    $\ms L, \ms R$ defined in~\eqref{eq:LR-def}
    and on the relative positions of $f'_l(A)/T$ with respect to $\ms L$,
    or of $f'_r(B)/T$ with respect to $\ms R$.
    These classes of profiles do not provide a finer partition 
    than the one given by the two sets
    \begin{equation*}
        \big\{\omega\in \mc A^{[AB]}(T)\;|\; \mathcal{I}_T^{[AB]}(\omega)\ \ \text{is convex}\big\},\qquad\
        \big\{\omega\in \mc A^{[AB]}(T)\;|\; \mathcal{I}_T^{[AB]}(\omega)\ \ \text{is not convex}\big\}.
    \end{equation*}
    In fact, there are profiles $\omega_2, \omega_3 \in\mc A^{[AB]}(T)$ that belong to one same class of attainable profiles 
    described in~\cite{talamini_ancona_attset}, but such that
    $\mathcal{I}_T^{[AB]}(\omega_2)$ is convex
    while $\mathcal{I}_T^{[AB]}(\omega_3)$ is not convex.
    For example, 
   we
    consider the profile
     defined in \S~\ref{subsec:examples},
    but replacing $p$ with $\bs v$, i.e. setting
    \begin{equation}
        \label{eq:profile-ex1}
        \omega_2 (x) = \begin{cases}
        \bs v
        & x < L_0, \\
        A & x  \in \,]\,L_0, 0[\,, 
        \\
        \ol B  & x > 0.
    \end{cases}
    \end{equation}
    As observed in \S~\ref{subsec:examples}
    we have 
    $\ms R= \ms R[\omega_2, f_r]=0$,
    and $\ms L= \ms L[\omega_2, f_l]=L_0$.
    One can readly verify that $\omega_2$ fulfills the conditions (i)-(ii) of~\cite[Theorem 4.7]{talamini_ancona_attset}, as does 
    the profile $\omega_3$
    in~\eqref{eq:profile-nonconvex-initialset} considered in Example of \S~\ref{exmp:nonconvex}.
    We will show in \S~\ref{exmp:nonconvex}
    that the set of initial data $\mathcal{I}_T^{[AB]}(\omega_3)$ is not convex. On the other hand, we will see here
    that, setting 
    \begin{equation}
    \label{eq:indatum-entr-sol-omega1}
        u_0^* \doteq  \mc S^{[AB]-}_T \omega_2,\qquad\quad  u^*(\cdot,t) \doteq \mc S^{[AB]+}_t u^*_0 \qquad \forall~t \in [0,T]\,,
    \end{equation}
    at every point $\ol x\in\,]L_0, 0[$\,  there holds~\eqref{refinedconditionconvex}.
    Thus, the set $\mathcal{I}_T^{[AB]}(\omega_2)$ is convex because of Theorem~\ref{geometrical properties}-(iii).

    In order to determine the sets $\mc C_0(u^*,x)$, $x\in\R$
    (and then check~\eqref{refinedconditionconvex}),
    we construct explicitly the $AB$-entropy solution $u^*$ defined in~\eqref{eq:indatum-entr-sol-omega1}, following the procedure described in~\cite[\S~5.4]{talamini_ancona_attset}.
    Namely, because of condition~\eqref{eq:ex1-constraint-12} the solution $u^*$ contains a 
    shock curve  starting at the interface $x=0$,
    and then lying in the semiplane $\{x<0\}$,
    which reaches the point $x=L_0$ at the time $T$. In fact,
    according with the analysis in~\cite[\S~3.5]{talamini_ancona_attset},
    there exist a constant  $\bs\sigma\doteq \bs \sigma[\,L_0, A, f_l]$, and a map
    $\gamma : [\bs\sigma, T]\to \,]-\infty, 0]$,  
    with the properties that $\gamma(\bs\sigma)=0$,
    $\gamma(T)=L_0$, and that $t\to (\gamma(t), t)$ is a shock curve for the conservation law
    $u_t+f_l(u)_x=0$, which connects 
    the left states 
    $(f^{\prime}_l)^{-1}\big(\big({\gamma(t)-L_0+T \cdot f^{\prime}_l(\bs v
    )}\big)/{t}\big)$, $t\in [\bs\sigma, T]$,
    with the right state~$A$.
    On the left of $\gamma(t)$ there is a rarefaction wave, connecting the left state 
    $\bs v$ with the right state $\ol A$,
    and centered at the point $(L_0-T \cdot f^{\prime}_l(\bs v), 0)$. Moreover, there holds
    \begin{equation}
    \label{eq:sigma-id-1}
        \bs \sigma=\dfrac{T\cdot f'_l(\bs v)-L_0}{f'_l(\,\ol A)}.
    \end{equation}
    Then,
    setting
    \begin{equation*}
    \begin{aligned}
        \eta_-(t)&\doteq L_0-(T-t) \cdot f^{\prime}_l(\bs v),\quad t\in [0,T],
        \\
        \noalign{\smallskip}
        \eta_+(t)&\doteq L_0-T \cdot f^{\prime}_l(\bs v)+ t \cdot f'_l(\,\ol A),
        \quad t\in [0,\bs \sigma],
    \end{aligned}
    \end{equation*}
    we find that the function $u^*$ in~\eqref{eq:indatum-entr-sol-omega1} is given by (see Figure \ref{fig:nofinerpart})

    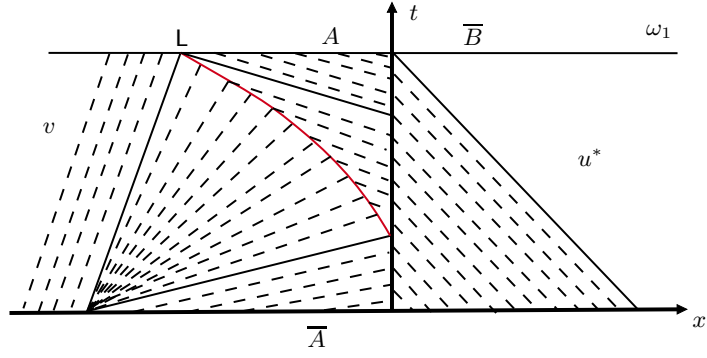
\begin{figure}
        \centering

\tikzset{every picture/.style={line width=0.75pt}} 

\begin{tikzpicture}[x=0.75pt,y=0.75pt,yscale=-0.7,xscale=0.7]

\draw [line width=1.5]    (335.59,254.96) -- (335.59,36.2) ;
\draw [shift={(335.59,32.2)}, rotate = 90] [fill={rgb, 255:red, 0; green, 0; blue, 0 }  ][line width=0.08]  [draw opacity=0] (6.97,-3.35) -- (0,0) -- (6.97,3.35) -- cycle    ;
\draw [line width=1.5]    (60,256) -- (544,253.36) ;
\draw [shift={(548,253.33)}, rotate = 179.69] [fill={rgb, 255:red, 0; green, 0; blue, 0 }  ][line width=0.08]  [draw opacity=0] (6.97,-3.35) -- (0,0) -- (6.97,3.35) -- cycle    ;
\draw    (88.13,68.46) -- (541.33,68.67) ;
\draw  [dash pattern={on 4.5pt off 4.5pt}]  (324.16,69.29) -- (336.53,71.87) ;
\draw  [dash pattern={on 4.5pt off 4.5pt}]  (217.67,89.02) -- (118,251.33) ;
\draw  [dash pattern={on 4.5pt off 4.5pt}]  (197.67,77.02) -- (116,254.67) ;
\draw  [dash pattern={on 4.5pt off 4.5pt}]  (278.33,133.69) -- (116,254.67) ;
\draw  [dash pattern={on 4.5pt off 4.5pt}]  (263.67,119.69) -- (116,254.67) ;
\draw  [dash pattern={on 4.5pt off 4.5pt}]  (249.67,108.35) -- (116,254.67) ;
\draw  [dash pattern={on 4.5pt off 4.5pt}]  (234.33,97.69) -- (118,251.33) ;
\draw    (183.33,68.35) -- (116,254.67) ;
\draw    (334.33,200.35) -- (116,254.67) ;
\draw [color={rgb, 255:red, 208; green, 2; blue, 27 }  ,draw opacity=1 ]   (183.33,68.35) .. controls (239.67,101.02) and (243,101.69) .. (271,125.02) .. controls (299,148.35) and (317.67,171.02) .. (334.33,200.35) ;
\draw  [dash pattern={on 4.5pt off 4.5pt}]  (276.33,129.02) -- (334,150.69) ;
\draw  [dash pattern={on 4.5pt off 4.5pt}]  (249.67,108.35) -- (331.67,138.35) ;
\draw  [dash pattern={on 4.5pt off 4.5pt}]  (319,176.35) -- (336.33,182.35) ;
\draw  [dash pattern={on 4.5pt off 4.5pt}]  (291.67,143.02) -- (116,254.67) ;
\draw  [dash pattern={on 4.5pt off 4.5pt}]  (217.67,89.02) -- (334,127.69) ;
\draw  [dash pattern={on 4.5pt off 4.5pt}]  (309,161.69) -- (335.67,171.69) ;
\draw  [dash pattern={on 4.5pt off 4.5pt}]  (295.67,146.35) -- (333,160.35) ;
\draw  [dash pattern={on 4.5pt off 4.5pt}]  (303.67,157.69) -- (116,254.67) ;
\draw  [dash pattern={on 4.5pt off 4.5pt}]  (315,172.35) -- (118,251.33) ;
\draw  [dash pattern={on 4.5pt off 4.5pt}]  (325,185.02) -- (116,254.67) ;
\draw  [dash pattern={on 4.5pt off 4.5pt}]  (286.67,254) -- (336.53,246.87) ;
\draw  [dash pattern={on 4.5pt off 4.5pt}]  (228,254.67) -- (334.33,235.69) ;
\draw  [dash pattern={on 4.5pt off 4.5pt}]  (148.67,255.33) -- (333.67,212.35) ;
\draw  [dash pattern={on 4.5pt off 4.5pt}]  (188,255.33) -- (332.33,224.35) ;
\draw  [dash pattern={on 4.5pt off 4.5pt}]  (296.72,68.92) -- (335.2,79.2) ;
\draw  [dash pattern={on 4.5pt off 4.5pt}]  (265.85,68.74) -- (335.2,87.2) ;
\draw  [dash pattern={on 4.5pt off 4.5pt}]  (207.67,68.35) -- (335.2,107.2) ;
\draw  [dash pattern={on 4.5pt off 4.5pt}]  (241.2,69.87) -- (333.67,97.69) ;
\draw  [dash pattern={on 4.5pt off 4.5pt}]  (336.53,113.87) -- (470.67,254) ;
\draw  [dash pattern={on 4.5pt off 4.5pt}]  (337.87,99.87) -- (484.67,254) ;
\draw  [dash pattern={on 4.5pt off 4.5pt}]  (337.2,83.87) -- (500,254.67) ;
\draw  [dash pattern={on 4.5pt off 4.5pt}]  (334,127.69) -- (454.67,253.33) ;
\draw    (335.33,67.33) -- (513.33,254) ;
\draw  [dash pattern={on 4.5pt off 4.5pt}]  (132,69.2) -- (70,252.67) ;
\draw  [dash pattern={on 4.5pt off 4.5pt}]  (335.59,143.58) -- (441.33,254.67) ;
\draw  [dash pattern={on 4.5pt off 4.5pt}]  (335.1,159.44) -- (424.67,254) ;
\draw  [dash pattern={on 4.5pt off 4.5pt}]  (145.33,69.53) -- (80,258) ;
\draw  [dash pattern={on 4.5pt off 4.5pt}]  (157.33,68.2) -- (91.26,255.58) ;
\draw  [dash pattern={on 4.5pt off 4.5pt}]  (170,68.87) -- (104.67,252) ;
\draw  [dash pattern={on 4.5pt off 4.5pt}]  (336.67,239.87) -- (350.67,254) ;
\draw  [dash pattern={on 4.5pt off 4.5pt}]  (336.67,223.87) -- (364,255.33) ;
\draw  [dash pattern={on 4.5pt off 4.5pt}]  (335.33,207.2) -- (380,256) ;
\draw  [dash pattern={on 4.5pt off 4.5pt}]  (337.33,192.53) -- (394,253.33) ;
\draw  [dash pattern={on 4.5pt off 4.5pt}]  (336,175.2) -- (410.67,256) ;
\draw    (183.33,68.35) -- (336.53,113.87) ;

\draw (177.33,49.73) node [anchor=north west][inner sep=0.75pt]  [font=\footnotesize]  {\scriptsize{$\mathsf{L}$}};
\draw (516.67,46.07) node [anchor=north west][inner sep=0.75pt]  [font=\footnotesize]  {\scriptsize{$\omega _{1}$}};
\draw (550,256.73) node [anchor=north west][inner sep=0.75pt]  [font=\footnotesize]  {\scriptsize{$x$}};
\draw (346,31.73) node [anchor=north west][inner sep=0.75pt]  [font=\footnotesize]  {\scriptsize{$t$}};
\draw (282.67,48.4) node [anchor=north west][inner sep=0.75pt]  [font=\footnotesize]  {\scriptsize{$A$}};
\draw (384.67,45.4) node [anchor=north west][inner sep=0.75pt]  [font=\footnotesize]  {\scriptsize{$\overline{B}$}};
\draw (272.67,263.4) node [anchor=north west][inner sep=0.75pt]  [font=\footnotesize]  {\scriptsize{$\overline{A}$}};
\draw (82,116.6) node [anchor=north west][inner sep=0.75pt]    {\scriptsize{$v$}};
\draw (467.33,135.27) node [anchor=north west][inner sep=0.75pt]    {\scriptsize{$u^{*}$}};

\end{tikzpicture}

        \caption{The profile $\omega_1$ with its solution $u^*$ constructed in Remark \ref{rem:nofinerpart}\\}
        \label{fig:nofinerpart}
    \end{figure}
    \begin{equation}
    \label{eq:u^*-ex1}
        u^*(x,t)=
        \begin{cases}
            \bs v\ \ &\text{if} \quad \ x < \eta_-(t), \ \ t\in [0,T],
            \\
            \noalign{\smallskip}
            (f^{\prime}_l)^{-1}\Big(\dfrac{x-L_0+T \cdot f^{\prime}_l(\bs v
            )}{t}\Big)
            \ \ &\text{if}\quad
            \left\{
            \begin{aligned}
                &\eta_-(t)<x<\gamma(t), \ \ t\in [\bs \sigma, T],
            \\
            \noalign{\smallskip}
            &\eta_-(t)<x<\eta_+(t),
            \ \ t\in \,]0,\bs \sigma],
            \end{aligned}
            \right.
            \\
            \noalign{\smallskip}
            A \ \ &\text{if} \quad\  \gamma(t)<x<0, \ \ t\in [\bs \sigma, T],
            \\
            \noalign{\smallskip}
            \overline A \ \ &\text{if} \quad\  \eta_+(t)<x<0, \ \ t\in [0,\bs \sigma],
            \\
            \noalign{\smallskip}
            \overline B \ \ &\text{if} \quad\  x>0, \ \ t\in [0,T].
        \end{cases}
    \end{equation}
    Observe that every $AB$-gic for $u^*$ that reaches a point $x\in \,]L_0, 0[$ at time $T$
    has either the expression
    \begin{equation*}
        \eta_{_{\tau_1}}(t)=
            \begin{cases}
            x- (T-t)\cdot f'_l(A)
            \ \ &\text{if}\quad\ t\in [\tau_2,T],
            \\
            0 \ \ &\text{if}\quad\ t\in [\tau_1,\tau_2],
            \\
            (t-\tau_1) \cdot f'_r(\,\ol B)
            \ \ &\text{if}\quad\ t\in [0,\tau_1],
            \end{cases}
    \end{equation*}
    with $\tau_2\doteq T- x/f'_l(A)$, and 
    $\tau_1\in [0,\tau_2]$, or the expression
   \begin{equation*}
        \widetilde\eta_{_{\,\widetilde\tau_1}}(t)=
            \begin{cases}
            x- (T-t)\cdot f'_l(A)
            \ \ &\text{if}\quad\ t\in [\tau_2,T],
            \\
            0 \ \ &\text{if}\quad\ t\in [\widetilde\tau_1,\tau_2],
            \\
            (t-\widetilde \tau_1) \cdot f'_l(\,\ol A)
            \ \ &\text{if}\quad\ t\in [0,\widetilde\tau_1],
            \end{cases}
    \end{equation*}
    with $\tau_2$ as above and
    $\widetilde\tau_1\in[0,\bs \sigma]$.
    %
    By definition~\eqref{eq:ABchar-set} this means that 
    \begin{equation*}
        \mc C(u^*,x)=
        \big\{\eta_{_{\tau_1}}\;|\; 
        \tau_1\in[0,\tau_2]\big\}\,\cup\,
        \big\{\widetilde \eta_{_{\widetilde\tau_1}}\;|\; 
        \widetilde\tau_1\in[0,\,\bs \sigma]\big\}.
    \end{equation*}
    Since we have 
    \begin{equation*}
    \begin{aligned}
        \big\{\eta_{\tau_1}(0)\,|\,\tau_1\in[0,\tau_2]\big\}&=
    \big[0,\, (x/f'_l(A)-T)\cdot f'_r(\,\ol B)\big],
    \\
    \noalign{\smallskip}
   \big\{\widetilde\eta_{\widetilde\tau_1}(0)\,|\,\widetilde\tau_1\in[0,\,\bs \sigma]\big\}&=
    \big[-\bs\sigma\cdot f'_l(\,\ol A),0\big],
    \end{aligned}
    \end{equation*}
    by definition~\eqref{eq:ABchar-set0} and by virtue of~\eqref{eq:sigma-id-1}  we 
    then find that 
    \begin{equation}
    \label{eq:C0-ex1-1}
        \mc C_0(u^*,x)=\big[\,L_0 - T\cdot f'_l(\bs v),\, (x/f'_l(A)-T)\cdot f'_r(\,\ol B)\big]
        \qquad\forall~x\in\,]\,L_0, 0[\,.
    \end{equation}
    On the other hand, 
    since $\omega_2$ is constant for  $x<L_0$, there exists a unique $AB$-gic for $u^*$ that reaches a point  $x<L_0$ at time $T$,
    which is a classical genuine characteristic
    \begin{equation*}
        \vartheta_x(t)=x-(T-t)\cdot f'_l(\bs v),
        \qquad t\in [0,T],
    \end{equation*}
    because it never crosses the interface $x=0$.
    Hence, we have
    \begin{equation}
        \label{eq:C0-ex1-2}
        \mc C_0(u^*,x)=\{x-T\cdot f'_l(\bs v)\}\qquad\forall~x<L_0\,.
    \end{equation}
    Therefore, from~\eqref{eq:C0-ex1-1}, \eqref{eq:C0-ex1-2}, we deduce
    \begin{equation*}
        \mathcal{C}_0(u^*, \ol x) \cap \mathrm{cl}\left(\bigcup_{\,x< L_0} \mc C_0(u^*,x)\right)
        =\big\{\,L_0 - T\cdot f'_l(\bs v)\big\}
        \qquad\forall~\ol x\in\,]\,L_0, 0[\,,
    \end{equation*}
    which proves~\eqref{refinedconditionconvex}, and thus concludes the proof of the convexity of $\mathcal{I}_T^{[AB]}(\omega_2)$.
\end{remark}

\medskip

\section{Properties of genuine/interface characteristics
}\label{sec:charmain}

In this section we 
establish some basic properties 
enjoyed by the $AB$-genuine/interface cha\-ra\-cteristics
for an $AB$-entropy solution $u$, and by the sets $\mathcal{C}(u,x)$, $\mathcal{C}_0(u,x)$, introduced in~\S~\ref{sec:main}.



\begin{prop}
\label{lemma:Cnotempty}
Let $u$ be an $AB$-entropy solution to \eqref{conslaw}. Then the following properties hold.
\begin{enumerate}
 \item[(i)] $\mc C(u,x) \neq \emptyset$ \ 
 for all $x\in\R$;
 \item[(ii)] the  map $x \mapsto \mc C(u, x)$ has closed graph as a set-valued map from
    $\R$ into the power set of the space $\mr{Lip}([0,T]\; ; \mathbb R)$ with the topology of uniform convergence;
\item[(iii)] 
the map $x \mapsto \mc C_0(u, x)$ has closed graph as a set-valued map from
    $\R$ into the power set of
    $\R$;
\item[(iv)] the maps $x \mapsto \min\mc C_0(u, x)$,
$x \mapsto \max\mc C_0(u, x)$ are monotone nondecreasing.
\end{enumerate}
\end{prop}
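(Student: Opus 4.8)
The four assertions will be proved in the order \emph{(i)}, \emph{(ii)}, \emph{(iii)}, \emph{(iv)}, with \emph{(iii)} and \emph{(iv)} relying on the previous ones.

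\emph{(i).} Given $x\in\R$, the plan is to build an $AB$-gic through $(x,T)$ by a backward procedure. If $x\neq 0$ one starts at $(x,T)$ and follows the minimal backward characteristic of the half-plane conservation law $u_t+f_l(u)_x=0$ (if $x<0$) or $u_t+f_r(u)_x=0$ (if $x>0$); by the theory of generalized characteristics~\cite{dafermosgenchar} this is a segment along which $u$ is constant and which satisfies condition~(i) of Definition~\ref{interfacecharacteristics}. Either it reaches $t=0$ without leaving the open half-plane --- and then it is already an $AB$-gic --- or it meets $x=0$ at some time $\tau\in[0,T]$ (with $\tau=T$ if $x=0$). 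At $(0,\tau)$ one uses the trace relations~\eqref{ABtraces}: if $f_l(u_l(\tau))=f_r(u_r(\tau))=\gamma$, one prolongs the curve along the interface over the maximal interval $[\tau_1,\tau]$ on which the interface flux equals $\gamma$, so that condition~(ii) of Definition~\ref{interfacecharacteristics} holds; otherwise (or at the lower endpoint $\tau_1$, where the flux exceeds $\gamma$) one has, by~\eqref{ABtraces}, $u_l>\theta_l$ or $u_r<\theta_r$, and one re-enters the corresponding half-plane along the minimal, resp.\ maximal, backward characteristic, whose slope $f_l'(u_l)>0$, resp.\ $f_r'(u_r)<0$, forces the segment to move \emph{away} from the interface as $t$ decreases; it therefore never returns to $x=0$ and reaches $t=0$. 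This produces an $AB$-gic on $[0,T]$ with terminal value $x$, so $\mathcal C(u,x)\neq\emptyset$. The point requiring care is the interface analysis, i.e.\ that the curve cannot get ``trapped'' on $x=0$; this is handled through~\eqref{ABtraces} together with the structure of $AB$-entropy solutions established in~\cite{talamini_ancona_attset}.

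\emph{(ii).} Here I would rely on the rigid structure recalled in Remark~\ref{rem:oncharact}: any $\zeta\in\mathcal C(u,x)$ is made of at most three affine pieces --- a classical genuine characteristic of one half-plane conservation law on $[0,\tau_1]$, the interface segment on $[\tau_1,\tau_2]$, and again a half-plane genuine characteristic on $[\tau_2,T]$ --- for some $0\le\tau_1\le\tau_2\le T$. If $x_n\to x$ and $\zeta_n\in\mathcal C(u,x_n)$ with $\zeta_n\to\zeta$ uniformly (so $\zeta(T)=x$), pass to a subsequence along which $\tau_1^n\to\tau_1$, $\tau_2^n\to\tau_2$, the two slopes converge, and the side pattern (which half-plane each non-interface piece lies in) is fixed; then $\zeta$ has the corresponding three-piece form with the limiting data, and it remains to verify conditions~(i)--(ii) of Definition~\ref{interfacecharacteristics} for $\zeta$. \emph{Interface piece:} since each $\zeta_n$ satisfies condition~(ii) a.e.\ on $(\tau_1^n,\tau_2^n)$, the fixed set $G:=\{t>0:f_l(u_l(t))=f_r(u_r(t))=\gamma\}$ contains $(\tau_1^n,\tau_2^n)$ up to a null set; letting $n\to\infty$ gives that $G$ contains $(\tau_1,\tau_2)$ up to a null set, which is condition~(ii) for $\zeta$. \emph{Half-plane piece}, say $\zeta|_{[\tau_2,T]}\subset\{x>0\}$ with slope $c_n\to c$: being a genuine characteristic of $u_t+f_r(u)_x=0$, $u$ equals the constant $v_n:=(f_r')^{-1}(c_n)$ along $\zeta_n|_{[\tau_2^n,T]}$ for a.e.\ time; fixing $t_0\in(\tau_2,T)$ in the full-measure set where this holds for every $n$, one has $\zeta_n(t_0)\to\zeta(t_0)>0$ with $u(\cdot,t_0)$ continuous at $\zeta_n(t_0)$ of value $v_n\to v:=(f_r')^{-1}(c)$, so, passing to a $t_0$-dependent subsequence along which $\zeta_n(t_0)$ approaches $\zeta(t_0)$ from a fixed side, at least one of $u(\zeta(t_0)-,t_0)$, $u(\zeta(t_0)+,t_0)$ equals $v$. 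Finally, if the set of $t_0\in(\tau_2,T)$ with $u(\zeta(t_0)-,t_0)\neq u(\zeta(t_0)+,t_0)$ had positive measure, then at a.e.\ density point $\dot\zeta(t_0)=c$ would equal the Rankine--Hugoniot speed of the corresponding shock, which by the Lax entropy inequality lies strictly between $f_r'(u(\zeta(t_0)+,t_0))$ and $f_r'(u(\zeta(t_0)-,t_0))$ --- contradicting, via $c=f_r'(v)$ and ``one trace $=v$'', that this interval is open. Hence $u(\zeta(t_0)-,t_0)=u(\zeta(t_0)+,t_0)=v$ a.e., i.e.\ condition~(i) holds on this piece; the other half-plane piece is identical. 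I expect this limit-passing step --- in particular the last entropy argument, which upgrades ``generalized'' to ``genuine'' for the limit curve --- to be the main obstacle.

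\emph{(iii).} If $x_n\to x$, $y_n\in\mathcal C_0(u,x_n)$ and $y_n\to y$, choose $\zeta_n\in\mathcal C(u,x_n)$ with $\zeta_n(0)=y_n$; the $\zeta_n$ share a common Lipschitz constant (controlled by $\|u\|_{\mathbf L^\infty}$ and by $f_l,f_r$), so by the Ascoli--Arzel\`a theorem a subsequence converges uniformly to some $\zeta$, which by~\emph{(ii)} belongs to $\mathcal C(u,x)$ and satisfies $\zeta(0)=y$; thus $y\in\mathcal C_0(u,x)$, which is the assertion.

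\emph{(iv).} By~\emph{(i)} the set $\mathcal C_0(u,x)$ is nonempty, by the uniform Lipschitz bound it is bounded, and by~\emph{(iii)} it is closed (slices of a closed graph are closed); hence it is compact and $\min\mathcal C_0(u,x)$, $\max\mathcal C_0(u,x)$ are attained. Let $x_1<x_2$ and suppose $y_1:=\min\mathcal C_0(u,x_1)>y_2:=\min\mathcal C_0(u,x_2)$, with $\zeta_i\in\mathcal C(u,x_i)$, $\zeta_i(0)=y_i$. Since $\zeta_1(0)>\zeta_2(0)$ and $\zeta_1(T)=x_1<x_2=\zeta_2(T)$, the intermediate value theorem gives $t^*\in(0,T)$ with $\zeta_1(t^*)=\zeta_2(t^*)$; the Lipschitz curve equal to $\zeta_2$ on $[0,t^*]$ and to $\zeta_1$ on $[t^*,T]$ satisfies, a.e., conditions~(i)--(ii) of Definition~\ref{interfacecharacteristics} (each piece does, and $t^*$ is negligible), hence lies in $\mathcal C(u,x_1)$, yet has initial value $y_2<y_1$, contradicting the minimality of $y_1$. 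The case of $\max$ is symmetric, splicing instead $\zeta_1$ on $[0,t^*]$ with $\zeta_2$ on $[t^*,T]$. Therefore $x\mapsto\min\mathcal C_0(u,x)$ and $x\mapsto\max\mathcal C_0(u,x)$ are monotone nondecreasing.
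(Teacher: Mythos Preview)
Your overall strategy matches the paper's proof closely: parts (i), (iii) are essentially identical (backward construction using~\eqref{ABtraces}; Ascoli--Arzel\`a plus~(ii)), and your splicing argument for~(iv) is equivalent to the paper's one-line observation that the pointwise maximum of two $AB$-gics is again an $AB$-gic (your spliced curve \emph{is} $\min\{\zeta_1,\zeta_2\}$ on $[0,t^*]$).

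The one place where your argument is weaker than the paper's is~(ii), in two respects. First, you do not treat the degenerate case where a half-plane piece collapses onto the interface in the limit (e.g.\ $x_n>0$, $x_n\to 0$, slopes $c_n\to 0$); the paper handles this explicitly and shows that then $u_r(t)=\theta_r=B$ on the corresponding interval, forcing the connection to be critical and condition~(ii) of Definition~\ref{interfacecharacteristics} to hold for the limit. Second, your route to ``limit of genuine characteristics is genuine'' on the half-plane pieces --- via a density-point/Rankine--Hugoniot argument --- needs the unproved claim that $\dot\zeta(t_0)$ equals the shock speed whenever $\zeta$ crosses a discontinuity of $u$, which presupposes that $\zeta$ is a \emph{generalized} characteristic, something you have not established. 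The paper simply invokes the standard fact (from Dafermos' theory) that a uniform limit of classical genuine characteristics is itself a classical genuine characteristic; this is both shorter and avoids the circularity. Replace your entropy argument with that citation and add the degenerate-case analysis, and~(ii) will be complete.
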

\begin{proof} Throughout the proof we set $\omega^T(x)\doteq u(x,T),$ $x\in\R$,
and we let $u_l(t), u_r(t)$ denote the one-sided traces of $u(t,\cdot)$ at $x=0$.

 \textbf{1.} Proof of (i). 
 Given $x>0$, consider the minimal backward characteristic  
 for the conservation law $u_t+f_r(u)_x=0$,
 in the semiplane $\{x>0\}$,
 starting from $(x,T)$,
 defined by $\vartheta_{x,-}(t)=x-(T-t) \cdot f_r^{\prime}(\omega^T(x-))$.
 If 
$x-T\cdot f_r^{\prime}(\omega^T(x-)) \geq 0$, then 
$\vartheta_{x,-}$ is  a classical genuine characteristic for $u$ 
in the whole interval $[0,T]$, since it never 
crosses the interface $x=0$ but at most at $t=0$.
Therefore, according with Definition~\ref{interfacecharacteristics},
the map
$$\zeta(t)=x-(T-t) \cdot f_r^{\prime}(\omega^T(x-)),
\qquad t\in [0,T],$$
is an AB-genuine/interface characteristic,
and hence by~\eqref{eq:ABchar-set} it holds $\zeta \in C(u,x)$.
Otherwise, we have $x-T\cdot f_r^{\prime}(\omega^T(x-)) < 0$, and thus $\vartheta_{x,-}$
 impacts the interface at the time:
\begin{equation}
\label{eq:tau-def-1}
\tau_-(x)\doteq T-\frac{x}{f_r^{\prime}(\omega^T(x-))}>0.
\end{equation}
Then, consider the set
\begin{equation}
    \label{eq:def-E}
    E \doteq \Big\{ t \in [0, \tau_-(x)] \; \big|\; \text{either}\; u_l(t)>\theta_l\; \ \text{or}\; \ u_r(t)<\theta_r\Big\},
\end{equation}
and 
let 
\begin{equation}
\label{eq:def-supE}
    \overline \tau\doteq \sup E,
\end{equation}
where we understand that $\overline \tau=0$ when $E=\emptyset$.
Because of the  non-intersection 
property of classical genuine characteristics in the domains $\{x<0\}, \{x > 0\}$, 
and since uniform limit of classical genuine characteristics is a classical genuine characteristic as well (e.g. cfr.~\cite[proof o Lemma C.1]{talamini_ancona_attset}), 
we deduce that 
\begin{equation}
\label{eq:supE=maxE}
    \overline \tau\in E\qquad\text{if}\quad\ 
    E\neq\emptyset.
\end{equation}
Thus, when $E\neq\emptyset$, if $u_l(\overline \tau)>\theta_l$ we can consider the minimal backward characteristic  
 for the conservation law $u_t+f_l(u)_x=0$,
 in the semiplane $\{x<0\}$,
 starting from $(0,\overline\tau)$,
 defined by $\vartheta_{\overline\tau,-}(t)=(t-\overline\tau) \cdot f_l^{\prime}(u_l(\overline\tau))$.
 Otherwise, if $u_r(\overline \tau)<\theta_r$ we can consider the maximal backward characteristic
 for the conservation law $u_t+f_r(u)_x=0$,
 in the semiplane $\{x>0\}$,
 starting from $(0,\overline\tau)$,
 defined by $\vartheta_{\overline\tau,+}(t)=(t-\overline\tau) \cdot f_r^{\prime}(u_r(\overline\tau))$.
 On the other hand, by definition of $E$, and recalling the interface condition~\eqref{ABtraces}, we find that
 \begin{equation}
 \label{eq:intcond-zeta-1}
     u_l(t)= A, \quad u_r(t) =B \qquad 
\forall~t \in \,]\,\ol \tau, \tau_-(x)].
 \end{equation}
 Note in particular that
 \begin{equation}
  \label{eq:trace-ur-tau} \ol\tau<\tau_-(x)\quad\Longrightarrow\quad u_r(\tau_-(x))=\omega^T(x-)=B.
 \end{equation}
Therefore,  the piecewise affine map
\begin{equation}
    \label{eq:char-int}
    \zeta(t) = \begin{cases}
x-(T-t) \cdot f_r^{\prime}(\omega^T(x-)), & t \in [\tau_-(x), T], \\
0 , & t \in \,]\overline\tau , \tau_-(x)[, \\
(t-\overline\tau) \cdot f_l^{\prime}(u_l(\overline\tau)), & t \in [0, \overline\tau],\ \ \text{if} \ u_l(\overline \tau)>\theta_l,
\\
(t-\overline\tau) \cdot f_r^{\prime}(u_r(\overline\tau)), & t \in [0, \overline\tau],\ \ \text{if} \ u_r(\overline \tau)<\theta_r,
\end{cases}
\end{equation}
satisfy the conditions of Definition~\ref{interfacecharacteristics},
and thus it is  an $AB$-gic
belonging to the set $\mc C(u,x)$.
Note that it may well happen that $\overline\tau =\tau_-(x)$,
in which case there will be in~\eqref{eq:char-int} no nontrivial interval where the
characteristic is travelling along the interface $x=0$. Instead, in the case 
$\overline\tau =0$, the $AB$-gic
in~\eqref{eq:char-int} lies on the interface $x=0$
in the whole interval $[0,\tau_-(x)]$.

Clearly, the same analysis can be carried out to show that $\mc C(u,x) \neq \emptyset$ also for  $x<0$. It remains to consider the case
$x=0$. Notice that this case would follow from (ii) and from (i) for $x \neq 0$, however for clarity we write the construction explicitly. If we assume that 
$\omega^T(0-)>\theta_l$,
then the minimal backward characteristic for
$u_t+f_l(u)_x=0$,
 in the semiplane $\{x<0\}$,
 starting from $(0,T)$, 
is  a classical genuine characteristic for $u$ 
in the whole interval $]0,T]$, and hence it 
it is  an $AB$-gic
belonging to the set $\mc C(u,0)$.
Similarly, if $\omega^T(0+)<\theta_r$,
then the maximal backward characteristic for
$u_t+f_r(u)_x=0$,
 in the semiplane $\{x>0\}$,
 starting from $(0,T)$, 
is  a classical genuine characteristic for $u$ 
in the whole interval $[0,T]$, and hence 
it is  an $AB$-gic
belonging to the set $\mc C(u,0)$.
Finally, if $\omega^T(0-)\leq \theta_l$
and $\omega^T(0+)\geq \theta_r$, by
 the interface condition~\eqref{ABtraces}, we
 deduce that $\omega^T(0-)=A$, $\omega^T(0+)=B$.
 Then, set
 $$
 \overline\tau=\sup E,
 \qquad\quad
E \doteq \Big\{ t \in [0, T] \; \big|\; \text{either}\; u_l(t)>\theta_l\; \ \text{or}\; \ u_r(t)<\theta_r\Big\}.
$$
With the same type of analysis 
as above we find that $\overline\tau\in E$
and that the map
\begin{equation}
    \label{eq:char-int-2}
    \zeta(t) = \begin{cases}
0 , & t \in \,]\overline\tau , T], \\
(t-\overline\tau) \cdot f_l^{\prime}(u_l(\overline\tau)), & t \in [0, \overline\tau],\ \ \text{if} \ u_l(\overline \tau)>\theta_l,
\\
(t-\overline\tau) \cdot f_r^{\prime}(u_r(\overline\tau)), & t \in [0, \overline\tau],\ \ \text{if} \ u_r(\overline \tau)<\theta_r,
\end{cases}
\end{equation}
is  an $AB$-gic
belonging to the set $\mc C(u,0)$, thus completing the proof of (i).
\smallskip

\begin{figure}
\centering
\footnotesize{
\begin{tikzpicture}[scale = 0.8]
\draw[->] (-4.5,0)--(4.5,0)node[right]{$x$};
\draw[->] (0,0)--(0,4.5);
\draw[ultra thin] (-4.5,4)--(4.5,4);

\draw (-3.5,4)node[above]{$L$}--(0,1.5)--(1.5,0);
\draw (-3.5,4)--(-4,0);
\draw [dashed] (2,4)--(0,3.5);
\draw (2,4)node[above]{$R$}--(4,0);

\draw (-0.3,4)node[above]{$A$};
\draw (0.5,4)node[above]{$B$};
\draw (-0.8,4)node[above]{$\bar x$};

\draw[dashed] (-3.5,4)--(0,0.25);
\draw[dashed] (-3.5,4)--(0,0);
\draw[dashed] (-3.5,4)--(-0.2,0);
\draw[dashed] (-3.5,4)--(-0.4,0);
\draw[dashed] (-3.5,4)--(-0.6,0);
\draw[dashed] (-3.5,4)--(-0.8,0);
\draw[dashed] (-3.5,4)--(-1,0);
\draw[dashed] (-3.5,4)--(-1.2,0);
\draw[dashed] (-3.5,4)--(-1.4,0);
\draw[dashed] (-3.5,4)--(-1.6,0);
\draw[dashed] (-3.5,4)--(-1.8,0);
\draw[dashed] (-3.5,4)--(-2,0);
\draw[dashed] (-3.5,4)--(-2.2,0);
\draw[dashed] (-3.5,4)--(-2.4,0);
\draw[dashed] (-3.5,4)--(-2.6,0);
\draw[dashed] (-3.5,4)--(-2.8,0);
\draw[dashed] (-3.5,4)--(-3,0);
\draw[dashed] (-3.5,4)--(-3.2,0);
\draw[dashed] (-3.5,4)--(-3.4,0);
\draw[dashed] (-3.5,4)--(-3.6,0);
\draw[dashed] (-3.5,4)--(-3.8,0);

\draw[dashed] (-3.5,4)--(0,0.5)--(0.2,0);
\draw[dashed] (-3.5,4)--(0,0.7)--(0.5,0);
\draw[dashed] (-3.5,4)--(0,0.9)--(0.8,0);
\draw[dashed] (-3.5,4)--(0,1.1)-- (1.1,0);
\draw[dashed] (-3.5,4)--(0,1.3)-- (1.3,0);

\draw (-1.5,4)--(0,2.5)--(2,0);
\draw[dashed] (-2,4)--(0,2.3)--(1.9,0);
\draw [dashed](-2.5,4)--(0,2.1)-- (1.8,0);
\draw[dashed] (-3,4)--(0,1.9)-- (1.7,0);
\draw[dashed] (-3.5,4)--(0,1.7)-- (1.6,0);

\draw[dashed] (-1.25,4)--(0,2.75);
\draw[dashed] (-1,4)--(0,3);
\draw[very thick, blue!70!black] (-0.75,4)--(0,3.25);
\draw[very thick, blue!70!black] (0,3.25)--(0,2.5)--(2,0);
\draw[dashed] (-0.5,4)--(0,3.5);
\draw[dashed] (-0.25,4)--(0,3.75);
\draw[dashed] (1,4)--(0,3.75);

\draw[smooth, red!70!black, thick] plot coordinates{(0,2.5)(0.2,3)(1,3.6)(2,4)};

\draw (2,4)--(2,0);

\draw[dashed] (0,2.75)--(0.1,2.85);
\draw[dashed] (0,3)--(0.4,3.15);
\draw[dashed] (0,3.25)--(0.8,3.55);

\draw[dashed] (0.2,3)--(2,0);
\draw[dashed] (1.5,3.8)--(2,0);
\draw[dashed] (0.5,3.3)--(2,0);

\draw[dashed] (1,3.6)--(2,0);

\draw[dashed] (2,4)--(2.25,0);
\draw[dashed] (2,4)--(2.5,0);
\draw[dashed] (2,4)--(2.75,0);
\draw[dashed](2,4)--(3,0);
\draw[dashed](2,4)--(3.25,0);
\draw[dashed] (2,4)--(3.5,0);
\draw[dashed] (2,4)--(3.75,0);

\draw (4,4) node[above]{$\omega$};

\end{tikzpicture}
}
\caption{There are no backward generalized characteristics with time of existence $[0,T]$ from the point $\bar x$ which reach time $t = 0$ if we use the classical definition. If, instead, we consider elements in $\mathcal{C}(u, \bar x)$ (the blue line), we see that, also if at the time at which the characteristic reaches the interface it cannot be prolonged on the other side in classical sense, there is at least an element in $\mathcal{C}(u, \bar x)$ that is defined on the whole $[0,T]$.}
\end{figure}
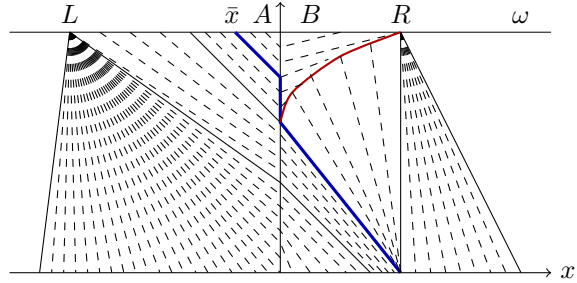

 \textbf{2.} Proof of (ii). 
 The closed graph property 
 of the map $x \mapsto \mc C(u, x)$ is equivalent to 
 \begin{equation}
 \label{eq:graphclosureC}
     \Big(x_n \to x, \quad  \zeta_n \in \mc C(u,x_n),\quad \zeta_n \to \zeta \quad \text{uniformly}\Big) \quad \Longrightarrow \quad \zeta \in \mc C(u,x).
 \end{equation}
Then, let $\{x_n\}_n$ be a sequence 
converging to $x\geq 0$, and
consider a sequence of $AB$-gic
$\zeta_n\in \mc C(u,x_n)$, that 
converge uniformly to some $\zeta\in \mr{Lip}([0,T]\; ; \mathbb R)$. By Remark~\ref{rem:oncharact}, for every~$n$ there will be
$0\leq\tau_{1,n}\leq\tau_{2,n}\leq T$, such that
\begin{equation}
\label{interf-gen-char-cond-n1}
    \zeta_n(t)=0,\qquad f_l(u_l(t))=f_r(u_r(t))=\gamma
    \qquad\ \forall~t\in [\tau_{1,n},\,\tau_{2,n}],
\end{equation}
and such that
the restriction of $\zeta_n$
     to $]0, \tau_{1,n}[$ and to $]\tau_{2,n}, T[$
     is either a classical genuine characteristic 
     for  $u_t+f_l(u)_x=0$\,
     on $\{x<0\}$, or it 
     is a classical genuine characteristic 
     for  $u_t+f_r(u)_x=0$\,
     on $\{x>0\}$.
     This, in particular, implies that
     \begin{equation}
     \label{eq:der-char-n}
         \dot\zeta_n(t)=
         \begin{cases}
             \dfrac{x_n}{T-\tau_{2,n}}
             \quad&\forall~t \in \,
              ]\tau_{2,n}, T[\,,
              \\
              \noalign{\medskip}
              -\dfrac{\zeta_n(0)}{\tau_{1,n}}
             \quad&\forall~t \in \,
              ]0,\tau_{1,n}[\,.
         \end{cases}
     \end{equation}
     Possibly considering a subsequence we can assume that $\{\tau_{i,n}\}_n$ converge to some $\tau_i\in [0,T]$, $i=1,2$, with $\tau_1\leq \tau_2$.
     Suppose that $\tau_1>0$, $\tau_2<T$. The
     cases where $\tau_1=0$, or/and $\tau_2=T$
     can be treated with entirely similar
     and simpler arguments.
     Up to extracting a further subsequence
     we may also assume that $x_n>0$
     for all $n$, and that
     \begin{equation}
     \label{interf-gen-char-cond-n2}
     \zeta_n(t)<0\qquad \forall~t\in [0, \tau_{1,n}[\,,\qquad\qquad
         \zeta_n(t)>0\qquad \forall~t\in \,]\tau_{2,n},\,T],  
         \quad\forall~n\,.
\end{equation}
Again, the cases where $\zeta_n(t)>0$ for all $t\in [0, \tau_{1,n}[\,$, or/and $x_n<0$, $\zeta_n(t)<0$ for all $t\in \,]\tau_{2,n},\,T]$, can be analyzed in an entirely similar way.
By the uniform convergence of $\zeta_n$ to $\zeta$
and since $\tau_{i,n}\to \tau_i$, $i=1,2$, it follows from~\eqref{interf-gen-char-cond-n1} that
\begin{equation}
\label{interf-gen-char-cond-1}
    \zeta(t)=0,\qquad f_l(u_l(t))=f_r(u_r(t))=\gamma
    \qquad\ \forall~t\in \,]\tau_{1},\,\tau_{2}[\,.
\end{equation}
and
\begin{equation}
\label{interf-gen-char-cond-2}
 \zeta(t)\leq 0\quad \ \forall~t\in [0, \tau_{1}],\qquad\quad
         \zeta(t)\geq 0\quad\ \forall~t\in [\tau_{2},\,T].
\end{equation}
Moreover, we have 
\begin{equation}
\label{interf-gen-char-cond-T}
    \zeta(T)=x,
\end{equation}
since $x_n\to x$ and
$x_n=\zeta_n(T)\to \zeta(T)$.
Note also that, because of~\eqref{eq:der-char-n}, there holds
\begin{equation}
     \label{eq:der-char}
         \dot\zeta(t)=\lim_n \dot\zeta_n(t)=
         \begin{cases}
             \dfrac{x}{T-\tau_{2}}
             \quad&\forall~t \in \,
              ]\tau_{2}, T[\,,
              \\
              \noalign{\medskip}
              -\dfrac{\zeta(0)}{\tau_{1}}
             \quad&\forall~t \in \,
              ]0,\tau_{1}[\,.
         \end{cases}
     \end{equation}
     
Now, if we assume that $x>0$, 
it follows from~\eqref{eq:der-char} that
that $\zeta(t)>0$ for all $t\in\,]\tau_2, T]$.
On the other hand, since 
uniform limit of classical genuine characteristics is a classical genuine characteristic as well,
we deduce that the restriction of $\zeta$ to 
$\,]\tau_2, T]$ is a classical genuine characteristic for $u_t+f_r(u)_x$. 

Next, if we assume that $x=0$, then
the uniform convergence of $\zeta_n$ to $\zeta$, together with~\eqref{interf-gen-char-cond-n2}, \eqref{eq:der-char}, imply that
\begin{equation}
\label{eq:limchar-der-2}
    \zeta(t)=\dot\zeta(t)=0\qquad \forall~t \in \,
              ]\tau_{2}, T[\,,
\end{equation}
and
\begin{equation}
\label{eq:limchar-der-3}
f'_r(u_r(t))=
\lim_n f'_r(u(\zeta_n(t),t))=\lim_n \dot\zeta_n(t)=0
    \qquad \forall~t \in \,
              ]\tau_{2}, T[\,.
\end{equation}
In turn, \eqref{eq:limchar-der-3} implies that $u_r(t)=\theta_r=B$ for all $t \in \, ]\tau_{2}, T[$, and that $(A,B)$ is a critical connection. On the other hand, because of 
the interface condition~\eqref{ABtraces}, it follows that $f_l(u_l(t))=f_r(u_r(t))=\gamma$
for all $t \in \, ]\tau_{2}, T[$,
which proves that the restriction of $\zeta$
to the interval $[\tau_2, T]$ satisfies the condition (ii) of Definition~\ref{interfacecharacteristics}.
With entirely similar arguments one can show that the restriction of $\zeta$
to the interval $[0,\tau_1]$ satisfies the condition (i) or (ii) of Definition~\ref{interfacecharacteristics},
which, together with~\eqref{interf-gen-char-cond-1}, completes the proof that $\zeta$ is
an $AB$-gic
belonging to the set $\mc C(u,x)$. This completes the proof of~\eqref{eq:graphclosureC}
whenever $\{x_n\}_n$ is a sequence 
converging to $x\geq 0$. The case where the limit point $x$ of $\{x_n\}_n$ is non positive can be treated in entirely similar way.
\smallskip


\textbf{3.} Proof of (iii). 
Let $\{x_n\}_n$ be a sequence 
converging to $x\in\R$, and let 
$\{y_n\}_n$
be a sequence of elements of $\mc C_0(u,x_n)$
converging to some point $y\in\R$.
Then, there will be a sequence of $AB$-gic 
$\zeta_n\in \mc C(u,x_n)$,
such that $y_n=\zeta_n(0)$ for all n.
Observe that by Definition~\ref{interfacecharacteristics}
 it follows that 
 \begin{equation}
     \label{eq:boundchar-derchar-n}
     |\zeta_n(t)|\leq |x_n|+LT,\qquad |\dot\zeta_n(t)|\leq
     L,\qquad\forall~t\in[0,T],\quad \forall~n\,,
 \end{equation}
 for some constant $L>0$ depending on $\|u\|_{\bf L^\infty}$. Hence, applying Ascoli-Arzel\`a Theorem, we deduce that up to a subsequence $\{\zeta_{n}\}_n$  converges uniformly to some $\zeta\in \mr{Lip}([0,T]\; ; \mathbb R)$.
 Thus, 
 in particular we have
 \begin{equation}
 \label{eq:initial-char}
     \zeta(0)=\lim_m\zeta_n(0)=\lim_n y_n=y.
 \end{equation}
Then, in view of property (ii) established 
at previous point, we find that $\zeta\in \mc C(u,x)$, and~\eqref{eq:initial-char} implies
$y\in \mc C_0(u,x)$, completing the proof of (iii). 
\smallskip

 \textbf{4.} Proof of (iv).
Given $x_1<x_2$, let $y_1=\max \mc C_0(u, x_1)$, and consider  $\zeta_1 \in \mc C(u, x_1)$ such that $\zeta_1(0) = y_1$. Choose any $\zeta_2 \in \mc C(u, x_2)$ 
and define 
$$
\zeta(t) \doteq \max \{\zeta_1(t), \, \zeta_2(t)\} \qquad t \in [0,T].
$$
Observe that, by definition the maximum of two $AB$-gic is still an $AB$-gic, and $\zeta(T) = x_2$, so that one has $\zeta \in \mc (u,x_2)$. Moreover:
$$
\max \mc C_0(u, x_1) =y_1 = \zeta_1(0) \leq \zeta(0) \leq \max \mc C_0(u, x_2).
$$
This proves (iv), and 
thus concludes the proof of the proposition.
\end{proof}

\smallskip
The next Proposition states that  
the $AB$-entropy solution $u^*$
defined in~\eqref{eq:sol-vertexinitialset} has always at least an $AB$-gic in common with every $AB$-entropy solutions $u$
satisfying $u(\cdot\,,T)=u^*(\cdot\,,T)$.
\begin{prop}\label{Clemma}
Given $\omega^T \in \mathcal{A}^{[AB]}(T)$,
let $u^*$ be the 
 $AB$-entropy solution defined by~\eqref{eq:vertexinitialset}-\eqref{eq:sol-vertexinitialset},
and let $u$ be any other 
 $AB$-entropy solution
to \eqref{conslaw} 
with initial datum $u_0\in  \mc I^{[AB]}_T(\omega^T)$.
Then, there holds 
\begin{equation}
\label{eq:Cuu^*}
\mathcal{C}(u^*, x) \cap \mathcal{C}(u, x) \neq \emptyset\qquad\forall~x\in\R\,.
\end{equation}
\end{prop}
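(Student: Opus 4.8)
The plan is to construct, for each fixed $x \in \mathbb{R}$, an $AB$-gic for $u$ that coincides at time $T$ with one of the $AB$-gics for $u^*$ passing through $(x,T)$, exploiting the key fact that $u^*$ is the image of $\omega^T$ under the backward operator and hence carries the ``minimal flux at the interface'' structure described in Remark~\ref{rem:oninterface-charact}. First I would recall that, since $u_0 \in \mathcal I^{[AB]}_T(\omega^T)$, both $u$ and $u^*$ attain the same profile $\omega^T = u(\cdot,T) = u^*(\cdot,T)$ at time $T$; in particular they share the same one-sided limits $\omega^T(x\pm)$ at every point, and the $AB$-entropy condition away from the interface is the classical Kru\v{z}kov condition with uniformly convex flux, so the backward \emph{classical} genuine characteristic emanating from $(x,T)$ with slope $f'(\omega^T(x\pm),x)$ is admissible for both solutions as long as it stays on one side of $x=0$. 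Thus, away from the interface, any classical genuine characteristic for $u^*$ through $(x,T)$ is simultaneously a classical genuine characteristic for $u$.

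The substantive case is when the minimal (or maximal) backward classical characteristic from $(x,T)$ hits the interface $x=0$ at a positive time $\tau_-(x)$ (resp. $\tau_+(x)$), as in the proof of Proposition~\ref{lemma:Cnotempty}(i). I would take the $AB$-gic $\zeta^*$ for $u^*$ constructed there: it follows the backward classical characteristic down to the interface, then travels along $\{x=0\}$ on an interval $[\overline\tau^*, \tau_-(x)]$ on which $u^*$ realizes the minimal admissible flux $\gamma$ at the interface (this is where the backward-operator structure of $u^*$ is essential — by the analysis in~\cite{talamini_ancona_attset} the solution $u^*$ dissipates no adapted entropy at $x=0$, so its interface traces satisfy $f_l(u^*_l(t)) = f_r(u^*_r(t)) = \gamma$ for $t$ below the relevant time), and then, if $\overline\tau^* > 0$, leaves the interface along a classical characteristic of $u^*$. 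The claim I must verify is that this same curve $\zeta^*$ is also an $AB$-gic for $u$. For the portions strictly off the interface this is immediate from the shared traces $\omega^T(x\pm)$ and the matching of classical characteristics noted above (one still needs that the off-interface pieces of $\zeta^*$ remain on one side — true by construction). For the interface portion I need $f_l(u_l(t)) = \gamma = f_r(u_r(t))$ for $u$ on that same time interval; this is where the interface condition~\eqref{ABtraces} for $u$, combined with the fact that $u$ and $u^*$ must both produce the \emph{same} profile $\omega^T$ near $x=0$ at time $T$, forces $u$ to also sit at minimal flux on that interval — if $u$ had flux $> \gamma$ somewhere on $[\overline\tau^*,\tau_-(x)]$, then either $u_l > \theta_l$ or $u_r < \theta_r$ there, and tracing classical characteristics forward from such a point (which, in the region adjacent to where $\zeta^*$ lands, would have to reach the segment of $\omega^T$ between the landing point and $x$) would contradict the structure of $\omega^T$ as an attainable profile, i.e. contradict $u^* = \mathcal{S}^{[AB]-}_T\omega^T$ followed forward reproducing $\omega^T$. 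I would make this rigorous by comparing the two solutions via their common backward characteristics from points of $\{(y,T) : y \text{ between the interface landing point of } \zeta^* \text{ and } x\}$.

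The main obstacle will be precisely this last step: showing that an arbitrary $AB$-entropy solution $u$ attaining $\omega^T$ must share the minimal-flux behaviour of $u^*$ along the stretch of the interface traversed by $\zeta^*$. The cleanest route is probably to argue by contradiction using the $L^1$-contraction / adapted-entropy dissipation inequality: the backward solution $u^*_0 = \mathcal{S}^{[AB]-}_T\omega^T$ is characterized (see~\cite{talamini_ancona_attset}) as the initial datum minimizing the adapted $AB$-entropy among all $u_0$ with $\mathcal{S}^{[AB]+}_T u_0 = \omega^T$, and the corresponding solution $u^*$ is an entropy-conservative trajectory at the interface; any competitor $u$ either also conserves the adapted entropy at $x=0$ on the relevant interval — in which case its traces are $(A,B)$ there and the interface piece of $\zeta^*$ is an $AB$-gic for $u$ as well — or it dissipates strictly, but then a genuine-characteristic tracing argument on the side where $\zeta^*$ leaves the interface shows the emitted waves cannot consistently reconstruct the common profile $\omega^T$ on $]\zeta^*(\tau_-(x)\text{-landing}), x[$, a contradiction. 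Finally I would separately dispatch the boundary case $x=0$ (using the construction~\eqref{eq:char-int-2} and the same comparison) and the symmetric case where the relevant backward characteristic comes from the right side, concluding that $\zeta^* \in \mathcal{C}(u^*,x) \cap \mathcal{C}(u,x)$ for every $x$, which is~\eqref{eq:Cuu^*}.
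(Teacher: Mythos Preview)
Your proposal reverses the direction of the argument relative to the paper, and that reversal is fatal. You take $\zeta^*\in\mathcal C(u^*,x)$ and attempt to show $\zeta^*\in\mathcal C(u,x)$, which requires that $u$ have minimal interface flux $\gamma$ on the whole interval $[\overline\tau^*,\tau_-(x)]$ where $u^*$ does. But this is false in general: look at the solution $u_2$ in \S\ref{subsec:examples} (Figure~\ref{exampleu2}), where rarefactions from both sides impinge on the interface in $[0,\bs\sigma[$, giving $u_{2,l}(t)>\overline A$, $u_{2,r}(t)<\overline B$ and hence $f_l(u_{2,l}(t))=f_r(u_{2,r}(t))>\gamma$ there, while $u^*$ has flux exactly $\gamma$ on that same interval. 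So the $AB$-gic for $u^*$ that travels along the interface down to a time in $[0,\bs\sigma[$ is \emph{not} an $AB$-gic for $u_2$. Your entropy-dissipation/contradiction argument cannot rule this out, because $u_2$ is a perfectly legitimate element of $\mathcal I_T^{[AB]}(\omega^T)$.

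The paper instead constructs $\zeta\in\mathcal C(u,x)$ (using the set $E$ and the time $\overline\tau$ computed from the traces of $u$, not of $u^*$) and then verifies $\zeta\in\mathcal C(u^*,x)$. This works because $u^*$, as the backward--forward solution, has an explicit near-interface structure (computed in step~{\bf 2} of the paper's proof, see~\eqref{eq:ul*cond1}) which guarantees that $u^*$ has minimal flux on the interface portion of $\zeta$; roughly, $u^*$ has flux $\gamma$ on a \emph{maximal} time interval, so any interface sojourn admissible for $u$ is automatically admissible for $u^*$. There is a second gap in your proposal: for the lower off-interface segment of $\zeta^*$ you claim the classical characteristic for $u^*$ is ``immediately'' one for $u$ from the shared traces $\omega^T(x\pm)$, but that segment starts at $(0,\overline\tau^*)$, not at time $T$, and the traces $u_l(\overline\tau^*)$, $u^*_l(\overline\tau^*)$ need not coincide a priori. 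The paper handles the analogous matching (for its $\overline\tau$) via a nontrivial blow-up argument~\eqref{eq:bup-def}--\eqref{eq:tracesit} together with a comparison with the shock curve $\gamma$ of $u^*$ to establish~\eqref{eq:tau-bar-tau-ineq}. Your sketch does not contain any mechanism to produce either ingredient.
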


\begin{proof}
To fix the ideas, we will assume that,
letting $\ms L, \ms R$ be the quantities
defined in~\eqref{eq:LR-def}, there holds
$\ms L=0$, $\ms R\in \,]0, T\cdot f'_r(B)[$\,, and that
$\omega^T$
fulfills the conditions (i)'-(ii)' of~\cite[Theorem 4.7]{talamini_ancona_attset}
for a non critical connection
$(A,B)$, which in particular require
    \begin{align}
    \label{eq:omegaT-cond-0}
         \omega^T(x-)&\geq \omega^T(x+)
         \qquad\forall~x\neq 0,
         \\
         \label{eq:omegaT-cond-1}
        \omega^T(x)&\geq B\qquad\qquad \forall~x\in\,]0,\ms R[\,.
    \end{align}
The cases where $\omega^T$ belongs to other classes of reachable profiles described in~\cite[Theorems 4.2, 4.7, 4.9]{talamini_ancona_attset} can be analyzed with entirely similar arguments.
Throughout the proof we  let $u_l(t), u_r(t)$, and $u^*_l(t), u^*_r(t)$, denote the one-sided traces of $u(t,\cdot)$
and $u^*(t,\cdot)$, respectively, at $x=0$.
\smallskip

\textbf{1.} 
Relying on the fact that any sequence
$\{\zeta_n\}_n$ of $AB$-gic 
(for $u^*$ and $u$) 
admits a subsequence uniformly convergent to some 
$\zeta\in \mr{Lip}([0,T]\; ; \mathbb R)$
(see point {\bf 3.} of the proof of Proposition~\ref{lemma:Cnotempty}),
and since the map
$x\to \mathcal{C}(u^*, x) \cap \mathcal{C}(u, x)$ has closed graph by
Proposition~\ref{lemma:Cnotempty}-(ii),
it will be sufficient to show that
$\mathcal{C}(u^*, x) \cap \mathcal{C}(u, x) \neq \emptyset$ holds for all point $x$ of continuity for~$\omega^T$.
Moreover, for every point $x\in\,]\!-\infty, 0[\ \cup\ ]\ms R, +\infty[$\, of continuity for $\omega^T$, 
there exists a unique $AB$-gic for $u^*$ and $u$ that reaches the point $x$ at time $T$,
    which is a classical genuine characteristic $\vartheta_x$
    for $u^*$ and $u$ (since it never crosses the interface $x=0$,
    but at most at $t=0$,
    by definition~\eqref{eq:LR-def}).
    Thus we have $\mathcal{C}(u^*, x) \cap \mathcal{C}(u, x)=\{\vartheta_x\}$ for all point 
    $x\in\,]\!-\infty, 0[\ \cup\ ]\ms R, +\infty[$\, of continuity for $\omega^T$. As a consequence, in order to establish~\eqref{eq:Cuu^*} it will
    be sufficient to show
    \begin{equation}
\label{eq:Cuu^*-2}
\mathcal{C}(u^*, x) \cap \mathcal{C}(u, x) \neq \emptyset\qquad\quad\text{for all $x\in\,]0, \ms R[\,$ of continuity for $\omega^T$}.
\end{equation}
To this end, given any $x\in\,]0, \ms R[\,$ of continuity for $\omega^T$, we consider the $AB$-gic
$\zeta\in \mathcal{C}(u, x)$
 defined in~\eqref{eq:char-int},
 with $\ol\tau$ as in~\eqref{eq:def-supE}
 and 
 \begin{equation}
\label{eq:tau-def-2}
    \tau(x) \doteq T-\frac{x}{f_r^{\prime}(\omega^T(x))},
\end{equation}
 in place of $\tau_-(x)$. We will show that $\zeta$ also belongs to $\mathcal{C}(u^*, x)$.
 Note that by definition of $\ms R$ at~\eqref{eq:LR-def} we have $\tau(x) >0$. 
  \smallskip
 
 \textbf{2.} 
We determine here  explicitly
the 
 $AB$-entropy solution $u^*$ defined by~\eqref{eq:vertexinitialset}-\eqref{eq:sol-vertexinitialset}
 when $\omega^T$
satisfies the conditions (i)'-(ii)' of~\cite[Theorem 4.7]{talamini_ancona_attset}
for a non critical connection,
with
$\ms L=0$, $\ms R\in \,]0, T\cdot f'_r(B)[$\,. These conditions require in particular that
\begin{align}
\label{eq:omegaT-cond0}
\omega^T(0-)&\geq\pi(\omega^T(0+)),
\\
\label{eq:omegaT-cond1}
    \omega^T(x)&\geq B,\qquad 
    \forall~x\in\,]0, \ms R[\,,
    \\
    \label{eq:omegaT-cond2}
    \omega^T(\ms R+)&\leq \bs u\,,
\end{align}
where 
\begin{equation}
    \pi(u)\doteq  ({f_l}_{\mid [\theta_l,+\infty)})^{-1} \circ f_r(u),\qquad u\in\R,
\end{equation}
and $\bs u\doteq \bs u[\ms R, B, f_r]$
is the quantity defined in~\cite[\S~3.1]{talamini_ancona_attset}
   that satisfies
   \begin{equation}
   \label{eq:prop-constraint-1}
       B>\bs u
       >\ol B,
       \qquad\quad
       f'_r(\bs u)<\ms R/T,
   \end{equation}
   (with $\ol B$  defined as in~\eqref{eq:bar-AB-def}).
Because of condition~\eqref{eq:omegaT-cond1}, according with the analysis in~\cite[\S~5.4]{talamini_ancona_attset} the solution $u^*$ contains a shock curve
starting at the interface $x=0$
and reaching the point $\ms R$ at time $T$,
which is parametrized by a map
    $\gamma : [\bs\tau, T]\to [0,\infty[\,$,  
    with the properties that $\gamma(\bs\tau)=0$,
    $\gamma(T)=\ms R$, 
    where $\bs\tau\doteq \bs \tau[\ms R, B, f_r]$
    is a quantity defined as
    in~\cite[\S~3.4]{talamini_ancona_attset}.
    The curve
     $t\to (\gamma(t), t)$ is the location of a shock  for the conservation law
    $u_t+f_r(u)_x=0$, which connects 
    the left state~$B$
    with the
    right states 
    $(f'_r)^{-1}\big(\big({\gamma(t)-\ms R+T \cdot f'_r(\bs u
    )}\big)/{t}\big)$, $t\in [\bs\tau, T]$.
    On the right of $\gamma(t)$ there is a rarefaction wave, connecting the left state~$\ol B$ 
    with the right state~$\bs u$,
    and centered at the point $(\ms R-T \cdot f'_r(\bs u), 0)$. Moreover, there holds
    \begin{equation}
    \label{eq:tau-id-1}
        \bs \tau=\dfrac{T\cdot f'_r(\bs u)-\ms R}{f'_r(\,\ol B)}.
    \end{equation}
    Following the procedure described in~\cite[\S~5.4]{talamini_ancona_attset}, in order to define $u^*$ we introduce some notations 
    for the polygonal lines along which 
    $u^*$ takes constant values
    in each region $\{x<0\}$, 
    $\{x>0\}$
    (that correspond to $AB$-gic for $u^*$).
    We define
\begin{equation}
\begin{aligned}
 \vartheta_{\rm 0, -}(t) &\doteq
 (t-T)\cdot f'_l(\omega^T(0-)),
    \\
    \vartheta_{\rm 0, +}(t) &\doteq
    (t-T)\cdot f'_l\big(\pi(\omega^T(0+))\big),
    \\
    \vartheta_{\rm R, -}(t) &\doteq
    \begin{cases}
        \ms R-(T-t) \cdot f'_r\big(\omega^T(\ms R-),
        \ \ &\text{if}\quad \tau_-(\ms R)\leq t\leq T,
        \\
        \noalign{\smallskip}
        \big(t-\tau_-(\ms R)\big)
 \cdot f'_l\circ\pi(\omega^T(\ms R-)),
 \ \  &\text{if}\quad 0\leq t\leq \tau_-(\ms R),
    \end{cases}
    \\
    \vartheta_{\rm R, +}(t) &\doteq
    \ms R-(T-t) \cdot f'_r\big(\omega^T(\ms R+)\big),
\end{aligned}
\end{equation}
and, for every $y \in \,]-\infty, 0\,[\,\cup \,]\ms R, +\infty[$\,, we define
    \begin{equation}
\label{eq:pollines}
     \vartheta_{y, \pm}(t) \doteq \!\begin{cases}
y-(T-t) \cdot f_l^{\prime}\big(\omega^T(y\pm)\big), & \text{ if \ $y < 0, \quad 0\leq t \leq T$},\\
\noalign{\smallskip}
  y-(T-t) \cdot f'_r\big(\omega^T(y\pm)\big), & \text{ if \ $0< y < {\ms R}, \quad \tau_{\pm}(y) 
  \leq t \leq  T$},\\
  \noalign{\smallskip}
 \big(t-\tau_{\pm}(y)\big)
 \cdot f'_l\circ\pi(\omega^T(y\pm)), & \text{ if \ $0< y < {\ms R}, \quad 0 \leq t < \tau_{\pm}(y),$}\\
 \noalign{\smallskip}
    y-(T-t) \cdot f'_r\big(\omega^T(y\pm)\big), & \text{ if \ $y>  \ms R, \quad 0\leq t \leq T$},
        \end{cases}
\end{equation}
    where
    \begin{equation}
    \label{eq:tau-def-pm}
    \begin{aligned}
    \tau_{\pm}(y)&\doteq
    T-\frac{y}{f'_r(\omega^T(y\pm))},\qquad y>0.
    \end{aligned}
    \end{equation}
    Moreover, letting $\{y_n\}_n$ denote the (at most) countably many discontinuity points of $\omega^T$
    in the intervals $]-\infty, 0]$,
    $]\ms R, +\infty[$, we set 
    \begin{equation*}
    \begin{aligned}
        \mc I^n_{0}
        &=\,]x_n^-,x_n^+[\,, \quad x_n^\pm=\vartheta_{y_n,\pm}(0),
\quad y_n\in \,]-\infty, 0]\,,
\\
\mathcal{I}^n_{\ms R}&=\,]x_n^-,x_n^+[\,, \quad x_n^\pm=\vartheta_{y_n,\pm}(0),
\quad y_n\in  \,]\ms R,+\infty[\,,
        \end{aligned}
    \end{equation*}
    (here we consider the possibility of a jump 
    of $\omega^T$ in $x=0$ when $\omega^T(0-)>\pi(\omega^T(0+))$).
    The intervals $\mc I^n_{0}$, $\mathcal{I}^n_{\ms R}$, consist of the starting points of compression waves in $u^*$ that generate a shock at $(y_n, T)$.
    
    Next, we introduce the polygonal lines
    connecting two points $(z,0)$, $(y,T)$
    (that correspond to compression fronts
    for $u^*$ generating a shock at the point 
    $(y,T)$) defined by
    \begin{equation}
        \eta_{y,z}\doteq
        \begin{cases}
            y -(T-t)\cdot \frac{(y-z)}{T},\quad &\text{if}\quad y\in\,]\!-\infty, 0]\,\cup\,]\mr R, +\infty[\,,\quad 0\leq t \leq T,
            \\
            \noalign{\smallskip}
             y -(T-t)\cdot f'_r(u_{y,z})
             \quad &\text{if}\quad 0<y<\ms R,
             \quad T-{y}/{f'_r(u_{yz})}\leq t\leq T,
             \\
            \noalign{\smallskip}
            \big(t-T+{y}/{f'_r(u_{yz})}\big)\cdot f'_l\circ \pi(u_{y,z})
            \quad &\text{if}\quad 0<y<\ms R,
             \quad 0\leq t<T-{y}/{f'_r(u_{yz})},
        \end{cases}
    \end{equation}
    where $u_{y,z}$ is the unique constant $u\geq (f'_r)^{-1}(y/T)$ satisfying 
    \begin{equation*}
        \Big(\frac{y}{f'_r(u)}-T\Big)\cdot f'_l\circ \pi(u)=z
    \end{equation*}
    (see~\cite[\S~5.4.1]{talamini_ancona_attset}).
    Finally, we set
    \begin{equation*}
    \begin{aligned}
        r_-(t)&\doteq 
                \ms R-T \cdot f'_r(\bs u)+ t \cdot f'_r(\,\ol B),
        \quad\ t\in [0,\bs \tau],
        \\
        \noalign{\smallskip}
        r_+(t)&\doteq   \ms R-(T-t) \cdot f'_r(\bs u),\quad\ t\in [0,T].
    \end{aligned}
    \end{equation*}
    %
    Then, the function $u^*$ defined by~\eqref{eq:vertexinitialset}-\eqref{eq:sol-vertexinitialset} is given by
\begin{equation}
    \label{eq:u^*-ex3}
        u^*(x,t)=
        \begin{cases}
\omega^T(y\pm), & \text{if \ $x=\vartheta_{y,\pm}(t)$ \ for some \ $y \in \,]-\infty, 0[\, \cup \,]\ms R, +\infty[\,$},\\
\noalign{\smallskip}
\omega^T(y\pm), & \text{if \ 
$x=\vartheta_{y,\pm}(t)>0$ \ for some \ $y \in \,]0, \ms R[ \,$},\\
\noalign{\smallskip}
\pi
(\omega^T(y\pm)), & \text{if \ 
$x=\vartheta_{y,\pm}(t)<0$ \ for some \ $y \in \,]0,\ms R[ \,$},\\
\noalign{\smallskip}
(f_r^{\prime})^{-1}\big(\frac{y_n-z}{T}\big), & \text{if \ $ x = \eta_{y_n, z}(t)$ \ for some \ $z \in \mc I^n_{\ms R}$},\\
\noalign{\smallskip}
(f_l^{\prime})^{-1}\big(\frac{y_n-z}{T}\big), & \text{if \ $ x = \eta_{y_n, z}(t)$ \ for some \ $z \in \mc I^n_{0}$},\\
\noalign{\smallskip}
\ B & \text{if}\ 
           \left\{
            \begin{aligned}
                &\vartheta_{\rm R, -}(t)\leq x<\gamma(t), \ \ t\in [\tau_-(\ms R), T],
            \\
            \noalign{\smallskip}
            &0<x<\gamma(t),
            \ \ t\in [\bs\tau, \tau_-(\ms R)],
            \end{aligned}
            \right.
            \\
\noalign{\smallskip}
\ \ol A  & \text{if}\ \ \vartheta_{\rm R, -}(t)\leq x<0, \ \ t\in [0,\tau_-(\ms R)],
\\
\noalign{\smallskip}
\ \ol B  & \text{if}\ \ 0<x\leq 
r_-(t), \ \ t\in [0, \bs \tau],
\\
\noalign{\smallskip}
(f'_r)^{-1}\Big(\dfrac{x-\ms R+T \cdot f'_r(\bs u
            )}{t}\Big)
            \ \ &\text{if}\ 
            \left\{
            \begin{aligned}
                &\gamma(t)<x<r_+(t), \ \ t\in [\bs \tau, T],
            \\
            \noalign{\smallskip}
            &r_-(t)<x<r_+(t),
            \ \ t\in [0,\bs \tau],
            \end{aligned}
            \right.
\\
\noalign{\smallskip}
(f'_r)^{-1}\big(\frac{\ms R-x}{T-t}\big) & \text{if}\ \ 
r_+(t)\leq x\leq \vartheta_{\rm R, +}(t), \ \ t\in [0, T[\,.
\end{cases}
\end{equation} 
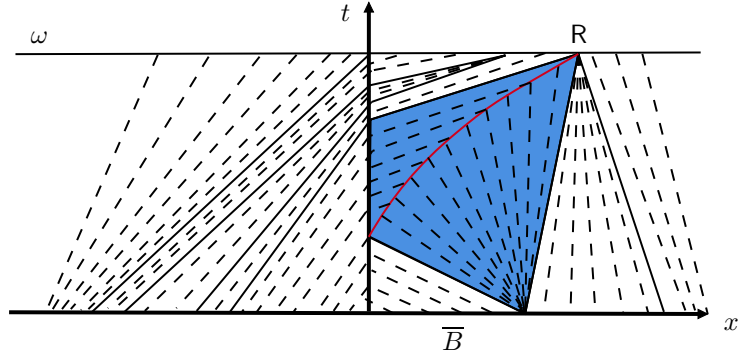
\begin{figure}
    \centering
    
\tikzset{every picture/.style={line width=0.75pt}} 

\begin{tikzpicture}[x=0.75pt,y=0.75pt,yscale=-0.7,xscale=0.7]

\draw  [fill={rgb, 255:red, 74; green, 144; blue, 226 }  ,fill opacity=0.64 ] (526.47,84.33) -- (488.13,271.67) -- (375.8,216) -- (376.8,131.67) -- cycle ;
\draw [line width=1.5]    (375.54,269.96) -- (375.54,51.2) ;
\draw [shift={(375.54,47.2)}, rotate = 90] [fill={rgb, 255:red, 0; green, 0; blue, 0 }  ][line width=0.08]  [draw opacity=0] (6.97,-3.35) -- (0,0) -- (6.97,3.35) -- cycle    ;
\draw [line width=1.5]    (615.67,271.86) -- (116.33,270.62) ;
\draw [shift={(619.67,271.87)}, rotate = 180.14] [fill={rgb, 255:red, 0; green, 0; blue, 0 }  ][line width=0.08]  [draw opacity=0] (6.97,-3.35) -- (0,0) -- (6.97,3.35) -- cycle    ;
\draw    (614.6,83.3) -- (120.93,84.5) ;
\draw  [dash pattern={on 4.5pt off 4.5pt}]  (492.13,105) -- (488.13,271.67) ;
\draw  [dash pattern={on 4.5pt off 4.5pt}]  (512.13,93) -- (488.13,271.67) ;
\draw  [dash pattern={on 4.5pt off 4.5pt}]  (431.47,149.67) -- (488.13,271.67) ;
\draw  [dash pattern={on 4.5pt off 4.5pt}]  (526.47,84.33) -- (520.8,271) ;
\draw  [dash pattern={on 4.5pt off 4.5pt}]  (446.13,135.67) -- (488.13,271.67) ;
\draw  [dash pattern={on 4.5pt off 4.5pt}]  (526.47,84.33) -- (501.47,271) ;
\draw  [dash pattern={on 4.5pt off 4.5pt}]  (526.47,84.33) -- (538.8,271) ;
\draw  [dash pattern={on 4.5pt off 4.5pt}]  (526.47,84.33) -- (558.8,271) ;
\draw  [dash pattern={on 4.5pt off 4.5pt}]  (460.13,124.33) -- (488.13,271.67) ;
\draw  [dash pattern={on 4.5pt off 4.5pt}]  (526.47,84.33) -- (576.8,271) ;
\draw  [dash pattern={on 4.5pt off 4.5pt}]  (475.47,113.67) -- (488.13,271.67) ;
\draw    (526.47,84.33) -- (488.13,271.67) ;
\draw    (526.47,84.33) -- (588.13,270.33) ;
\draw    (526.47,84.33) -- (376.8,131.67) ;
\draw    (375.47,216.33) -- (488.13,271.67) ;
\draw [color={rgb, 255:red, 208; green, 2; blue, 27 }  ,draw opacity=1 ]   (526.47,84.33) .. controls (470.13,117) and (466.8,117.67) .. (438.8,141) .. controls (410.8,164.33) and (392.13,187) .. (375.47,216.33) ;
\draw  [dash pattern={on 4.5pt off 4.5pt}]  (433.47,145) -- (375.8,166.67) ;
\draw  [dash pattern={on 4.5pt off 4.5pt}]  (460.13,124.33) -- (378.13,154.33) ;
\draw  [dash pattern={on 4.5pt off 4.5pt}]  (390.8,192.33) -- (373.47,198.33) ;
\draw  [dash pattern={on 4.5pt off 4.5pt}]  (418.13,159) -- (488.13,271.67) ;
\draw  [dash pattern={on 4.5pt off 4.5pt}]  (492.13,105) -- (375.8,143.67) ;
\draw  [dash pattern={on 4.5pt off 4.5pt}]  (400.8,177.67) -- (374.13,187.67) ;
\draw  [dash pattern={on 4.5pt off 4.5pt}]  (414.13,162.33) -- (376.8,176.33) ;
\draw  [dash pattern={on 4.5pt off 4.5pt}]  (406.13,173.67) -- (488.13,271.67) ;
\draw  [dash pattern={on 4.5pt off 4.5pt}]  (394.8,188.33) -- (488.13,271.67) ;
\draw  [dash pattern={on 4.5pt off 4.5pt}]  (384.8,201) -- (488.13,271.67) ;
\draw  [dash pattern={on 4.5pt off 4.5pt}]  (393.6,271.55) -- (372.8,261) ;
\draw  [dash pattern={on 4.5pt off 4.5pt}]  (421.13,271.98) -- (375.47,251.67) ;
\draw  [dash pattern={on 4.5pt off 4.5pt}]  (464.18,270.95) -- (376.13,228.33) ;
\draw  [dash pattern={on 4.5pt off 4.5pt}]  (442.73,271.24) -- (377.47,240.33) ;
\draw  [dash pattern={on 4.5pt off 4.5pt}]  (473.95,85.39) -- (375.47,115) ;
\draw  [dash pattern={on 4.5pt off 4.5pt}]  (395.64,85.62) -- (375.47,91.67) ;
\draw  [dash pattern={on 4.5pt off 4.5pt}]  (378.13,154.33) -- (294.8,270.33) ;
\draw  [dash pattern={on 4.5pt off 4.5pt}]  (375.8,143.67) -- (284.13,270.33) ;
\draw  [dash pattern={on 4.5pt off 4.5pt}]  (375.47,115) -- (236.8,267.67) ;
\draw  [dash pattern={on 4.5pt off 4.5pt}]  (376.13,111) -- (214.13,269.67) ;
\draw  [dash pattern={on 4.5pt off 4.5pt}]  (377.47,96.33) -- (192.13,268.33) ;
\draw  [dash pattern={on 4.5pt off 4.5pt}]  (473.95,85.39) -- (376.13,111) ;
\draw  [dash pattern={on 4.5pt off 4.5pt}]  (376.13,127) -- (264.8,269) ;
\draw  [dash pattern={on 4.5pt off 4.5pt}]  (438.13,85) -- (377.47,96.33) ;
\draw    (376.13,107) -- (199.47,270.33) ;
\draw  [dash pattern={on 4.5pt off 4.5pt}]  (502.8,83.67) -- (376.13,127) ;
\draw  [dash pattern={on 4.5pt off 4.5pt}]  (375.47,91.67) -- (187.47,267.67) ;
\draw    (473.95,85.39) -- (377.47,119) ;
\draw    (377.47,119) -- (251.47,269.67) ;
\draw    (473.95,85.39) -- (376.13,107) ;
\draw    (375.96,84.65) -- (176.13,270.33) ;
\draw    (376.8,131.67) -- (275.47,270.33) ;
\draw  [dash pattern={on 4.5pt off 4.5pt}]  (375.47,251.67) -- (361.8,269.33) ;
\draw  [dash pattern={on 4.5pt off 4.5pt}]  (373.47,198.33) -- (320.8,271) ;
\draw  [dash pattern={on 4.5pt off 4.5pt}]  (376.8,176.33) -- (308.13,269.67) ;
\draw  [dash pattern={on 4.5pt off 4.5pt}]  (373.47,235.67) -- (350.13,269.67) ;
\draw  [dash pattern={on 4.5pt off 4.5pt}]  (375.47,216.33) -- (337.8,270.67) ;
\draw  [dash pattern={on 4.5pt off 4.5pt}]  (260.13,86.33) -- (147.8,268.33) ;
\draw  [dash pattern={on 4.5pt off 4.5pt}]  (296.8,86.33) -- (154.47,268.33) ;
\draw  [dash pattern={on 4.5pt off 4.5pt}]  (323.47,85) -- (160.8,268.33) ;
\draw  [dash pattern={on 4.5pt off 4.5pt}]  (347.8,84.33) -- (168.13,269) ;
\draw  [dash pattern={on 4.5pt off 4.5pt}]  (362.47,84.67) -- (173.47,268.33) ;
\draw  [dash pattern={on 4.5pt off 4.5pt}]  (223.47,85) -- (143.13,269.67) ;
\draw  [dash pattern={on 4.5pt off 4.5pt}]  (553.6,84.3) -- (610.6,269.3) ;
\draw  [dash pattern={on 4.5pt off 4.5pt}]  (572.6,84.3) -- (619.67,271.87) ;
\draw  [dash pattern={on 4.5pt off 4.5pt}]  (536.63,86.11) -- (603.6,269.3) ;

\draw (518.47,61.4) node [anchor=north west][inner sep=0.75pt]  [font=\footnotesize]  {$\mathsf{R}$};
\draw (129.8,66.4) node [anchor=north west][inner sep=0.75pt]  [font=\footnotesize]  {$\omega $};
\draw (629.13,272.73) node [anchor=north west][inner sep=0.75pt]  [font=\footnotesize]  {$x$};
\draw (353.13,46.73) node [anchor=north west][inner sep=0.75pt]  [font=\footnotesize]  {$t$};
\draw (426.13,278.71) node [anchor=north west][inner sep=0.75pt]  [font=\footnotesize]  {$\overline{B}$};

\end{tikzpicture}

    \caption{The solution $u^*$}
    \label{fig:u*lemma}
\end{figure}
Observe that the left and right traces of $u^*$ satisfy
\begin{equation}
\label{eq:ul*cond1}
\begin{aligned}
        u^*_l(t)&\geq \ol A,\qquad u^*_r(t)\geq B,
    \qquad \forall~t\in\,]\tau_-(\ms R),T],
    \\
    u^*_l(t)&= \ol A,\qquad u^*_r(t)= B,
    \qquad \forall~t\in\,]\bs\tau,\tau_-(\ms R)],
    \\
    u^*_l(t)&= \ol A,\qquad u^*_r(t)= \ol B,  \qquad \forall~t\in\,]0,\bs \tau].
\end{aligned}
\end{equation}
Moreover, since $x$ is a point of continuity for $\omega^T$, 
it follows that 
the restriction of $\zeta$ 
to $\,]\tau(x), T]$
is a (classical) genuine characteristic both for $u$
and $u^*$ 
with slope $f'_r(\omega^T(x))>0$
so that, 
recalling~\eqref{ABtraces}, there holds
\begin{equation}
\label{eq:ur*cond1}
    u_r(\tau(x))=\omega^T(x)=u^*_r(\tau(x))
    >\theta_r,
    \qquad\quad
    f_l(u_l(\tau(x)))=f_r(u^*_r(\tau(x))). 
\end{equation}
Now, we will distinguish two cases
according with the position of $\ol\tau$ with respect to the time 
$\tau_-(\ms R)$ defined as in~\eqref{eq:tau-def-pm}.
Note that by definition of $\ms R$ at~\eqref{eq:LR-def} we have 
$\tau_-(\ms R) \geq 0$.
 \smallskip
 
 \textbf{3.} 
 Assume that $\overline\tau \geq \tau_-(\ms R)$, and suppose first that $\tau(x)=\ol \tau> \tau_-(\ms R)$. Note that, since $\ol\tau>0$
 is an element of the set $E$
 in~\eqref{eq:def-E}, and
 because of
 ~\eqref{eq:ul*cond1}, 
 \eqref{eq:ur*cond1},
 we have $u_l(\tau(x))=u^*_l(\tau(x))>\theta_l$. Therefore, also
 the restriction of $\zeta$ 
to $[0,\tau(x)[\,$
is a (classical) genuine characteristic both for $u$
and $u^*$. Hence, when 
$\tau(x)=\ol \tau>\tau_-(\ms R)$,
the map $\zeta$ in~\eqref{eq:char-int}
 is an $AB$-gic also for~$u^*$,
 proving that $\zeta\in \mathcal{C}(u^*, x) \cap \mathcal{C}(u, x)$.
 
 Next, consider the subcase 
 $\tau(x)>\ol\tau\geq \tau_-(\ms R)$, and 
observe that by~\eqref{eq:intcond-zeta-1}, \eqref{eq:trace-ur-tau}, 
\eqref{eq:ur*cond1},
we have
\begin{equation}
\label{eq:urtrace}
u_l(t)=A,\quad
u_r(t)=B\qquad\forall~t\in\,]\,\ol\tau, \tau(x)],
\qquad\quad u^*_r(\tau(x))=B\,.
\end{equation}
Moreover, we claim that
\begin{equation}
\label{eq:clu*trace}
\begin{aligned}
     \omega^T(z)&=B,\qquad\forall~z\in
    [x,\,\ol x[\,,
    \quad \ol x \doteq 
    (T-\ol\tau)\cdot f'_r(B),
    \\
    \noalign{\smallskip}
    u_r^*(t)&=B,
    \qquad\forall~t\in \,]\,\ol\tau,\tau(x)]\,.
\end{aligned}
\end{equation}
Note that the first equality in~\eqref{eq:clu*trace}  implies the second one by tracing the backward (genuine) characteristics for $u^*$ 
at time $T$, from points $z\in [x,\,\ol x[\,$.
In order to prove the first equality in~\eqref{eq:clu*trace}, we trace 
the minimal backward characteristic 
$\vartheta_{z,-}$ for the solution~$u$, at time~$T$, from points $z \in [x, \ol x]$.
Observe that 
$\vartheta_{z,-}$
impacts the interface $x=0$ at time 
$\tau_-(z)\doteq T-{z}/{f'_r(\omega^T(z-))}$.
Moreover, since
$\omega^T(\ms R-)\geq B$ because of~\eqref{eq:omegaT-cond-1},
we deduce from $\ol \tau\geq \tau_-(\ms R)$ that
\begin{equation}
    \label{eq:barx}
    \ol x \leq  \ms R.
\end{equation}
Furthermore, 
we  have
\begin{equation}
    \label{eq:omega-B-1}
    \tau_-(z)\geq 
T-\frac{z}{f'_r(B)}\geq 
\ol\tau \qquad\ \  \forall~z \in [x,\,
\ol x],
\end{equation}
since
$\omega^T(z-) \geq B$
by virtue of~\eqref{eq:omegaT-cond-1}. 
On the other hand, by~\eqref{eq:tau-def-2}
we know that
the (genuine) characteristic $\vartheta_x$ for $u$, starting at time $T$ from the point $x$,
reaches the interface $x=0$ at time $\tau(x)$.
Since $\vartheta_x$, $\vartheta_{z,-}$
are (classical) genuine characteristics 
that cannot cross in the
domain $\{x>0\}$, it follows that
\begin{equation}
\label{eq:omega-B-2}
    \tau(x)\geq \tau_-(z)
    \qquad\ \  \forall~z \in [x,\,
    \ol x]\,.
\end{equation}
Combining together~\eqref{eq:omega-B-1}, \eqref{eq:omega-B-2},
we deduce that, 
for every $z\in [x,\,\ol x]$,
the minimal backward characteristics $\vartheta_{z,-}$
reaches the interface $x=0$ 
at time $\tau_-(z)\in [\,\ol\tau,\tau(x)]$.
Hence, because of~\eqref{eq:urtrace},
we find that for all $z\in [x,\,\ol x]$
there holds $\omega^T(z-) = u_r(\tau_-(z))=B$, and this 
yields the first equality in~\eqref{eq:clu*trace}, concluding the proof of claim \eqref{eq:clu*trace}.

 Relying on~\eqref{eq:clu*trace}
we will show now that 
$\zeta$ 
satisfies the condition of an $AB$-gic
also for $u^*$
on the interval $[0, \tau(x)]$.
To this end, observe that~\eqref{eq:ul*cond1} \eqref{eq:clu*trace} together imply
\begin{equation}
    \label{eq:cru*trace-2}
    u_l^*(t)=\ol A,
    \qquad\forall~t\in 
    \,]\,\ol\tau,\tau(x)]\,.
\end{equation} 
Hence, because of~\eqref{eq:clu*trace},
\eqref{eq:cru*trace-2}, $\zeta$ satisfies the condition (ii) of Definition~\ref{interfacecharacteristics}
of an $AB$-gic for $u^*$ on the interval
$\,]\ol\tau, \tau(x)]$.
Moreover, let $\vartheta_{t_n}^*$
denote the (classical genuine) backward characteristic for $u^*$, on the
region $\{x<0\}$, starting at time $t_n\in \,]\,\ol\tau,\tau(x)]\,$
from $x=0$,
for a sequence $t_n\downarrow \ol\tau$.
Note that, because of~\eqref{eq:cru*trace-2}, all 
$\vartheta_{t_n}^*$ have slope $f_l'(\,\ol A)$. Thus $\{\vartheta_{t_n}^*\}_n$
converges uniformly to a function $\vartheta^*: [0,\ol\tau]\to\R$
that is as well a (classical) genuine characteristic 
for~$u^*$ with slope $f_l'(\,\ol A)$ and such that $\vartheta^*(\ol\tau)=0$.
This in turn implies that
\begin{equation}
    \label{eq:cluu*trace-1}
u^*_l(\ol\tau) = 
\ol A\,.
\end{equation}

Next, we will prove that 
\begin{equation}
\label{eq:cluu*trace-2}
    u_l(\ol\tau) 
    = \ol A\,.
\end{equation}
%
 To this end
 observe that~\eqref{eq:omegaT-cond-0}, 
 \eqref{eq:omegaT-cond-1}, 
 \eqref{eq:clu*trace}, \eqref{eq:barx}
 together imply
$\omega^T(\ol x-)= B=\omega^T(\ol x+)$.
This means that the 
   characteristic
 $\vartheta_{\ol x}$ starting
 at time~$T$ from~$\ol x$,
 and reaching $x=0$ at time~$\ol \tau$,
is a (classical) genuine characteristic for $u$
(on the semiplane $\{x>0\}$),
and hence we deduce that 
\begin{equation}
    \label{eq:crutrace-2}
    u_r(\ol \tau)=\omega^T(\ol x)=B.
\end{equation}
Recalling~\eqref{eq:supE=maxE}
and the definition~\eqref{eq:def-E}
of the set $E$, we derive from~\eqref{eq:crutrace-2}
and from 
condition~(2) of Definition~\ref{ABsol}
that 
\begin{equation}
    \label{eq:ltracebartau}
    u_l(\ol\tau)>\theta_l.
\end{equation}
In turn, condition~\eqref{eq:ltracebartau}, together with~\eqref{eq:urtrace},
implies~\eqref{eq:cluu*trace-2} by a blow-up argument as in~\cite[\S~5.2.6]{talamini_ancona_attset}. 
Namely, we can consider 
the blow ups 
of $u$ at the point $(0, \ol\tau\,)$:
\begin{equation}
\label{eq:bup-def}
    u_n(x,t) \doteq u\big(x/n,\, \ol\tau +{(t-\ol\tau)}/{n}\big)
    \qquad x\in\R, \ t\geq 0\,,\ \ n\in\mathbb{N},
\end{equation} 
and observe that,
because of~\eqref{eq:urtrace},
the left and right traces of $u_n(\cdot, t)$ at $x=0$
satisfy
\begin{equation}
\label{eq:tracesit-3}
    (u_{n,l}(t), u_{n,r}(t))=(A,B)\quad\qquad\forall~t\in\big]\ol\tau, \, \ol\tau +n\big(\tau(x)-\ol\tau\big)\big[\,.
\end{equation}
When $n\to\infty$,
up to a subsequence, 
the blow ups $u_n(\cdot, t)$
converge in ${\bf L^1_{loc}}$  to a limiting $AB$ entropy solution $v(\cdot, t)$,
for all $t>0$,
and there holds 
\begin{equation}
\label{eq:indata-blup}
    v(x, \ol\tau\,)= 
\begin{cases}
    u_l(\ol\tau), & \text{if $x < 0$},\\
    u_r(\ol\tau), & \text{if $x > 0$},
\end{cases} 
\end{equation}
\begin{equation}
\label{eq:tracesit}
    v(0-,t)\in\{A,\ol A\,\},\qquad\qquad v(0+,t)\in\{B,\ol B\,\},\qquad\forall~t>\ol\tau\,.
\end{equation}
Then, by a direct inspection
we find that, if 
an $AB$ entropy solution
of a Riemann problem for~\eqref{conslaw}, with initial datum~\eqref{eq:indata-blup}
at time $\ol\tau$, enjoys the properties
~\eqref{eq:ltracebartau}, \eqref{eq:tracesit}, 
it follows that the left initial datum 
at time $\ol\tau$ 
 must be 
\begin{equation*}
    v(x,\ol\tau)=u_l(\ol\tau) 
 = \ol A,\qquad \forall~x<0,
\end{equation*} 
thus proving~\eqref{eq:cluu*trace-2}.

The two conditions~
equalities~\eqref{eq:cluu*trace-1}, 
\eqref{eq:cluu*trace-2} 
and the definition~\eqref{eq:char-int}
imply that 
the restriction of~$\zeta$ 
to~$[0, \ol\tau[\,$
is a (classical) genuine characteristic both for $u$
and $u^*$ 
with slope $f'_l(\,\ol A)>0$.
Therefore we can conclude that
$\zeta$ 
satisfies the condition of an $AB$-gic
also for $u^*$
on the interval $[0, \tau(x)]$,
and hence on the whole interval $[0,T]$ by the analysis in the point {\bf 2}.
This completes the proof that $\zeta\in \mathcal{C}(u^*, x) \cap \mathcal{C}(u, x)$ when $\ol\tau\geq \tau_-(\ms R)$.
 
\smallskip

 \textbf{4.} 
 Assume that $\overline\tau <\tau_-(\ms R)$,
 with $\tau_-(\ms R)$ as in~\eqref{eq:tau-def-pm}.
If we suppose that~\eqref{eq:ltracebartau} 
holds, since~\eqref{eq:urtrace} is still verified we can deduce as above 
that \eqref{eq:cluu*trace-2} holds as well,
and then we conclude that $\zeta\in \mathcal{C}(u^*, x) \cap \mathcal{C}(u, x)$
with the same arguments of point  {\bf 3}.

Therefore, let us assume that $u_l(\ol \tau) \leq \theta_l$ and that $\zeta(t) > 0$
for all $t\in [0, \ol\tau[$\,. 
Observe that because of~\eqref{eq:supE=maxE},
and by definition~\eqref{eq:def-E}
of the set $E$, we have 
\begin{equation}
    \label{eq:rtracebartau}
    u_r(\ol\tau)<\theta_r.
\end{equation}
Then, relying on~\eqref{eq:urtrace},
we deduce with the same blow up argument of above that 
\begin{equation}
\label{eq:cruu*trace-2}
    u_r(\ol\tau) 
    = \ol B\,.
\end{equation}
On the other hand, if we show that
\begin{equation}
\label{eq:tau-bar-tau-ineq}
    \ol\tau \leq \bs \tau,
\end{equation}
it would follow from~\eqref{eq:ul*cond1} that
\begin{equation}
\label{eq:cruu*trace-3}
    u^*_r(\ol\tau)= \ol B\,.
\end{equation}
The two conditions~\eqref{eq:cruu*trace-2},
\eqref{eq:cruu*trace-3} 
and the definition~\eqref{eq:char-int}
imply that 
the restriction of~$\zeta$ 
to~$[0, \ol\tau[\,$
is a (classical) genuine characteristic both for $u$
and $u^*$ 
with slope $f'_r(\,\ol B)<0$.
Therefore, if~\eqref{eq:tau-bar-tau-ineq}
holds, we can conclude that
$\zeta$ 
satisfies the condition of an $AB$-gic
also for $u^*$
on the interval $[0, \tau(x)]$,
and hence on the whole interval $[0,T]$ by the analysis in the point {\bf 2}.
Hence, in order to completes the proof that $\zeta\in \mathcal{C}(u^*, x) \cap \mathcal{C}(u, x)$ when $\ol\tau<\tau_-(\ms R)$,
it remains to establish~\eqref{eq:tau-bar-tau-ineq}.

By contradiction, assume that $\ol\tau > \bs \tau$. Define the curve

$$
\xi(t) \, \doteq \, \inf \Big\{ R >0 \; \big |\; x-t \, f_r^{\prime}(u(x,t)) \geq 0 \ \ \ \forall~R>0\Big\} \qquad t \in [\ol\tau, T]. 
$$
Notice that, because of~\eqref{eq:cruu*trace-2}, we have
$\xi(\ol\tau) =0$,
while the definition~\eqref{eq:LR-def} yields $\xi(T) = \ms R$. But now using a comparison argument between $\xi(t)$ and the map $\gamma(t)$ 
defining the shock curve of $u^*$ at point {\bf 2},
we obtain as in~\cite[\S 5.2.3]{talamini_ancona_attset}
that $\xi(t)<\gamma(t)$
for all $t\in [\ol\tau, T]$.
Thus, we find in particular
that
$\xi(T)  <\gamma(T)=\ms R$, 
which gives a contradiction. This concludes the proof of the proposition.

\end{proof}

\smallskip
The next Lemma shows that the initial positions
of the  $AB$-gics of 
the $AB$-entropy solution~$u^*$ defined in~\eqref{eq:sol-vertexinitialset} provide a partition of $\R$.

\begin{lemma}\label{lemma:Csurjective}
Given $\omega^T \in \mathcal{A}^{[AB]}(T)$,
let $u^*$ be the 
 $AB$-entropy solution defined by~\eqref{eq:vertexinitialset}-\eqref{eq:sol-vertexinitialset}.
Then, there holds 
\begin{equation}
\label{eq:partition-R}
    \mathbb R = \bigcup_{x \in \mathbb R} \mc C_0(u^*, x).
\end{equation}
\end{lemma}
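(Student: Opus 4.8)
The plan is to prove the nontrivial inclusion $\R\subseteq\bigcup_{x\in\R}\mc C_0(u^*,x)$ by a \emph{forward} characteristic construction, dual to the backward one carried out in the proof of Proposition~\ref{lemma:Cnotempty}-(i): for a fixed $y\in\R$ I would build an $AB$-gic $\zeta$ for $u^*$ with $\zeta(0)=y$, and then, setting $x\doteq\zeta(T)$, conclude $y\in\mc C_0(u^*,x)$. As in the proof of Proposition~\ref{Clemma}, I would fix the class of $\omega^T$ among those of~\cite[Theorems~4.2,~4.7,~4.9]{talamini_ancona_attset} (say the one with $\ms L=0$, $\ms R\in\,]0,T\cdot f'_r(B)[\,$, non critical connection), the remaining cases being entirely analogous. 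A preliminary remark I would record is that $u_0^*=\mc S^{[AB]-}_T\omega^T$, being a backward-attainable profile, satisfies a one-sided Lipschitz bound \emph{from below} (the backward analogue of the Ole\u\i nik inequality~\eqref{eq:Oleinik}, cf.\ the Ole\u\i nik-type estimates characterizing the range of $\mc S^{[AB]-}_T$ in~\cite{talamini_ancona_attset}); hence $u_0^*$ has no downward jumps on $]-\infty,0[$ nor on $]0,+\infty[$, so that no shock of $u^*$ issues from the interior of $\R\times\{0\}$, and a forward genuine characteristic of $u^*$ is well defined starting from every point $(y,0)$ (at a jump point of $u_0^*$ one takes the appropriate extremal one).

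Next I would run the following trichotomy on the maximal forward generalized characteristic $\xi$ of, say, $u_t+f_l(u)_x=0$ issuing from $(y,0)$ with $y<0$ (the cases $y>0$ and $y=0$ being symmetric, using the reflection of Definition~\ref{def:backop}): by the classical theory of generalized characteristics~\cite{dafermosgenchar}, $\xi$ is Lipschitz, piecewise affine and genuine up to a maximal time, at which either (a) $\xi$ is still contained in $\{x<0\}$ and is genuine at time $T$ — then $\xi$ itself is the desired $AB$-gic; or (b) $\xi$ reaches the interface $\{x=0\}$ at a first time $\sigma\le T$; or (c) $\xi$ is absorbed into a shock of $u^*$ lying in $\{x<0\}$ at some $\sigma<T$. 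In case (b), using the interface relations~\eqref{ABtraces} for $u^*$ and the explicit structure of $u^*$ recalled in Step~{\bf 2} of the proof of Proposition~\ref{Clemma}, I would distinguish whether the interface flux at time $\sigma$ is minimal — in which case $\xi$ is prolonged to an $AB$-gic on $[0,T]$ by letting it travel along $\{x=0\}$ from $\sigma$ to $T$ and possibly detaching with a genuine characteristic on one side, exactly as in the curves~\eqref{eq:char-int}--\eqref{eq:char-int-2} — or not minimal, i.e.\ $u^*_l(\sigma)>\theta_l$ (or $u^*_r(\sigma)<\theta_r$), in which case one checks that $u^*_r(\sigma)>\theta_r$ as well, so that $\xi$ crosses the interface transversally at the single instant $\sigma$ (where condition (ii) of Definition~\ref{interfacecharacteristics} is vacuous) and continues as the maximal forward genuine characteristic of $u_t+f_r(u)_x=0$ on $\{x>0\}$; re-running the trichotomy on the new piece, and noting that each piece meets $\{x=0\}$ at most once and that $u^*$ has a bounded, finite-speed wave structure, the process terminates after finitely many steps and produces an $AB$-gic for $u^*$ defined on all of $[0,T]$.

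The hard part will be excluding case (c), i.e.\ genuine absorption into a proper shock before time $T$. Here I would invoke that $u^*=\mc S^{[AB]+}_{(\cdot)}\circ\mc S^{[AB]-}_T\omega^T$ and use the structure of such solutions from~\cite{talamini_ancona_attset}: in the open region $\{x<0\}\times\,]0,T[\,$ the only shocks of $u^*$ are the compression-type shocks, which collapse exactly at time $T$ and are therefore reached by characteristics only at $t=T$ (so they do not absorb $\xi$ at any $\sigma<T$), together with the single shock curve $\gamma$ emanating from the interface (as in Step~{\bf 2} of the proof of Proposition~\ref{Clemma}). For the latter I would observe that the genuine characteristics impinging on $\gamma$ from the side facing $\{t=0\}$ all emanate from the vertex of a centered rarefaction wave of $u^*$, whose initial position is a single point $z_0\in\R$; hence either $y\ne z_0$, so $\xi$ is not absorbed before $T$ and cases (a)--(b) apply, or $y=z_0$, in which case one takes $\zeta$ to be the extremal characteristic of that rarefaction fan, which is a genuine characteristic reaching $\ms L$ at time $T$ and hence an $AB$-gic with $\zeta(0)=z_0$. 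Together with the symmetric discussion of $y>0$ and $y=0$, this shows that every $y\in\R$ lies on an $AB$-gic for $u^*$, which is exactly~\eqref{eq:partition-R}.
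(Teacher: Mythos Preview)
Your forward-construction strategy is sound in spirit and is genuinely different from the paper's route. The paper does not trace characteristics forward; it simply invokes the explicit structural decomposition of $u^*$ carried out in~\cite[\S~5.4.3]{talamini_ancona_attset}, under which every $y\in\R$ is already identified as the foot of (i)~a compression front that meets its companions in a shock exactly at $t=T$, (ii)~a one- or two-segment genuine characteristic (possibly refracted at $x=0$) reaching a continuity point of $\omega^T$, or (iii)~a three-segment polygonal line whose middle piece lies on $\{x=0\}$ over an interval where the interface flux equals~$\gamma$. Each of these is an $AB$-gic by inspection, and~\eqref{eq:partition-R} follows in one line. Your approach, by contrast, would give a more self-contained argument dual to Proposition~\ref{lemma:Cnotempty}-(i).

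That said, the execution has gaps. First, the interface dichotomy is mislabeled: when $\xi$ arrives at $x=0$ from $\{x<0\}$ the value it carries is \emph{always} $>\theta_l$ (otherwise $\xi$ would not be moving rightward), so ``$u^*_l(\sigma)>\theta_l$'' cannot characterize the non-minimal-flux branch; the correct split is $u^*_l(\sigma)=\ol A$ (flux $=\gamma$) versus $u^*_l(\sigma)>\ol A$. Second, in the class you fix ($\ms L=0$, $\ms R>0$) the shock curve $\gamma$ from Step~{\bf 2} of the proof of Proposition~\ref{Clemma} lies in $\{x\geq 0\}$, not in $\{x<0\}$, so your case-(c) discussion for $y<0$ is vacuous there and belongs instead to $y>0$. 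Third, and most importantly, the assertion ``one checks that $u^*_r(\sigma)>\theta_r$ as well'' is \emph{not} a general property of $AB$-entropy solutions: one can have $u_l(\sigma)>\ol A$ together with $u_r(\sigma)<\ol B$ (characteristics impinging on $x=0$ from both sides with flux $>\gamma$), and in that configuration your forward continuation stalls completely. That this does not occur for $u^*$ is precisely a consequence of the trace structure~\eqref{eq:ul*cond1}, which itself comes from the explicit construction in~\cite{talamini_ancona_attset}. Once those structural facts are granted your forward bookkeeping goes through, but at that point you are re-deriving the paper's classification with an extra layer on top rather than replacing it.
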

\begin{proof}
By the analysis in \cite[\S 5.4.3]{talamini_ancona_attset} we deduce that
$\R$ can be partitioned as the union of sets 
containing points of  three types: 
\begin{itemize}
[leftmargin=25pt]
    \item[-]
    starting points of compression fronts (possibly refracted by the interface $x=0$) which meet together generating a shock at time $T$; 
    \item[-]
    starting points of classical genuine characteristics or of polygonal lines made of two segments consisting of classical genuine characteristics in each semiplane $\{x<0\}$,
    $\{x>0\}$, which reach at time $T$ a point of continuity of $\omega^T$.
    \item[-] starting points $y$ of polygonal lines $\xi:[0,T] \to \mathbb R$ with $\xi(0) = y$, composed of three segments of the form 
    $$
    \xi(t) = \begin{cases}
        y+t \, f^\prime(u(t,\xi(t)), \xi(t)), & \text{if $0 \leq t \leq t_1$},\\
        0 , & \text{if $t_1 \leq t \leq t_2$},\\
        (t-t_2) f^\prime(u(t,\xi(t)), \xi(t)) & \text{if $t_2 \leq t \leq T$} 
    \end{cases}
    $$
    where 
    $$
    f(u(t, 0\pm)) = \gamma \qquad \forall t \in (t_1, t_2).
    $$
    Notice that these polygonal lines may belong to the near-interface regions $\Delta_{\ms L}, \Gamma_{\ms R}$ defined in (\cite{talamini_ancona_attset}, \S 5.4.4).
\end{itemize}
In all cases they are starting points of segments
or of polygonal lines which are $AB$-gics for $u^*$,
and the result follows.
\end{proof}

\medskip


We introduce now a functional that, 
for any given function $v(x,t)$, 
measures the total amount of flux of the vector field $\big(f(x,v(x,t)),\, v(x,t)\big)$ passing through a curve
$t\mapsto (\alpha(t),t)$, from each side
of the curve. 
\begin{defi}
Given a function $v \in \mathbf L^{\infty}(\mathbb R \times [0,T]\, ; \, \mathbb R)$   that admits one-sided limits $v(x\pm,t)$ at every point $(t,x) \in \,]0, T] \times \mathbb{R}$, and  $\alpha\in  \mr{Lip}([0,T]\, ; \, \mathbb R)$,
we define
\begin{equation}
\label{eq:def-F}
\mathcal{F}_t(\alpha\pm, v) \doteq \int_t^T \Big\{f\big(\alpha(t)\pm, v(\alpha(t) \pm,t)\big) - \dot{\alpha}(t)\,v(\alpha(t) \pm,t)\Big\} \dif t,\qquad t\in [0,T],
\end{equation}
where $f(x,u)$ is the flux \eqref{discflux}.
We also set 
\begin{equation}
\mathcal F(\alpha \pm, v) \doteq\mathcal F_0(\alpha \pm, v).
\end{equation}
\end{defi}

\begin{remark}
\label{rem:def-F-on-gic}
Notice that if $u$ is an $AB$-entropy  solution of \eqref{conslaw}, 
since $u$ is in particular a distributional solution of~\eqref{conslaw} on $\R\times \,]0,+\infty[$,
it follows that 
for any curve $\alpha\in  \mr{Lip}
([0,T]\, ; \, \mathbb R)$, 
the Rankine-Hugoniot conditions yield,
for a.e. $t\in [0,T]$, the equality
\begin{equation}
\label{eq:RH}
    f\big(\alpha(t)-, u(\alpha(t)-,t)\big)-\dot{\alpha}(t)\,u(\alpha(t)-,t) = f\big(\alpha(t)+, u(\alpha(t)+,t)\big)-\dot{\alpha}(t)\,u(\alpha(t)+,t).
\end{equation}
Therefore, in this case we have
$\mc F(\alpha+, u)= \mc F(\alpha-, u)$. Hence, since there is no ambiguity, 
 whenever $u$ is an $AB$-entropy  solution of~\eqref{conslaw},
 we will simply write
$$\mc F_t(\alpha, u) \doteq \mc F_t(\alpha+, u) \equiv \mc F_t(\alpha-, u) \ \ \ \forall~t, \qquad 
\mc F(\alpha, u) \doteq \mc F(\alpha+, u) \equiv \mc F(\alpha-, u).
$$
\end{remark}

\begin{lemma}\label{Flemma}
Let $u,u^* \in \mathbf L^{\infty}(\mathbb R \times [0,T]\,;\,\R)$ be $AB$-entropy solutions to  \eqref{conslaw}, and let $\zeta \in \mathcal{C}(u^*,x)$,
$x \in \mathbb R$. Then, there holds
\begin{equation}
\label{eq:Fest1}
\mathcal{F}_t(\zeta, u) \geq \mathcal{F}_t(\zeta, u^*), \quad \forall \; t \in [0,T].
\end{equation}
Moreover, one has 
\begin{equation}
\label{eq:Fest2}
    \mathcal{F}(\zeta, u) = \mathcal{F}(\zeta, u^*)
    \quad \Longleftrightarrow\quad 
    \zeta \in \mc C (u^*,x) \cap \mc C(u,x).
\end{equation}
\end{lemma}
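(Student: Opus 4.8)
The plan is to compare the two vector fields $(f(x,u),u)$ and $(f(x,u^*),u^*)$ along the curve $\zeta$ by exploiting that $\zeta$ is an $AB$-gic for $u^*$, which (by conditions (i)–(ii) of Definition~\ref{interfacecharacteristics}) makes $u^*$ satisfy a pointwise \emph{extremality} relation along $\zeta$, whereas $u$ only satisfies the Kružkov / interface entropy inequalities. First I would rewrite the integrand of $\mc F_t$ using Remark~\ref{rem:def-F-on-gic}: since both $u$ and $u^*$ are $AB$-entropy solutions, the Rankine–Hugoniot identity \eqref{eq:RH} holds along $\zeta$, so $\mc F_t(\zeta,u)$ and $\mc F_t(\zeta,u^*)$ are unambiguously defined from either trace. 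Then, for a.e.\ $t$, I would split according to whether $\zeta(t)\neq 0$ or $\zeta(t)=0$. On the set $\{\zeta(t)\neq 0\}$, condition (i) gives $\dot\zeta(t)=f'(u^*(\zeta(t)\pm,t),\zeta(t))$, so by the standard convexity inequality $f(v)-f(w)-f'(w)(v-w)\ge 0$ (applied with the one-sided flux $f_l$ or $f_r$ according to the sign of $\zeta(t)$) one gets, at a point where $u^*$ is continuous across $\zeta$,
\begin{equation*}
\big[f(\zeta(t),u(\zeta(t),t))-\dot\zeta(t)\,u(\zeta(t),t)\big]-\big[f(\zeta(t),u^*(\zeta(t),t))-\dot\zeta(t)\,u^*(\zeta(t),t)\big]\ \ge\ 0,
\end{equation*}
with equality iff $u(\zeta(t)\pm,t)=u^*(\zeta(t)\pm,t)$ (strict convexity \eqref{eq:flux-assumption-1}). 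On the set $\{\zeta(t)=0\}$, condition (ii) gives $\dot\zeta(t)=0$ for a.e.\ such $t$ and $f_l(u^*_l(t))=f_r(u^*_r(t))=\gamma$; since $\gamma=f_l(A)=f_r(B)$ is the minimum admissible interface flux, the interface condition \eqref{ABtraces} for $u$ forces $f_l(u_l(t))=f_r(u_r(t))\ge\gamma$, hence the integrand difference reduces to $f_l(u_l(t))-\gamma\ge 0$ (using the RH identity to evaluate $\mc F$ from the left trace), again $\ge 0$, with equality iff $f_l(u_l(t))=\gamma$. Integrating over $[t,T]$ yields \eqref{eq:Fest1}.

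For the equivalence \eqref{eq:Fest2}: the implication $(\Leftarrow)$ is immediate, since if $\zeta\in\mc C(u,x)$ as well, the same computation run with the roles of $u,u^*$ swapped gives $\mc F(\zeta,u)\le\mc F(\zeta,u^*)$, so combined with \eqref{eq:Fest1} (for $t=0$) one gets equality. For $(\Rightarrow)$, suppose $\mc F(\zeta,u)=\mc F(\zeta,u^*)$; since the integrand difference is a.e.\ nonnegative, it must vanish a.e.\ on $[0,T]$. By the equality cases isolated above, this forces, for a.e.\ $t$: if $\zeta(t)\neq 0$ then $u(\zeta(t)\pm,t)=u^*(\zeta(t)\pm,t)$, hence $\dot\zeta(t)=f'(u^*(\zeta(t)\pm,t),\zeta(t))=f'(u(\zeta(t)\pm,t),\zeta(t))$, i.e.\ condition (i) of Definition~\ref{interfacecharacteristics} holds for $u$ along $\zeta$; and if $\zeta(t)=0$ then $f_l(u_l(t))=\gamma=f_r(u_r(t))$ and $\dot\zeta(t)=0$, i.e.\ condition (ii) holds for $u$. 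Since $\zeta$ is Lipschitz with $\zeta(T)=x$ by hypothesis, this shows $\zeta$ is an $AB$-gic for $u$, so $\zeta\in\mc C(u,x)\cap\mc C(u^*,x)$.

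The main obstacle I anticipate is the careful handling of the points where $u^*$ (or $u$) is discontinuous across $\zeta$ — one must check that the RH identity \eqref{eq:RH} legitimately lets us evaluate $\mc F_t$ from whichever one-sided trace is convenient, and that on a jump of $u^*$ across $\zeta$ (where $\zeta$ is a shock for $u^*$) the genuine-characteristic condition still yields the sign, using that a shock speed lies between the two characteristic speeds and a convexity/secant argument; one also has to confirm measurability and that the set $\{\zeta(t)=0\}$ of positive measure contributes exactly the term $\int (f_l(u_l(t))-\gamma)\,dt$ with the correct sign. A secondary technical point is justifying $\dot\zeta(t)=0$ a.e.\ on $\{\zeta(t)=0\}$, which follows from Remark~\ref{rem:oncharact} (the $AB$-gic is piecewise affine and flat on the interface interval $[\tau_1,\tau_2]$).
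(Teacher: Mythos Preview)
Your proposal is correct and follows essentially the same approach as the paper's proof: split into the off-interface and on-interface regimes, use the convexity inequality $f(v)-f'(w)v\ge f(w)-f'(w)w$ together with $\dot\zeta=f'(u^*)$ on the former, and the interface lower bound $f_l(u_l)\ge\gamma$ on the latter, then read off the equality case from strict convexity. Your anticipated obstacle about jumps of $u^*$ across $\zeta$ is in fact a non-issue: condition~(i) of Definition~\ref{interfacecharacteristics} forces $f'(u^*(\zeta(t)-,t))=f'(u^*(\zeta(t)+,t))$, and strict convexity then gives $u^*(\zeta(t)-,t)=u^*(\zeta(t)+,t)$, so $u^*$ is automatically continuous across $\zeta$ where $\zeta(t)\neq 0$ (the paper uses this implicitly by writing $u^*(\zeta(t),t)$ without a $\pm$).
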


\begin{proof}
Let $0\leq\tau_1\leq\tau_2\leq T$,
be the partition of $[0,T]$ 
for $\zeta\in \mathcal{C}(u^*,x)$ given
by Remark~\ref{rem:oncharact},
so that there holds
\begin{equation*}
\label{interf-gen-char-cond-5}
    \zeta(t)=0,\qquad f_l(u^*_l(t))=f_r(u^*_r(t))=\gamma
    \qquad\ \forall~t\in [\tau_1,\,\tau_2].
\end{equation*}
To fix the ideas we assume 
that 
\begin{equation}
     \label{interf-gen-char-cond-n4}
     \zeta(t)<0\qquad \forall~t\in [0, \tau_1[\,,\qquad\quad
         \zeta(t)>0\qquad \forall~t\in \,]\tau_2,\,T].
\end{equation}
The cases where  $\zeta(t)>0$ for all $t\in [0, \tau_1[\,,$ or $\zeta(t)<0$
for all $t\in \,]\tau_2,\,T]$ are entirely similar.
Then, setting
\begin{equation}
\label{eq:ggstardef}
\begin{aligned}
    g(t)&\doteq
    \begin{cases}
        f_r(u(\zeta(t)+,t))-\dot{\zeta}(t)\, u(\zeta(t)+,t)
        \quad &\text{if}\quad t\in \,]\tau_2,\,T],
        \\
        \noalign{\smallskip}
        f_r(u_r(t))
        \quad &\text{if}\quad t\in [\tau_1, \tau_2],
        \\
        \noalign{\smallskip}
        f_l(u(\zeta(t)+,t))-\dot{\zeta}(t)\, u(\zeta(t)+,t)
        \quad &\text{if}\quad t\in [0,\tau_1[\,,
    \end{cases}
    \\
    \noalign{\medskip}
    g^*(t)&\doteq
    \begin{cases}
        f_r(u^*(\zeta(t),t))-\dot{\zeta}(t)\, u^*(\zeta(t),t)
        \quad &\text{if}\quad t\in \,]\tau_2,\,T],
        \\
        \noalign{\smallskip}
        \ \gamma
        \quad &\text{if}\quad t\in [\tau_1, \tau_2],
        \\
        \noalign{\smallskip}
        f_l(u^*(\zeta(t),t))-\dot{\zeta}(t)\, u^*(\zeta(t),t)
        \quad &\text{if}\quad t\in [0,\tau_1[\,,
        \end{cases}
\end{aligned}
\end{equation}
we write 
\begin{equation}
\label{eq:Fg1}
    \mathcal{F}_t(\zeta, u)=\int_t^T g(t)\dif t,
    \qquad\quad
    \mathcal{F}_t(\zeta, u^*)=\int_t^T g^*(t)\dif t,\qquad\forall~t\in [0,T].
\end{equation}
Note that, because of~\eqref{eq:RH}, in the definition of $g$ we may equivalently 
take $u(\zeta(t)-,t)$ instead of $u(\zeta(t)+,t)$, while in the definition of $g^*$ we take $u^*$ continuous at $(\zeta(t),t)$
since $\zeta$ is a classical genuine characteristic for $u^*$
when $t\in \,]0,\tau_1[\, \cup\,]\tau_2, T[$\,.
Observe that, since $u$ is an $AB$-entropy solutions to~\eqref{conslaw},
by the interface condition~\eqref{ABtraces} we have
\begin{equation}
\label{eq:gest1}
    f_r(u_r(t))\geq \gamma \qquad \forall~t\in [\tau_1, \tau_2].
\end{equation}
On the other hand, because of the convexity of $f_r$ there holds
\begin{equation}\label{eq:convexest}
f_r(v) - f_r^{\prime}(w)\,v \geq f_r(w) - f_r^{\prime}(w)\,w, \quad \forall \; v,w \in \mathbb R.
\end{equation}
Moreover, since $\zeta$ is an $AB$-gic for $u^*$, and because of~\eqref{interf-gen-char-cond-n4}, 
note that the restriction of $\zeta$
to $[0,\tau_1[\,$ is a classical genuine characteristic for $u^*$ as solution of $u_t+f_l(u)_x=0$, 
and the restriction of $\zeta$
to $\,]\tau_2, T]$ is a classical genuine characteristic for $u^*$ as solution of $u_t+f_r(u)_x=0$.
Hence, it follows that
\begin{equation}
    \label{eq:charspeed-1}
    \dot\zeta(t)=
    \begin{cases}
 f'_l(u^*(\zeta(t),t))\quad &\text{if}\ \quad t \in \,]0,\tau_1[\,,
 \\
 \noalign{\smallskip}
 \ 0 &\text{if}\ \quad t \in \,]\tau_1, \tau_2[\,,
 \\
 \noalign{\smallskip}
 f'_r(u^*(\zeta(t),t))\quad &\text{if}\ \quad t \in \,]\tau_2,T[\,.
    \end{cases}
\end{equation}
Thus, \eqref{eq:convexest}-\eqref{eq:charspeed-1} together imply
\begin{equation}
\label{eq:gest2}
\begin{aligned}
    f_l(u(\zeta(t)+,t))-\dot\zeta(t)\, u(\zeta(t)+,t)&\geq f_l(u^*(\zeta(t),t))-
    \dot\zeta(t)\,
    u^*(\zeta(t),t)
    \qquad\forall~t\in 
    \,]0, \tau_1[\,,
    \\
    \noalign{\smallskip}
    f_r(u(\zeta(t)+,t))-\dot\zeta(t)\, u(\zeta(t)+,t)&\geq f_r(u^*(\zeta(t),t))-
    \dot\zeta(t)\,
    u^*(\zeta(t),t)
    \qquad\forall~t\in 
    \,]\tau_2,T[\,.
\end{aligned}
\end{equation}
Therefore, from~\eqref{eq:gest1},
\eqref{eq:gest2} we deduce that
\begin{equation}
\label{eq:gest3}
    g(t)\geq g^*(t)\qquad\forall~t\in [0,T],
\end{equation}
which, because of~\eqref{eq:Fg1}, yields~\eqref{eq:Fest1}.

Concerning~\eqref{eq:Fest2},  if $\zeta \in \mc C (u^*,x) \cap \mc C(u,x)$, then the inequality~\eqref{eq:Fest1} is verified 
also when $u$ and $u^*$ switch their places, so that we have 
$\mathcal{F}_t(u,\zeta) \geq \mathcal{F}_t(u^*,\zeta)$ for all $t\in [0,T]$,
thus proving
\begin{equation}
\label{eq:Fest3}
    \zeta \in \mc C (u^*,x) \cap \mc C(u,x)
    \quad \Longrightarrow\quad 
    \mathcal{F}(u,\zeta) = \mathcal{F}(u^*,\zeta).
\end{equation}
Next, given $\zeta\in \mathcal{C}(u^*,x)$,
assume that 
\begin{equation}
\label{eq:Fest4}
    \mathcal{F}(\zeta, u) = \mathcal{F}(\zeta, u^*).
\end{equation}
Since, by the above analysis we have~\eqref{eq:gest1}, it follows from~
\eqref{eq:Fest4} that $g(t)=g^*(t)$
for a.e. $t\in [0,T]$. Because of~\eqref{eq:ggstardef}, \eqref{eq:charspeed-1},  this in particular implies that, for a.e. $t\in [0,T]$, there holds
\begin{equation}
    \label{eq:gest5}
    f_r(u_r(t))=\gamma\qquad\text{if}
    \quad t\in [\tau_1,\tau_2]\,,
\end{equation}
and
\begin{equation}
\label{eq:gest4}
    \begin{aligned}
    &f_l(u(\zeta(t)+,t))-f'_l(u^*(\zeta(t),t))\,
    u(\zeta(t)+,t)=
        \\
        &\qquad\quad=
        f_l(u^*(\zeta(t),t))-f'_l(u^*(\zeta(t),t))\,
    u^*(\zeta(t),t)
        \qquad\text{if}\quad 
        t\in 
    \,]0,\tau_1[\,,
    \\
    \noalign{\medskip}
        &f_r(u(\zeta(t)+,t))-f'_r(u^*(\zeta(t),t))\,
        u(\zeta(t)+,t)=
        \\
        &\qquad\quad =
        f_r(u^*(\zeta(t),t))-f'_r(u^*(\zeta(t),t))\,
        u^*(\zeta(t),t)
        \qquad\text{if}\quad 
        t\in 
    \,]\tau_2,T[\,.
    \end{aligned}
\end{equation}
Since $f_l, f_r$ are strictly convex functions, we deduce from~\eqref{eq:gest4} that
$u(\zeta(t)+,t)=u^*(\zeta(t),t)$
for a.e. $t\in \,]0,\tau_1[\, \cup\,]\tau_2, T[$\,. If we repeat the same analysis taking  $u(\zeta(t)-,t)$ instead of $u(\zeta(t)+,t)$ in the definition~\eqref{eq:ggstardef} of $g$, we find that also $u(\zeta(t)-,t)=u^*(\zeta(t),t)$
for a.e. $t\in \,]0,\tau_1[\, \cup\,]\tau_2, T[$\,. This shows that 
the restriction of
$\zeta$ to $[0,\tau_1[\,$ 
and to $\,]\tau_2, T]$ is a classical genuine characteristic for $u$ as well,
as 
solution of $u_t+f_l(u)_x=0$, 
and  of $u_t+f_r(u)_x=0$, respectively.
Hence, by Remark~\ref{rem:oncharact}
we deduce that $\zeta$ is an $AB$-igc
also for $u$, which means that $\zeta\in \mathcal{C}(u,x)$, completing the proof of
\begin{equation}
\label{eq:Fest5}
    \mathcal{F}(\zeta, u) = \mathcal{F}(\zeta, u^*)
    \quad \Longrightarrow\quad 
    \zeta \in \mc C (u^*,x) \cap \mc C(u,x),
\end{equation}
and thus concluding the proof of the Lemma.
\end{proof}

\section{Proof of Theorem \ref{initialdataid}}\label{sec:firstthm}
In this section we provide a proof of the initial data identification Theorem \ref{initialdataid}. To this end we first state a technical Lemma that 
we are going to use repeatedly in the proof of Theorem~\ref{initialdataid}.

\begin{lemma}\label{Curves}
Let $u$ 
be an $AB$-entropy solution to~\eqref{conslaw}, \eqref{discflux}, 
and let $\alpha,\beta:[\tau,T] \to \mathbb R$, $\tau<T$, be two Lipschitz continuous maps such that $\alpha (t)\leq \beta(t)$ for all $t\in [\tau, T]$. Then it holds
\begin{equation}
\label{eq:divthm-int-equal}
\begin{aligned}
    &\int_{\alpha(T)}^{\beta(T)} u(x,T)\dif x-\int_{\alpha(\tau)}^{\beta(\tau)} u(x,\tau)\dif x~= \mathcal{F}_\tau(\alpha-, u)-
    \mathcal{F}_\tau(\beta+, u)\,.
\end{aligned}
\end{equation}
\end{lemma}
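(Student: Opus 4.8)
The plan is to prove the identity \eqref{eq:divthm-int-equal} by applying the divergence theorem (Green's formula) to the vector field $\big(f(x,u(x,t)),\, u(x,t)\big)$ on the region of the $(x,t)$-plane enclosed between the two curves $t\mapsto(\alpha(t),t)$ and $t\mapsto(\beta(t),t)$ over the time interval $[\tau,T]$. Since $u$ is a distributional solution of \eqref{conslaw} on $\R\times\,]0,+\infty[$, the field $\big(f(x,u),u\big)$ is divergence-free in the sense of distributions, hence the net flux across the boundary of that region vanishes. Reading off the four pieces of the boundary — the top segment at $t=T$ from $\alpha(T)$ to $\beta(T)$, the bottom segment at $t=\tau$ from $\alpha(\tau)$ to $\beta(\tau)$, and the two lateral curves $x=\alpha(t)$ and $x=\beta(t)$ — gives exactly \eqref{eq:divthm-int-equal}, with the boundary integrals along the lateral curves being precisely $\mathcal F_\tau(\alpha-,u)$ and $\mathcal F_\tau(\beta+,u)$ as defined in \eqref{eq:def-F} (using the appropriate one-sided traces, since $u$ need only have one-sided limits and the curves may coincide or touch the interface $x=0$).

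Concretely, first I would reduce to the case where $\alpha(t)<\beta(t)$ for all $t\in[\tau,T]$ by an approximation argument: replace $\beta$ by $\beta+\varepsilon$, establish the identity there, and let $\varepsilon\downarrow0$, using the $\mathbf L^1_{\mathrm{loc}}$-continuity in time of $u$ together with dominated convergence on the time integrals defining $\mathcal F$. Second, on the open region $\Omega=\{(x,t): \tau<t<T,\ \alpha(t)<x<\beta(t)\}$, I would regularize $u$ by mollification $u^\delta=u*\rho_\delta$ away from the interface, or more cleanly split $\Omega$ into $\Omega\cap\{x<0\}$ and $\Omega\cap\{x>0\}$ where $u$ satisfies the classical conservation laws $u_t+f_{l}(u)_x=0$ and $u_t+f_{r}(u)_x=0$ respectively (with a possible extra lateral piece along $x=0$). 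On each such subregion $u(\cdot,t)$ is $BV$ (Remark~\ref{rem:abentr-sol-prop1}) and admits strong one-sided traces, so the Gauss–Green formula for divergence-measure fields applies and yields the flux balance; the contributions along $x=0$ from the two sides cancel by the Rankine–Hugoniot relation \eqref{eq:RH} applied to $\alpha\equiv0$ (equivalently, by property (1) of Definition~\ref{defiAB}). Summing the subregion identities produces \eqref{eq:divthm-int-equal}.

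The main obstacle is the rigorous justification of the Gauss–Green / divergence-theorem step: $u$ is merely $\mathbf L^\infty$ with $BV$ spatial regularity for $t>0$ (not space-time $BV$), the curves $\alpha,\beta$ are only Lipschitz and may coincide with each other or run along the flux-interface $x=0$, and $\mathcal F$ is defined via one-sided traces of $u$ along these curves. Handling the traces carefully — invoking the strong-trace results cited in Remark~\ref{rem:abentr-sol-prop1} (via \cite{panov,MR1869441}) for $x=0$, and the standard fact that a $BV$ function restricted to a Lipschitz curve transversal, or the divergence-measure-field trace theory, gives well-defined lateral flux integrals — is where the technical work lies. Once the flux across each of the four boundary pieces is identified with the corresponding term, the identity \eqref{eq:divthm-int-equal} follows immediately, and the orientation/sign bookkeeping (outer normal pointing left along $x=\alpha(t)$, right along $x=\beta(t)$, up at $t=T$, down at $t=\tau$) accounts for the signs and for why $\mathcal F$ uses the $\alpha-$ trace and the $\beta+$ trace.
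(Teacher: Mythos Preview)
Your proposal is correct and follows essentially the same approach as the paper: apply the divergence theorem to the vector field $(f(x,u),u)$ on the region $\Delta=\{(x,t):\alpha(t)\le x\le\beta(t),\ t\in[\tau,T]\}$, splitting it at the interface and invoking the $BV$ regularity and strong-trace properties recorded in Remark~\ref{rem:abentr-sol-prop1}. The paper's version is terser---it splits $\Delta$ at $x=\rho$ rather than $x=0$ and sends $\rho\to0$, whereas you split directly at $x=0$ and add the $\beta\mapsto\beta+\varepsilon$ separation step---but the substance is the same.
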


\begin{proof}
Observe that, by property (1) of Definition~\ref{defiAB}, $u$ is a 
weak distributional solution 
to~\eqref{conslaw},~\eqref{discflux}.
Moreover, by Remark~\ref{rem:abentr-sol-prop1}, $u(t,\cdot)$
is a function of locally bounded variation 
on $\{x<0\}$, $\{x>0\}$, 
 and it admits left and right strong traces at $x = 0$, for all $t>0$.
Thus, we can recover 
   the equality~\eqref{eq:divthm-int-equal} 
   recalling definition~\eqref{eq:def-F}, applying the divergence theorem to the vector field $(f(x,u), u)$ on each domain 
   $\Delta\cap \{x<\rho\}$, $\Delta\cap\{x>\rho\}$, 
   with $\Delta\doteq\{(x,t)\; | \; \alpha(t)\leq x\leq \beta(t), \ t\in[t_0, T]  \}$,
   and then taking the limit as $\rho\to 0$.
\end{proof}

\smallskip

\begin{proof}[Proof of Theorem~\ref{initialdataid}]
Given $\omega^T \in \mathcal{A}^{[AB]}(T)$,
let $u^*$ be the 
 $AB$-entropy solution defined by~\eqref{eq:vertexinitialset}-\eqref{eq:sol-vertexinitialset}.

 \textbf{1.} We will show that if 
 $u_0 \in  \mc I_T^{[AB]}(\omega^T)$, then for every point $\overline x\in\R$ 
 there exists $\overline y \in \mathcal{C}_0(u^*,\overline x)$ such that
there hold~\eqref{condleq-2}. The proof of~\eqref{condgeq-2} is entirely similar.
By Proposition~\ref{Clemma}, 
choose $\zeta_{\ol x} \in 
\mathcal{C}(u^*,\ol x) \cap \mathcal{C}(u,\ol x)$,
with $u \doteq \sabp u_0(\cdot)$,
and set $\ol y \doteq \zeta_{\ol x}(0)$.
Then, consider any $y < \min \mathcal{C}_0(u^*,\ol x)$.  
By Lemma \ref{lemma:Csurjective},
and because of Proposition~\ref{lemma:Cnotempty}-(iv), there will be some $x< \bar x$,
and some $\zeta_x \in \mathcal{C}(u^*,x)$, such that
$y = \zeta_x(0)$.
Hence, applying Lemma \ref{Flemma}, we deduce that 
\begin{equation}\label{Fineq}
\mathcal{F}(u,\zeta_x) \geq \mathcal{F}(u^*, \zeta_x).
\end{equation} 
Moreover, since  $\zeta_{\bar x} \in 
\mathcal{C}(u^*,\ol x) \cap \mathcal{C}(u,\ol x)$, by the second part of Lemma \ref{Flemma} we have
\begin{equation}\label{Feq}
\mathcal{F}(u^*,\zeta_{\bar x}) = \mathcal{F}(u, \zeta_{\bar x}).
\end{equation}
On the other hand, applying Lemma \ref{Curves} to the solution $u^*$ with $\alpha = \zeta_x, \beta = \zeta_{\ol x}$, $\tau=0$, 
and recalling Remark~\ref{rem:def-F-on-gic}, one obtains
\begin{equation}
\label{eq:Fineq2}
\int_x^{\bar x} \omega^T(\xi) \dif \xi - \int_y^{\bar y} u_0^*(\xi) \dif \xi = \mathcal{F}(u^*,\zeta_x)-\mathcal{F}(u^*,\zeta_{\bar x}).
\end{equation}
With the same arguments, applying Lemma \ref{Curves} to the solution $u$, and relying on~\eqref{Fineq}, \eqref{Feq},
we find
\begin{equation}
\label{eq:Fineq3}
\int_x^{\ol x} \omega^T(\xi) \dif \xi-\int_y^{\ol y} u_0(\xi) \dif \xi = \mathcal{F}(u, \zeta_x) -\mathcal{F}(u, \zeta_{\ol x})\geq  \mathcal{F}(u^*,\zeta_x)-\mathcal{F}(u^*,\zeta_{\ol x}).
\end{equation}
Combining~\eqref{eq:Fineq2}, \eqref{eq:Fineq3} we deduce
\begin{equation*}
\label{eq:Fineq1}
 - \int_y^{\bar y} u_0(\xi) \dif \xi \geq -\int_y^{\ol y} u_0^*(\xi) \dif \xi,
\end{equation*}
which yields~\eqref{condleq-2}.
\\

\begin{figure}
\centering

\tikzset{every picture/.style={line width=0.75pt}} 

\begin{tikzpicture}[x=0.75pt,y=0.75pt,yscale=-0.7,xscale=0.7]

\draw    (10,230) -- (281.09,230) ;
\draw [shift={(284.09,230)}, rotate = 180] [fill={rgb, 255:red, 0; green, 0; blue, 0 }  ][line width=0.08]  [draw opacity=0] (5.36,-2.57) -- (0,0) -- (5.36,2.57) -- cycle    ;
\draw    (340,230) -- (588.14,230) ;
\draw [shift={(591.14,230)}, rotate = 180] [fill={rgb, 255:red, 0; green, 0; blue, 0 }  ][line width=0.08]  [draw opacity=0] (5.36,-2.57) -- (0,0) -- (5.36,2.57) -- cycle    ;
\draw    (454.09,230) -- (454.09,33) ;
\draw [shift={(454.09,30)}, rotate = 90] [fill={rgb, 255:red, 0; green, 0; blue, 0 }  ][line width=0.08]  [draw opacity=0] (5.36,-2.57) -- (0,0) -- (5.36,2.57) -- cycle    ;
\draw    (340,60) -- (594.09,60.77) ;
\draw    (10,60.77) -- (284.09,60.77) ;
\draw    (147.05,230) -- (147.05,33) ;
\draw [shift={(147.05,30)}, rotate = 90] [fill={rgb, 255:red, 0; green, 0; blue, 0 }  ][line width=0.08]  [draw opacity=0] (5.36,-2.57) -- (0,0) -- (5.36,2.57) -- cycle    ;
\draw [line width=1.5]    (55.68,60.77) -- (147.05,106.92) -- (223.18,230) ;
\draw [line width=1.5]    (223.18,60.77) -- (147.05,153.08) -- (147.05,183.85) -- (70.91,230) ;
\draw [line width=1.5]    (534.09,60) -- (544.09,230) ;
\draw [line width=1.5]    (376.36,60.77) -- (454.09,110) -- (454.09,170) -- (494.09,230) ;
\draw  [dash pattern={on 0.84pt off 2.51pt}]  (148.53,134.4) -- (280.53,133.73) ;

\draw (368.09,82.4) node [anchor=north west][inner sep=0.75pt]  [font=\footnotesize]  {$\xi _{1}$};
\draw (545.09,142.4) node [anchor=north west][inner sep=0.75pt]  [font=\footnotesize]  {$\xi _{2}$};
\draw (71,84.4) node [anchor=north west][inner sep=0.75pt]  [font=\footnotesize]  {$\xi _{1}$};
\draw (215.33,85.73) node [anchor=north west][inner sep=0.75pt]  [font=\footnotesize]  {$\xi _{2}$};
\draw (284,122.07) node [anchor=north west][inner sep=0.75pt]  [font=\footnotesize]  {$\tau $};
\draw (368,40.4) node [anchor=north west][inner sep=0.75pt]  [font=\footnotesize]  {$x_{1}$};
\draw (528.67,41.07) node [anchor=north west][inner sep=0.75pt]  [font=\footnotesize]  {$x_{2}$};
\draw (49.33,40.4) node [anchor=north west][inner sep=0.75pt]  [font=\footnotesize]  {$x_{1}$};
\draw (221.33,39.07) node [anchor=north west][inner sep=0.75pt]  [font=\footnotesize]  {$x_{2}$};
\draw (586,37.07) node [anchor=north west][inner sep=0.75pt]  [font=\footnotesize]  {$\omega $};
\draw (272,37.73) node [anchor=north west][inner sep=0.75pt]  [font=\footnotesize]  {$\omega $};

\end{tikzpicture}

\caption{{\scshape Case 1}: $\max \mathcal{C}_0(u,x_1) \geq \min \mathcal{C}_0(u^*,x_2)$ (right);
{\scshape Case 2}: $\max \mathcal{C}_0(u,x_1) < \min \mathcal{C}_0(u^*,x_2)$ (left) \\
}
\label{figproofidentification}
\end{figure}
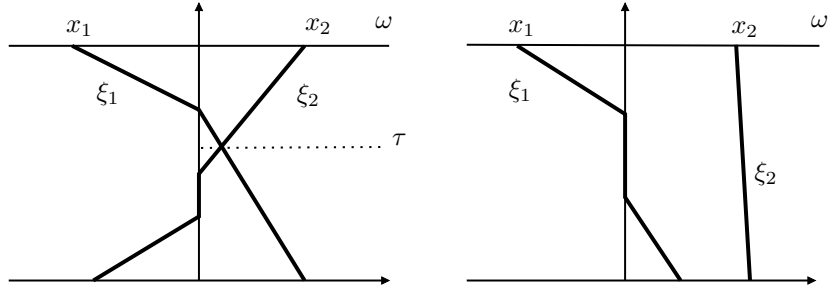

 \textbf{2.} 
Now we prove that if $u_0 \in \mathbf L^{\infty}(\mathbb R)$ satisfies \eqref{condleq-2}, \eqref{condgeq-2}, then $\sabpT u_0 = \omega^T$. 
Namely, we are going to prove that
under the conditions \eqref{condleq-2}, \eqref{condgeq-2},
there hold
\begin{equation}\label{Lebesgueeq}
\int_{x_1}^{x_2} \big(\omega^T(x) - \sabpT  u_0(x)\big) \dif x = 0, 
\quad\ \forall~x_1< x_2, 
\end{equation}
which clearly implies that $\sabpT u_0 = \omega^T$. 

Towards a proof of~\eqref{Lebesgueeq}
we will first show that 
\begin{equation}\label{Lebesguegeq}
\int_{x_1}^{x_2} \big(\omega^T(x) - \sabpT u_0(x)\big) \dif x \geq 0,
\quad\ \forall~x_1< x_2, 
\end{equation}
distinguishing two cases.
\\

\noindent {\scshape Case 1.} $\max \mathcal{C}_0(u,x_1) \geq \min \mathcal{C}_0(u^*,x_2)$ (see Figure \ref{figproofidentification}, right). Then we can choose $\zeta_1 \in \mathcal{C}(u,x_1)$ and $\zeta_2 \in \mathcal{C}(u^*,x_2)$ such that $\zeta_1(0)\geq \zeta_2(0)$. By continuity there will be a point $\tau \in [0,T[\,$ such that $\zeta_1(\tau) = \zeta_2(\tau)$,
$\zeta_1(t) < \zeta_2(t)$
for all $t\in\,]\tau, T]$. Applying Lemma \ref{Curves} to the solution $u^*$, with the curves 
$\alpha=\zeta_1$,
$\beta=\zeta_2$,
and using the first part of Lemma \ref{Flemma} 
for $\zeta_1$, we obtain
\begin{equation}
\int_{x_1}^{x_2} \omega^T(x) \dif x = \mathcal{F}_{\tau}(u^*,\zeta_1)- \mathcal{F}_{\tau}(u^*,\zeta_2) \geq \mathcal{F}_{\tau}(u,\zeta_1)- \mathcal{F}_{\tau}(u^*,\zeta_2).
\end{equation}
Next, applying again Lemma \ref{Curves} to the solution $u$, with the curves 
$\alpha=\zeta_1$,
$\beta=\zeta_2$, 
and then Lemma \ref{Flemma}
for $\zeta_2$, we obtain
\begin{equation}
\int_{x_1}^{x_2} \sabpT u_0(x) \dif x = \mathcal{F}_{\tau}(u,\zeta_1)- \mathcal{F}_{\tau}(u,\zeta_2) \leq \mathcal{F}_{\tau}(u,\zeta_1)- \mathcal{F}_{\tau}(u^*,\zeta_2).
\end{equation}
Taking the difference of the above two inequalities, we derive \eqref{Lebesguegeq}. Note that in this case we are not using
the conditions \eqref{condleq-2}, \eqref{condgeq-2} to establish~\eqref{Lebesguegeq}.
\\

\noindent {\scshape Case 2.} $\max \mathcal{C}_0(u,x_1) < \min \mathcal{C}_0(u^*,x_2)$ (see Figure \ref{figproofidentification}, left). Choose any $\zeta_1 \in \mathcal{C}_0(u,x_1)$, and set $y \stackrel{\cdot}{=} \zeta_1(0)$. Since $y <\min \mathcal{C}_0(u^*,x_2)$, invoking condition~\eqref{condleq-2}
we find that there exists 
$\zeta_2 \in\mathcal{C}(u^*,x_2)$ such that, setting $y_2=\zeta_2(0)$,
there holds
\begin{equation}\label{condleqy_2}
\int_y^{y_2} u_0(x) \dif x \leq \int_y^{y_2}u^*_0(x) \dif x\,.
\end{equation}
Note that, since $\max \mathcal{C}_0(u,x_1) < \min \mathcal{C}_0(u^*,x_2)$, we have $\zeta_1(t)<\zeta_2(t)$ for all $t\in [0,T]$.
Hence, applying Lemma \ref{Curves} to $u^*$, with the curves 
$\alpha=\zeta_1$,
$\beta=\zeta_2$, and  using the first part of Lemma \ref{Flemma} 
for $\zeta_1$, we obtain
\begin{equation}
    \begin{aligned}
        \int_{x_1}^{x_2} \omega^T(x) \dif x &= \mathcal{F}(u^*,\zeta_1)- \mathcal{F}(u^*,\zeta_2) + \int_y^{y_2}u^*_0(x) \dif x \\ 
        &\geq \mathcal{F}(u,\zeta_1)- \mathcal{F}(u^*,\zeta_2)+ \int_y^{y_2}u^*_0(x) \dif x\,.
    \end{aligned}
\end{equation}
Next, applying Lemma \ref{Curves} to $u$, with the curves 
$\alpha=\zeta_1$,
$\beta=\zeta_2$, and  using the first part of Lemma \ref{Flemma} 
for $\zeta_2$, we obtain
\begin{equation}
\begin{aligned}
    \int_{x_1}^{x_2} \sabpT u_0(x) \dif x &= \mathcal{F}(u,\zeta_1)- \mathcal{F}_{\tau}(u,\zeta_2)+\int_y^{y_2}u_0(x) \dif x \\ 
    &\leq \mathcal{F}(u,\zeta_1)- \mathcal{F}(u^*,\zeta_2)+\int_y^{y_2}u_0(x) \dif x\,.
\end{aligned}
\end{equation}
Taking the difference of the  above two inequalities, and using \eqref{condleqy_2}, we derive
\begin{equation}
\int_{x_1}^{x_2} \omega^T(x) \dif x-\int_{x_1}^{x_2} \sabpt u_0(x) \dif x \geq \int_y^{y_2}u^*_0(x) \dif x-\int_y^{y_2}u_0(x) \dif x \geq 0
\end{equation}
which proves \eqref{Lebesguegeq} also in {\scshape Case 2.}
\smallskip

The proof of the opposite inequality
of~\eqref{Lebesguegeq} is entirely symmetric and is accordingly omitted. 
Thus the proof of~\eqref{Lebesgueeq}
is completed, and this concludes the proof of the Theorem.
\end{proof}

\begin{remark}
\label{rem:continuity-hyp}
    By the proof of Theorem~\ref{initialdataid} it follows that it is sufficient to assume:
$$\begin{aligned}
    &\text{\it for every point  $\overline x\in\R$ 
of continuity  of $\omega^T$, 
there exists $\overline y \in \mathcal{C}_0(u^*,\overline x)$}\\
&\text{\it such that
there hold~\eqref{condleq-2}, \eqref{condgeq-2},}
\end{aligned}
$$
to conclude that $u_0 \in  \mc I_T^{[AB]}(\omega^T)$.
In fact, in order to show that $\omega^T=\sabpT u_0$,
it is sufficient to prove that~\eqref{Lebesgueeq}
is verified whenever $x_1, x_2$ are points of continuity for $\omega^T$, 
since they are  dense in $\R$.
\end{remark}

\section{Proof of Theorem \ref{geometrical properties}}\label{sec:secthm}

\begin{proof}
Given $\omega^T \in \mathcal{A}^{[AB]}(T)$,
let $u^*$ be the 
 $AB$-entropy solution defined by~\eqref{eq:vertexinitialset}-\eqref{eq:sol-vertexinitialset}.
We prove the Theorem point by point, in order.

\vspace{0.5cm}

 \textbf{1.} Proof of (i). 
First assume that $\left|\mathcal{C}_0(u^*,x)\right| = 1$ for every $x \in \mathbb R$. We will show that any initial data $u_0\in \mathcal{I}_T^{[AB]}(\omega^T)$ satisfies
\begin{equation}\label{eq:initidatumsingleton}
\int_{y_1}^{y_2} u_0(x) \dif x = \int_{y_1}^{y_2} u_0^* (x) \dif x, \quad \forall~y_1 < y_2, 
\end{equation}
and this uniquely identifies $u_0$ as an element of $L^{\infty}(\mathbb R)$, 
thus proving that $\mathcal{I}_T^{[AB]}(\omega^T)=\{u_0^*\}$.
Given any two points $y_1 < y_2$,  by Lemma \ref{lemma:Csurjective}
and because of Proposition~\ref{lemma:Cnotempty}-(iv), there exist $x_1<x_2$, and  
$\zeta_i \in \mathcal{C}(u^*,x_i)$,
$i=1,2$, such that $\zeta_i(0)=y_i$,
$i=1,2$.
Then, applying~\eqref{condleq-2} 
of Theorem~\ref{initialdataid} we find
\begin{equation}
\int_{y_1}^{y_2} u_0(x) \dif x \leq \int_{y_1}^{y_2} u_0^* (x) \dif x\,.
\end{equation}
Next, if we exchange the role of $y_1$ and $y_2$, applying this time~\eqref{condgeq-2}
of Theorem~\ref{initialdataid} we find the opposite inequality
\begin{equation}
\int_{y_1}^{y_2} u_0(x) \dif x \geq \int_{y_1}^{y_2} u_0^* (x) \dif x\,.
\end{equation}
Combining together the above two inequalities we obtain~\eqref{eq:initidatumsingleton}.

Conversely, assume that $\mathcal{I}_T^{[AB]}(\omega^T) = \{u_0^*\}$, and  by contradiction suppose that there is some $\widetilde{x} \in \mathbb R$ such that $\left|\mathcal{C}_0(u^*,\widetilde{x}\,)\right| \neq 1$. 
Using the characterization of Theorem \ref{initialdataid}
we will then show that there exist infinitely many initial data
$u_0 \neq u^*_0$ such that $\sabpT u_0 = \omega^T$. 
To this end, set
\begin{equation*}
    \mathrm{conv} \, \mc C_0(u^*, \widetilde x\,)\doteq 
    [\min \mc C_0(u^*, \widetilde x\,),\, \max \mc C_0(u^*, \widetilde x\,)]\,,
\end{equation*}
and let $\mathbf  L^{\infty}(\mathrm{conv} \, \mc C_0(u^*, \widetilde x\,))$ denote the space of 
$\mathbf  L^{\infty}(\R)$ function with essential support in $\mathrm{conv} \, \mc C_0(u^*, \widetilde x\,))$. Note that $\mathrm{conv} \, \mc C_0(u^*, \widetilde x\,)$ is a non trivial interval because
$\left|\mathcal{C}_0(u^*,\widetilde{x}\,)\right| \neq 1$, and hence $\mathbf  L^{\infty}(\mathrm{conv} \, \mc C_0(u^*, \widetilde x\,))$ is an infinite dimensional space. Next,
consider the infinite dimensional cone
$\mathrm{V}_0 \subset \mathbf  L^{\infty}(\mathrm{conv} \, \mc C_0(u^*, \widetilde x\,))$
consisting of all $v_0\in \mathbf  L^{\infty}(\mathrm{conv} \, \mc C_0(u^*, \widetilde x\,))$
that satisfy
\begin{equation}\label{eq:v_0props}
\begin{aligned}
     \int_{y}^{\max \mc C_0(u^*, \widetilde x\,)} v_0(x) \dif x &\leq 0, \qquad \forall \; y \in \mathrm{conv}\;  \mc C_0(u^*, \widetilde x\,),\\
\int_{\min \mc C_0(u^*, \widetilde x\,)}^y v_0(x) \dif x &\geq 0, \qquad \forall \; y \in \mathrm{conv}\;  \mc C_0(u^*, \widetilde x\,).
     \end{aligned}
\end{equation}
Note that~\eqref{eq:v_0props} in particular imply
\begin{equation}
\label{eq:V-0int-eq}
    \int_{\min \mc C_0(u^*, \widetilde x\,)}^{\max \mc C_0(u^*, \widetilde x\,)} v_0(x) \dif x = 0\,.
\end{equation}
We will show that 
\begin{equation}
\label{eq:Vcone-1}
    V \doteq u_0^* + V_0 \subset \mathcal{I}_T^{[AB]}(\omega^T).
\end{equation}
Relying on Theorem~\ref{initialdataid}
this is equivalent to prove that, for any $v_0\in V_0$, and for every $\ol x\in\R$, there exists $\ol y \in \mc C_0(u^*, \ol x)$ such that~\eqref{condleq-2}, \eqref{condgeq-2} hold for $u_0\doteq u_0^*+v_0$. We will verify only~\eqref{condleq-2}, the proof of the other inequality being entirely symmetric. 
 

Then, consider first any $\ol x \leq \widetilde{x}$,
and choose $\ol y = \min \mc C_0(u^*, \ol x)$. 
Observe that, for every $y < \min \mc C_0(u^*, \ol x)$, we have $u_0 = u_0^*$ on the interval $[y,\, \ol y]$ since $\ol x \leq \widetilde x$, together with
 Proposition~\ref{lemma:Cnotempty}-(iv), implies
$$\ol y 
\leq \min \mc C_0(u^*, \widetilde x),$$
and hence $v_0 =0$ on  $[y,\, \ol y]$,
because the essential support of $v_0$ is contained in $\mathrm{conv} \, \mc C_0(u^*, \widetilde x\,))$. 
This implies that,
 for every $y < \min \mc C_0(u^*, \ol x)$,
 we have
\begin{equation}
\label{eq:V-0int-eq-2}
    \int_y^{\ol y} u_0(x) \dif x = \int_y^{\ol y} u_0^*(x) \dif x\,,
\end{equation}
    which proves~\eqref{condleq-2} as an equality. 

Next, consider any $\ol x > \widetilde{x}$, and choose $\ol y = \max \mc C_0(u^*, \ol x)$. Then, for every $y < \min \mc C_0(u^*, \ol x)$, 
one of the following three cases occurs:

\begin{enumerate}
[leftmargin=48pt]
    \item[{\scshape Case 1.}] If $y \in \,]\max \mc C_0(u^*, \tilde x), \min \mc C_0(u^*, \bar x)[\,$, then~\eqref{condleq-2} holds again as an equality,
because $u_0$  coincides with $u_0^*$ in the interval $[y,\, \ol y]$ as in the case $\ol x \leq \widetilde{x}$ considered above,
and thus~\eqref{eq:V-0int-eq-2} is verified.
\item[{\scshape Case 2.}] If $y \in \mathrm{conv} \; \mc C_0(u^*, \tilde x)$, then by \eqref{eq:v_0props} we have
$$
\begin{aligned}
    \int_y^{\ol y} u_0(x) \dif x 
    &= \int_y^{\max \mc C_0(u^*, \widetilde x)} u_0(x) \dif x + \int _{\max \mc C_0(u^*, \widetilde x)}^{\ol y} u^*_0(x) \dif x
    \\
    &\leq \int_y^{\ol y} u_0^*(x) \dif x\,,
\end{aligned}
$$
which proves~\eqref{condleq-2}.
\item[{\scshape Case 3.}] If $y < \min \mc C_0(u^*, \tilde x)$, 
we obtain~\eqref{condleq-2} relying on~\eqref{eq:V-0int-eq}, since
$$
\begin{aligned}
    \hspace{30pt}\int_y^{\ol y}u_0(x) \dif x 
    &= \!\int_y^{\min \mc C_0(u^*, \widetilde x)} \!u_0^*(x) \dif x +\! \int_{\min \mc C_0(u^*, \widetilde x)}^{\max  \mc C_0(u^*, \widetilde x)} \!u_0(x) \dif x +\! \int_{\max  \mc C_0(u^*, \widetilde x)}^{\bar y} \!u_0^*(x) \dif x 
    \\
    &= \int_y^{\bar y}u^*_0(x) \dif x\,.
\end{aligned}
$$
\end{enumerate}
Thus, 
for all $u_0=u_0^*+v_0$, $v_0\in V_0$,
and for every $\ol x\in\R$, there exists $\ol y \in \mc C_0(u^*, \ol x)$ such that~\eqref{condleq-2},
\eqref{condgeq-2}
hold.
Hence~\eqref{eq:Vcone-1} is verified,
which contradicts the assumption $\mathcal{I}_T^{[AB]}(\omega^T) = \{u_0^*\}$, and thus
completes the proof of the first part of property (i).

Finally, observe that if $x$ is a point of discontinuity for $\omega^T$, then one can consider 
the $AB$-gics  $\vartheta_{x,-}, \vartheta_{x,+}: [0,T]\to\R$
that are 
the minimal and maximal $AB$-gics for $u^*$
reaching at time~$T$ the point $x$
 (e.g. see point {\bf 2} of
the proof of Proposition~\ref{Clemma}).
Since $\vartheta_{x,-}(0)\neq \vartheta_{x,+}(0)$
if $x\neq 0$,
and because $\{\vartheta_{x,-}(0),\,\vartheta_{x,+}(0)\}\subset\mc C_0(u^*, \tilde x)$,
this implies $\left|\mathcal{C}_0(u^*,x\,)\right| \neq 1$, thus proving by contradiction that if $\mathcal{I}_T^{[AB]}(\omega^T)$ is a singleton, then $\omega^T$ must be continuous at any point $x\neq 0$.
This concludes the proof of property (i).

\vspace{0.3cm}
 \textbf{2.} Proof of (ii). 
To prove that the set $\mathcal{I}_T^{[AB]}(\omega^T)-u_0^*$ is a linear cone,
we will show that, for every $u_0 \in \mathcal{I}_T^{[AB]}(\omega^T)$ and $\lambda \geq 0$, it holds $u_0^* + \lambda(u_0 -u_0^*) \in \mathcal{I}_T^{[AB]}(\omega^T)$. To see this, applying Theorem \ref{initialdataid}
it's sufficient to prove that, given any $\ol x\in\R$, 
there exists $\ol y \in \mc C_0(u^*, \ol x)$
such that
~\eqref{condleq-2}, \eqref{condgeq-2}  hold
with $u_0^* + \lambda(u_0 -u_0^*)$ in place of $u_0$. 
Since $u_0 \in \mathcal{I}_T^{[AB]}(\omega^T)$,
by Theorem~\ref{initialdataid} we know that
there is some $\ol y\in \mc C_0(u^*, \ol x)$ such that \eqref{condleq-2} holds. Then, for all $y < \min \mathcal{C}_0(u^*,\overline x)$,
one finds
\begin{equation*}
\int_y^{\ol y} \big(u_0^*(x)+\lambda(u_0(x)-u_0^*(x))\big) \dif x \leq \int_y^{\ol y} \big(u_0^*(x)+\lambda(u_0^*(x)-u_0^*(x))\big) \dif x = \int_y^{\ol y} u_0^*(x) \dif x\,.
\end{equation*}
This proves that~\eqref{condleq-2} is verified
with $u_0^* + \lambda(u_0 -u_0^*)$ in place of $u_0$.
The proof that also \eqref{condgeq-2} holds, 
is entirely symmetric. 

Next, we prove that $u_0^*$ is an extremal point
of $\mathcal{I}_T^{[AB]}(\omega^T)$.
Assume by contradiction that there exist
$u_{0,i} \in \mathcal{I}_T^{[AB]}(\omega^T)$, 
$u_{0,i}\neq u_0^*$, $i = 1,2$, and 
$\lambda \in \,]0,1[\,$, such that 
\begin{equation}
\label{eq:uo*convexcomb}
    u^*_0 = \lambda u_{0,1} + (1-\lambda) u_{0,2}\,.
\end{equation}
Take any $\ol x \in \R$ for which  $\mathcal{C}_0(u^*, \ol x)$ is a singleton 
(one can choose $\ol x$ as a point of continuity 
for~$\omega^T$ belonging to the set $]-\infty, \ms L[\, \cup\,]\ms R, +\infty[$, with $\ms L, \ms R$ as in~\eqref{eq:LR-def}), and call $\ol y$ the unique element of $\mathcal{C}_0(u^*, \ol x)$.  
Because of~\eqref{eq:uo*convexcomb} it holds
\begin{equation}\label{convexinteq}
\int_{\bar y}^y u_0^*(x) \dif x=
\lambda \int_{\bar y}^y u_{0,1}(x) \dif x+ (1-\lambda) \int_{\bar y}^y u_{0,2}(x) \dif x, \qquad \forall \; y \in \mathbb R\,.
\end{equation}
Then, since $u_{0,1}, u_{0,2}$ are different from $u_0^*$, there must be some $y \in \mathbb R$ such that one of the following three cases occurs:
\begin{enumerate}
[leftmargin=48pt]
\item[{\scshape Case 1.}]
    \begin{equation}
    \label{convexinteq-2}
\int_{\ol y}^y u_{0,1}(x) \dif x \neq \int_{\ol y}^y u_0^*(x) \dif x \qquad \text{and} \qquad  \int_{\ol y}^y u_{0,2}(x) \dif x = \int_{\ol y}^y u_0^*(x) \dif x.
\end{equation}
\item[{\scshape Case 2.}]
    \begin{equation}
\int_{\ol y}^y u_{0,1}(x) \dif x = \int_{\ol y}^y u_0^*(x) \dif x \qquad \text{and} \qquad  \int_{\ol y}^y u_{0,2}(x) \dif x \neq \int_{\ol y}^y u_0^*(x) \dif x.
\end{equation}
    \item[{\scshape Case 3.}]
    \begin{equation}
\int_{\ol y}^y u_{0,1}(x) \dif x \neq \int_{\ol y}^y u_0^*(x) \dif x \qquad \text{and} \qquad  \int_{\ol y}^y u_{0,2}(x) \dif x \neq \int_{\ol y}^y u_0^*(x) \dif x.
\end{equation}
    \end{enumerate}
Assume that {\scshape Case 1} holds, with $y \neq \ol y$. Then, applying conditions~\eqref{condleq-2}, \eqref{condgeq-2} of Theorem~\ref{initialdataid} to~$u_0^1$, we find that
\begin{equation}
\label{convexinteq-3}
\int_{\ol y}^y u_{0,1}(x) \dif x > \int_{\ol y}^y u_0^* \dif x.
\end{equation}
But~\eqref{convexinteq-3}, together with the equality in~\eqref{convexinteq-2}, is in contradiction with \eqref{convexinteq}. 
The analysis of the other two cases is entirely similar, thus it is omitted.
This proves that $u_0^*$ is an extremal point of $\mathcal{I}_T^{[AB]}(\omega^T)$ (and of course it is unique since $u_0^*$ is the vertex of the affine cone $\mathcal{I}_T^{[AB]}(\omega^T)$), and thus concludes the proof of property (ii).

\vspace{0.3cm}
 \textbf{3.} Proof of (iii). 
 We first show that, if condition~\eqref{eq:refinedconditionconvex-2}
 is verified than the set $\mathcal{I}_T^{[AB]}(\omega^T)$
is convex.
Given $u_{0,1}, u_{0,2} \in \mathcal{I}_T^{[AB]}(\omega^T)$, and $\lambda \in \,]0,1[\,$, let $ \ol x \in \,]\ms L,\, \ms R[\,$ be a point of continuity for~$\omega^T$. 
By Theorem~\ref{initialdataid}, and because 
$\mathcal{C}_0(u^*,\ol x)$ is a singleton $\{\ol y\}$, we
know that  
there hold
\begin{equation}\label{ybar12}
\int_y^{\ol y} u_{0,1}(x) \dif x \leq \int_y^{\ol y}u_0^*(x) \dif x , \qquad \int_y^{\ol y} u_{0,2}(x) \dif x \leq \int_y^{\ol y}u_0^*(x) \dif x, \quad \forall \; y < \ol y.
\end{equation}
Then, 
using \eqref{ybar12}, we derive
\begin{equation}
\label{eq:convexity-ineq}
\int_y^{\ol y} \big(\lambda u_{0,1}(x) + (1-\lambda)u_{0,2}(x)\big)\dif x \leq \int_y^{\ol y} u^*(x) \dif x, \qquad \forall \; y < \ol y, 
\quad\forall~\lambda \in \,]0,1[\,,
\end{equation}
so that $\lambda u_{0,1}+(1-\lambda) u_{0,2}$ satisfies \eqref{condleq-2}
for all $\lambda \in \,]0,1[\,$,
and $\ol y\in \mathcal{C}_0(u^*,\ol x)$
with $ \ol x \in \,]\ms L,\, \ms R[\,$
 of continuity for~$\omega^T$. The proof that also \eqref{condgeq-2} holds for the same $\ol y$
 is entirely similar and is accordingly omitted. Next, consider a point $ \ol x \in \,]-\infty,\,\ms L[\ \cup\ ]\ms R, +\infty[\,$
 of continuity for~$\omega^T$. 
Notice that by definition~\eqref{eq:LR-def}
the classical backward characteristics starting
from $(\ol x, T)$ never cross the interface $x=0$
at positive times. Therefore the unique $AB$-gic
reaching the point $\ol x$ at time $t=T$ is a classical genuine characteristic starting say at
$\ol y$ at time $t=0$. Hence $\mathcal{C}_0(u^*,\ol x)=\{\ol y\}$, and we can proceed as above to show 
that $\lambda u_{0,1}+(1-\lambda) u_{0,2}$ satisfies~\eqref{condleq-2}, \eqref{condgeq-2} for all $\lambda \in \,]0,1[\,$,
also when $\ol y\in \mathcal{C}_0(u^*,\ol x)$
with $ \ol x \in \,]-\infty,\,\ms L[\ \cup\ ]\ms R, +\infty[\,$
 of continuity for~$\omega^T$.
Then, by Remark~\ref{rem:continuity-hyp} we can conclude that $\lambda u_{0,1}+(1-\lambda) u_{0,2}\in \mathcal{I}_T^{[AB]}(\omega^T)$,
for all $\lambda \in \,]0,1[\,$.


Now assume that condition~\eqref{refinedconditionconvex} is verified.
By the above analysis it is clear that in order to prove 
the convexity of $\mathcal{I}_T^{[AB]}(\omega^T)$
 it is sufficient to show that, 
for any $ \ol x \in \,]\ms L,\, \ms R[\,$  of continuity for~$\omega^T$ there exists 
$\ol y \in \mathcal{C}_0(u^*, \ol x)$ such that
there holds
\begin{equation}
\label{eq:convexity-ineq-2}
    \int_y^{\ol y} \big(\lambda u_{0,1}(x) + (1-\lambda)u_{0,2}(x)\big)\dif x \leq \int_y^{\ol y} u^*(x) \dif x, \qquad \forall \; y < \min \mathcal{C}_0(u^*,\ol x)\,, 
\quad\forall~\lambda \in \,]0,1[\,.
\end{equation}
The problem in this case is the following.
Since  $\mathcal{C}_0(u^*, \ol x)$ may not be a singleton, by Theorem~\ref{initialdataid}
we know that there will be
in general 
$\ol y_i\in \mathcal{C}_0(u^*, \ol x)$, $i=1,2$, $\ol y_1\neq \ol y_2$,
such that there hold
\begin{equation}
\label{ybar12-2}
\int_y^{\ol y_1} u_{0,1}(x) \dif x \leq \int_y^{\ol y_1}u_0^*(x) \dif x , \qquad \int_y^{\ol y_2} u_{0,2}(x) \dif x \leq \int_y^{\ol y_2}u_0^*(x) \dif x, \quad \forall \; y < 
\min \mathcal{C}_0(u^*,\ol x).
\end{equation}
Here the choice of $\ol y_i$
depends on the initial datum $u_{0,i}\in \mathcal{C}_0(u^*, \ol x)$, $i=1,2$.
Hence, 
we cannot rely on~\eqref{ybar12-2} to  derive immediately the existence of $\ol y \in \mathcal{C}_0(u^*, \ol x)$
such that~\eqref{eq:convexity-ineq-2} holds.
However, thanks to the assumption~\eqref{refinedconditionconvex} we can show that, for every point $ \ol x \in \,]\ms L,\, \ms R[\,$  of continuity for~$\omega^T$, there exists 
$\ol y \in \mathcal{C}_0(u^*, \ol x)$, independent on the
initial datum $u_0$ taken in consideration, such that~\eqref{condleq-2}, \eqref{condgeq-2} are verified.
In fact, given any point 
$ \ol x \in \,]\ms L,\, \ms R[\,$  of continuity for~$\omega^T$,
let 
$\{\ol x_n\}_n$ be a sequence of points in $\mathcal{X}(\omega^T)$
such that $\ol x_n\to \ol x$.  Letting $\{\ol y_n\}=\mathcal{C}_0(u^*, \ol x_n)$,
we may assume that, up to a subsequence, $\{\ol y_n\}_n$
converges to some point $\ol y\in\R$.
Since $x\mapsto \mathcal{C}_0(u^*, x)$ has closed graph
by Proposition~\ref{lemma:Cnotempty}
it follows that $\ol y\in \mathcal{C}_0(u^*, \ol x)$.
Hence, applying Theorem~\ref{initialdataid}
we find that, for any $y < 
\min \mathcal{C}_0(u^*,\ol x)$, and for $n$ sufficiently large, there hold
\begin{equation}
\label{ybar12-3}
\int_y^{\ol y_n} u_{0,1}(x) \dif x \leq \int_y^{\ol y_n}u_0^*(x) \dif x , \qquad \int_y^{\ol y_n} u_{0,2}(x) \dif x \leq \int_y^{\ol y_n}u_0^*(x) \dif x\,.
\end{equation}
taking the limit as $n\to\infty$ in~\eqref{ybar12-3} we derive
\begin{equation*}
\int_y^{\ol y} u_{0,1}(x) \dif x \leq \int_y^{\ol y}u_0^*(x) \dif x , \qquad \int_y^{\ol y} u_{0,2}(x) \dif x \leq \int_y^{\ol y}u_0^*(x) \dif x, \quad \forall \; y < \min \mathcal{C}_0(u^*,\ol x)\,,
\end{equation*}
which yields~\eqref{eq:convexity-ineq-2}.
This completes the proof of property (iii), 
and thus concludes the proof of the theorem.

\end{proof}


\subsection{A nonconvex set of initial data}\label{exmp:nonconvex}
We provide here an example of attainable profile 
$\omega^T \in\mc A^{[AB]}(T)$ for which the set $\mathcal{I}_T^{[AB]}(\omega^T)$ is not convex. Let $f\doteq f_l=f_r = u^2/2$, and set
\begin{equation}
\label{eq:AB-def}
    A 
    \doteq 4L_0,\qquad\ \  
    B \doteq -4L_0,
\end{equation}
for some constant $L_0<0$.
By definition~\eqref{eq:bar-AB-def}
we have
\begin{equation}
\label{eq:AB-bar-def}
    \ol A= -4L_0,\qquad 
    \ol B = 4L_0.
    \quad
\end{equation}
Then, consider the profile
\begin{equation}
\label{eq:profile-nonconvex-initialset}
\omega_3(x) = \begin{cases}
\ol A \ & x \leq L_0, \\
A \ & x \in \,]L_0,0[\,, \\
p \ & x >0, \\
\end{cases}
\end{equation}
with 
\begin{equation}
\label{eq:v-def}
    p < 12 L_0\,.
\end{equation}
0bserve that, by definition~\eqref{eq:LR-def}
we have $\ms L= \ms L[\omega_3, f]=L_0$,
and $\ms R= \ms R[\omega_3, f]=0$
since $f'(p)=p< 0$.
Moreover, recall that the quantity
$\bs v\doteq \bs v[L_0,A, f]$
defined in~\cite[\S~3.2]{talamini_ancona_attset}
satisfies 
\begin{equation}
   \label{eq:ex1-constraint-3}
       \ol A>\bs v>A.
   \end{equation}
Then, one can readly verify that $\omega_3$ fulfills the conditions (i)-(ii) of~\cite[Theorem 4.7]{talamini_ancona_attset}.
To simplify the analysis we shall consider 
a time horizon $T=1$.
With this choice, by a direct computation 
one finds that $\bs v=A-2\sqrt{A\,L_0}=0$.
Following the same type of procedure of Remark~\ref{rem:nofinerpart}
we now construct explicitly the 
$AB$-entropy solution $u^*$ defined by
\begin{equation}
    \label{eq:indatum-entr-sol-omega2}
        u_0^* \doteq  \mc S^{[AB]-}_T \omega_3,\qquad\quad  u^*(\cdot,t) \doteq \mc S^{[AB]+}_t u^*_0 \qquad \forall~t \in [0,1]\,.
    \end{equation}
Observe that condition~\eqref{eq:ex1-constraint-3}
ensures the existence in $u^*$ of a shock curve 
parametrized by a map 
$\gamma : [\bs\sigma, 1]\to \,]-\infty, 0]$,  
with $\bs\sigma=-L_0/\,\ol A= 1/4$,
    such that $\gamma(\bs\sigma)=0$,
    $\gamma(1)=L_0$.
The curve $t\to (\gamma(t), t)$, $t\in [1/4, 1]$, is a shock curve for the conservation law
    $u_t+f(u)_x=0$, which connects 
    the left states 
    $(\gamma(t)-L_0)/{t}$
    with the right state~$A$.
    On the left of $\gamma(t)$ there is a rarefaction wave connecting the left state 
    $0$ with the right state $\ol A$,
    and centered at the point $(L_0, 0)$.
    \begin{figure}
\centering
\begin{tikzpicture}
\scriptsize{
\draw[->] (-5,0)--(6,0);
\draw[->] (0,0)--(0,2.5)node[above]{$t$};
\draw[very thin] (-5,2)--(6,2);
\draw (-1,2)node[above]{$L$}--(-5,0)node[below]{$5L$};

\draw (-1,2)--(-1,0);
\draw (0,2)--(4,0)node[below]{$-4L$};
\draw (0,2)--(6,0)node[below]{$-p$};

\draw (-4,1)node{$\ol A$};
\draw (5,1)node{$p$};

\draw[dashed] (-5,0)--(-1,2);
\draw[dashed] (-4.5,0)--(-1,2);
\draw [dashed](-4,0)--(-1,2);
\draw[dashed] (-3.5,0)--(-1,2);
\draw[dashed] (-3,0)--(-1,2);
\draw[dashed] (-2.5,0)--(-1,2);
\draw[dashed] (-2,0)--(-1,2);
\draw[dashed] (-1.5,0)--(-1,2);

\draw[smooth, red] plot coordinates{(-1,2)(-0.1,1)(0,0.5)};
\draw[dashed] (0,0.5)--(-1,0);
\draw[dashed] (-1,0)--(-0.8,0.8);
\draw[dashed] (-1,0)--(-0.6,0.6);
\draw[dashed] (-1,0)--(-0.4,0.5);

\draw (-0.2,0.2)node{\tiny{$\ol A$}};
\draw (-0.3,1.5)node{\tiny{$A$}};
\draw (1,0.7)node{\tiny{$\ol B$}};

\draw[dashed] (0,2)--(4.5,0);
\draw[dashed] (0,2)--(5,0);
\draw[dashed] (0,2)--(5.5,0);

}
\end{tikzpicture}
\caption{The solution produced by the initial datum $u_0^*$.}
\label{u_0*part}
\end{figure}
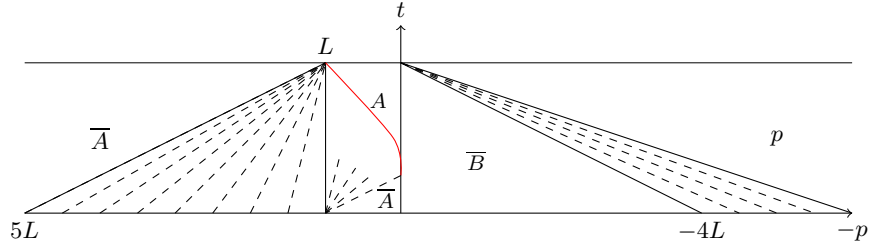
Then, $u^*$ is defined by (see Figure~\ref{u_0*part}) 
    \begin{equation}
    \label{eq:u^*-ex4}
        u^*(x,t)=
        \begin{cases}
            \ \ol A\ \ &\text{if} \quad \ x < L_0-(1-t)\cdot\ol A, \ \ \ t\in [0,1],
            \\
            \noalign{\smallskip}
            \dfrac{L_0-x}{1-t}&\text{if} \quad \ \  L_0-(1-t)\cdot\ol A\leq x \leq  L_0, \ \ \ t\in [0,1[\,,
            \\
            \noalign{\medskip}
            \dfrac{x-L_0}{t}
            \ \ &\text{if}\quad \
            \left\{
            \begin{aligned}
                &L_0<x<\gamma(t), \ \ \ t\in [1/4, 1],
            \\
            \noalign{\smallskip}
            &L_0<x<L_0+t\cdot \ol A,
            \ \ \ t\in \,]0,1/4],
            \end{aligned}
            \right.
            \\
            \noalign{\medskip}
            \ A \ \ &\text{if} \quad\ \ \gamma(t)<x<0, \ \ t\in [1/4, 1],
            \\
            \noalign{\medskip}
            \ \ol A \ \ &\text{if} \quad\  \ L_0+t\cdot \ol A<x<0, \ \ t\in [0,1/4],
            \\
            \noalign{\medskip}
            \ \ol B \ \ &\text{if} \quad\ \ 0<x<(t-1)\cdot \ol B, \ \ t\in [0,1],
            \\
            \noalign{\medskip}
            \dfrac{x}{t-1}
            \ \ &\text{if}\quad \
            (t-1)\cdot \ol B\leq x\leq (t-1)\cdot p,
             \ \ t\in [0,1[\,,
             \\
            \noalign{\medskip}
            \ p  \ \ &\text{if}\quad \ 
            x > (t-1)\cdot p,
             \ \ t\in [0,1]\,,
        \end{cases}
    \end{equation}
and the corresponding initial datum is given by
\begin{equation}
\label{eq:indastum-star-6}
u_0^*(x) = \begin{cases}
\ \ol A & \text{if} \quad x < 5L_0, \\
\noalign{\smallskip}
\ L_0-x & \text{if} \quad 5L_0 < x < L_0, \\
\noalign{\smallskip}
\ \ol A & \text{if} \quad L_0 < x < 0, \\
\noalign{\smallskip}
\ \ol B & \text{if} \quad 0 < x < -4L_0, \\
\noalign{\smallskip}
\ -x & \text{if} \quad -4L_0 < x < -p, \\
\ p & \text{if} \quad -p < x.
\end{cases}
\end{equation}
    Our  goal is to find two initial data $u_{0,1}, u_{0,2} \in \mathcal{I}_T^{[AB]}(\omega_3)$ such that for some $\lambda \in \,]0,1[\,$, 
    we have $\lambda u_{0,1}+ (1-\lambda) u_{0,2} \notin \mathcal{I}_T^{[AB]}(\omega_3)$.
    Then, consider the following two initial data
    (see Figure~\ref{u_01part} and Figure~\ref{u_02part}):
\begin{equation}
\label{eq:indastum-1-6}
u_{0,1}(x) = \begin{cases}
\ \ol A & \text{if} \quad x < L_0, \\
\ A & \text{if} \quad L_0 < x < 0, \\
\ B & \text{if} \quad 0 < x < -\lambda(B,p), \\
\ p & \text{if} \quad -\lambda(B,p) < x,
\end{cases}
\end{equation}
where $\lambda(B,p)=(B+p)/2=(-4L_0+p)/2$ denotes the Rankine-Hugoniot speed of the jump with left state $B$ and right state $p$, 
\begin{equation}
\label{eq:indastum-2-6}
u_{0,2}(x) = \begin{cases}
\ \ol A & \text{if} \quad x < 5L_0, \\
\ L_0-x & \text{if} \quad 5L_0 < x < L_0, \\
\ 2\, \ol A & \text{if} \quad L_0 < x < 0, \\
\ 2\, \ol B & \text{if} \quad 0 < x < -L_0, \\
\ \ol B & \text{if} \quad -L_0 < x < -4L_0, \\
\ -x & \text{if} \quad -4L_0 < x < -p, \\
\ p & \text{if} \quad x>-p.
\end{cases}
\end{equation} 
    \begin{figure}
\centering
\begin{tikzpicture}
\scriptsize{
\draw[->] (-5,0)--(6,0);
\draw[->] (0,0)--(0,2.5)node[above]{$t$};
\draw[very thin] (-5,2)--(6,2);
\draw (-5,0)node[below]{$5L$};

\draw[red] (-1,2)node[above]{\textcolor{black}{$L$}}--(-1,0);
\draw (6,0)node[below]{$-p$};
\draw (5,1) node{$p$};
\draw (-3,1)node{$\ol A$};
\draw (-0.5,1)node{$A$};
\draw (0.5,1)node{$B$};

\draw[red] (0,2)--(4.5,0)node[below]{\textcolor{black}{$-\lambda(B,p)$}};

}
\end{tikzpicture}
\caption{The solution produced by the initial datum $u_0^1$.}
\label{u_01part}
\end{figure}
\begin{figure}
\centering
\begin{tikzpicture}
\scriptsize{
\draw[->] (-5,0)--(6,0);
\draw[->] (0,0)--(0,2.5)node[above]{$t$};
\draw[very thin] (-5,2)--(6,2);
\draw (-1,2)node[above]{$L$}--(-5,0)node[below]{$5L$};

\draw (-1,2)--(-1,0);
\draw (0,2)--(4,0)node[below]{$-4L$};
\draw (0,2)--(6,0)node[below]{$-p$};

\draw (-4,1)node{$\ol A$};
\draw (5,1)node{$p$};

\draw[dashed] (-5,0)--(-1,2);
\draw[dashed] (-4.5,0)--(-1,2);
\draw [dashed](-4,0)--(-1,2);
\draw[dashed] (-3.5,0)--(-1,2);
\draw[dashed] (-3,0)--(-1,2);
\draw[dashed] (-2.5,0)--(-1,2);
\draw[dashed] (-2,0)--(-1,2);
\draw[dashed] (-1.5,0)--(-1,2);

\draw[smooth, red] plot coordinates{(-1,2)(-0.1,1)(0,0.5)};
\draw (0,0.5)--(-1,0);
\draw[dashed] (-1,0)--(-0.8,0.8);
\draw[dashed] (-1,0)--(-0.6,0.6);
\draw[dashed] (-1,0)--(-0.4,0.5);

\draw (-1,0)--(0,0.25);
\draw(1,0)--(0,0.5);
\draw(1,0)--(0,0.25);

\draw (-0.5,0) node[below]{$2 \ol A$};
\draw (0.5,0) node[below]{$2 \ol B$};

\draw (-0.3,1.5)node{\tiny{$A$}};
\draw (1,0.7)node{\tiny{$\ol B$}};

\draw[dashed] (0,2)--(4.5,0);
\draw[dashed] (0,2)--(5,0);
\draw[dashed] (0,2)--(5.5,0);

}
\end{tikzpicture}
\caption{The solution produced by the initial datum $u_0^2$.}
\label{u_02part}
\end{figure}
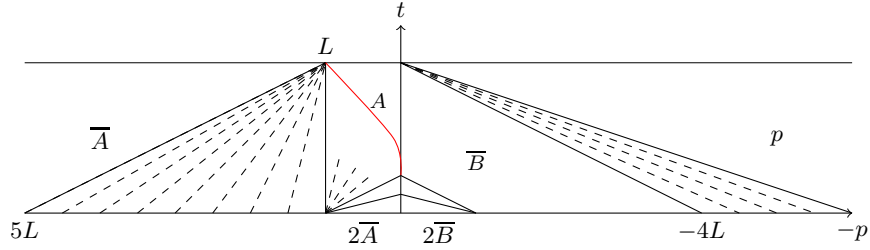

\noindent
With similar arguments as for the construction of $u^*$ above, one can easily see that the $AB$-entropy solutions 
to~\eqref{conslaw},~\eqref{discflux}, with initial
data $u_{0,1}, u_{0,2}$, reach at time $T=1$ the profile~$\omega_3$ in~\eqref{eq:profile-nonconvex-initialset}
(see Figures~\ref{u_01part}, \ref{u_02part}).
Hence, we have $u_{0,i} \in \mathcal{I}_T^{[AB]}(\omega_3)$, $i = 1,2$. 
We will now show that
\begin{equation}
\label{eq:nonconvex-initial-set}
    u^{\lambda}_0 \doteq  \lambda u_{0,1}+ (1-\lambda) u_{0,2}~\notin~\mathcal{I}_T^{[AB]}(\omega_3)
    \qquad\ \forall~\lambda \in \,]0,1[\,.
\end{equation}
Toward this end, we will first show that, if 
$u^{\lambda}_0 \in \mathcal{I}_T^{[AB]}(\omega_3)$
for some $\lambda \in \,]0,1[$,
then there exists $\ol y \in [L_0,-3L_0]$ such that
there holds
\begin{equation}
\label{eq:ineq-utheta-1}
\int_{5L_0}^{\ol y} u_0^{\lambda}(x) \dif x \leq \int_{5L_0}^{\ol y} u_0^*(x)\dif x\,.
\end{equation}
In fact, observe first that with the same analysis
in Remark~\ref{rem:nofinerpart}
we deduce that~\eqref{eq:C0-ex1-1} is verified
also for $u^*$ defined in~\eqref{eq:u^*-ex4}.
Since here we have 
$\bs v=0$, $T=1$, one thus finds that
there holds 
\begin{equation}
    \label{eq:C0-ex1-3}
        \mc C_0(u^*,x)=[\,L_0,\, (x/A-1)\cdot \ol B\,]=
        [\,L_0,\,x-4\,L_0],
        \qquad\forall~x\in\,]\,L_0, 0[\,.
\end{equation}
Then, considering a sequence of points $\ol x_n \downarrow L_0$, and applying Theorem~\ref{initialdataid}
with $u_0^\theta$ in place of $u_0$, we deduce that
for every $n$ there exists $\ol y_n\in \mc C_0(u^*,\ol x_n)=[\,L_0,\,\ol x_n-4\,L_0]$ such that there holds
\begin{equation}
\label{eq:ineq-utheta-3}
\int_{5L_0}^{\ol y_n} u_0^{\lambda}(x) \dif x \leq \int_{5L_0}^{\ol y_n} u_0^*(x)\dif x\,.
\end{equation}
We may assume that, up to a subsequence, $\{\ol y_n\}_n$
converges to some point $\ol y\in [L_0,-3L_0]$.
Then, taking the limit in~\eqref{eq:ineq-utheta-3}
as $n\to\infty$, we derive that~\eqref{eq:ineq-utheta-1} holds for such $\ol y$.

We will now show that, by definitions of $u_0^*$,
$u_{0,i}$, $i=1,2$, in~\eqref{eq:indastum-star-6}, \eqref{eq:indastum-1-6}, \eqref{eq:indastum-2-6},
it follows
\begin{equation}
\label{eq:ineq-utheta-4}
\int_{5L_0}^{\ol y} \big(u_0^{\lambda}(x)- u_0^*(x)\big) \dif x >0, \quad \forall~\ol y \in [L_0,-3L_0]\,,
\end{equation}
which is in contrast with~\eqref{eq:ineq-utheta-1},
thus proving~\eqref{eq:nonconvex-initial-set}
by contradiction.
We distinguish three cases:
%
\smallskip

\noindent {\scshape Case 1.} $\ol y \in [L_0,0]$.
By direct computations we find that
\begin{equation}
\int_{5L_0}^{\ol y} u_{0,1}(x) \dif x = 12L_0^2+4L_0\,\ol y, \qquad \int_{5L_0}^{\ol y} u_{0,2}(x) \dif x  = 16L_0^2-8L_0\, \ol y,
\end{equation}
and 
\begin{equation}
\int_{5L_0}^{\ol y} u_0^*(x) \dif x = 12L_0^2-4L_0\,\ol y\,.
\end{equation}
Thus, for every $\ol y \in [L_0,0]$, we derive
\begin{equation}
\int_{5L_0}^{\ol y} \big(u_0^{\lambda}(x)-u_0^*(x)\big) \dif x = 8\,\lambda L_0\, \ol y+ 8(1-\lambda) L_0(L_0-\ol y) > 0,\qquad\forall~\lambda\in [0,1].
\end{equation}
\smallskip

\noindent {\scshape Case 2.} $\ol y \in [0,-L_0]$.
Observe that, because of~\eqref{eq:v-def}, we have
$\lambda(B,v)>-4L_0$, which implies that $u_{0,1}(x)=B$ for all $x\in\,]0, \ol y]$. Then, by  computations 
as in previous case, for $\ol y \in [0,-L_0]$ we find
\begin{equation}
\int_{5L_0}^{\ol y} u_{0,1}(x) \dif x = 12L_0^2-4L_0\,\ol y, \qquad \int_{5L_0}^{\ol y} u_{0,2}(x) \dif x  = 16L_0^2+8L_0\, \ol y,
\end{equation}
and 
\begin{equation}
\int_{5L_0}^{\ol y} u_0^*(x) \dif x = 12L_0^2 +4L_0\,\ol y\,.
\end{equation}
Thus, for every $\ol y \in [0,-L_0]$, we derive
\begin{equation}
\int_{5L_0}^{\ol y} \big(u_0^{\lambda}(x)-u_0^*(x)\big) \dif x = -8\,\lambda L_0\, \ol y+ 4(1-\lambda) L_0(L_0+\ol y) > 0,\qquad\forall~\lambda\in [0,1].
\end{equation}
\smallskip

\noindent {\scshape Case 3.} $\ol y \in [-L_0,-3L_0]$.
Note that, as in Case 2, we have $u_{0,1}(x)=B$ for all $x\in\,]0, \ol y]$. Then, by  computations 
as in previous cases, for $\ol y \in [-L_0,-3L_0]$ we find
\begin{equation}
\int_{5L_0}^{\ol y} u_{0,1}(x) \dif x = 12L_0^2-4L_0\,\ol y, \qquad \int_{5L_0}^{\ol y} u_{0,2}(x) \dif x  = 12L_0^2+4L_0\, \ol y,
\end{equation}
and 
\begin{equation}
\int_{5L_0}^{\ol y} u_0^*(x) \dif x = 12L_0^2 +4L_0\,\ol y\,.
\end{equation}
Hence, for every $\ol y \in [-L_0, -3L_0]$, we derive
\begin{equation}
\int_{5L_0}^{\ol y} \big(u_0^{\lambda}(x)-u_0^*(x)\big) \dif x = -8\,\lambda L_0\, \ol y
> 0,\qquad\forall~\lambda\in [0,1].
\end{equation}
The analysis of all three cases shows that~\eqref{eq:ineq-utheta-4} is verified, and thus concludes the proof that $\mathcal{I}_T^{[AB]}(\omega_3)$ is not convex since~\eqref{eq:nonconvex-initial-set} holds.

\bibliographystyle{plain}
\bibliography{references.bib}

\end{document}